\providecommand{\tabularnewline}{\\}
\numberwithin{equation}{section}
\numberwithin{figure}{section}
\theoremstyle{plain}
\newtheorem{thm}{\protect\theoremname}
\theoremstyle{definition}
\newtheorem{defn}[thm]{\protect\definitionname}
\theoremstyle{plain}
\newtheorem{lem}[thm]{\protect\lemmaname}
\theoremstyle{plain}
\newtheorem{prop}[thm]{\protect\propositionname}
\theoremstyle{plain}
\newtheorem{cor}[thm]{\protect\corollaryname}
\setlist[itemize]{leftmargin=10pt}
\definecolor{lightgray}{rgb}{0.9,0.9,0.9}
\definecolor{lightred}{rgb}{1,0.8,0.8}
\definecolor{lightgreen}{rgb}{0.6,1,0.6}
\definecolor{lightyellow}{rgb}{1,1,0.5}
\definecolor{lightgrey}{rgb}{0.8,0.8,0.8}
\providecommand{\corollaryname}{Corollary}
\providecommand{\definitionname}{Definition}
\providecommand{\lemmaname}{Lemma}
\providecommand{\propositionname}{Proposition}
\providecommand{\theoremname}{Theorem}
\begin{document}
\title[\resizebox{4.3in}{!}{A Characterization of Entropy as a Universal Monoidal Natural Transformation}]{A Characterization of Entropy as a Universal Monoidal Natural Transformation}
\author{Cheuk Ting Li}
\address{Department of Information Engineering, The Chinese University of Hong
Kong, Hong Kong SAR of China}
\email{ctli@ie.cuhk.edu.hk}
\begin{abstract}
We show that the essential properties of entropy (monotonicity, additivity
and subadditivity) are consequences of entropy being a monoidal natural
transformation from the under category functor $-/\mathsf{LProb}_{\rho}$
(where $\mathsf{LProb}_{\rho}$ is category of $\rho$-th-power-summable
probability distributions, $0<\rho<1$) to $\Delta_{\mathbb{R}}$.
Moreover, the Shannon entropy can be characterized as the universal
monoidal natural transformation from $-/\mathsf{LProb}_{\rho}$ to
the category of integrally closed partially ordered abelian groups
(a reflective subcategory of the lax-slice 2-category over $\mathsf{MonCat}_{\ell}$
in the 2-category of monoidal categories), providing a succinct characterization
of Shannon entropy as a reflection arrow. We can likewise define entropy
for every monoidal category with a monoidal structure on its under
categories (e.g. the category of finite abelian groups, the category
of finite inhabited sets, the category of finite dimensional vector
spaces, and the augmented simplex category) via the reflection arrow.
This implies that all these entropies over different categories are
components of a single natural transformation (the unit of the idempotent
monad), allowing us to connect these entropies in a natural manner.
We also provide a universal characterization of the conditional Shannon
entropy based on the chain rule which, unlike the characterization
of information loss by Baez, Fritz and Leinster, does not require
any continuity assumption.

\end{abstract}

\maketitle
\medskip{}

\tableofcontents{}

\medskip{}

\section{Introduction}

Consider the category $\mathsf{FinProb}$ of finite probability spaces,
with morphisms given by measure-preserving functions between probability
spaces, and tensor product given by the product probability space
\cite{franz2002stochastic,baez2011characterization,baez2011entropy}.
For a probability space $P\in\mathsf{FinProb}$, the under category
$P/\mathsf{FinProb}$, consisting of measure-preserving functions
from $P$ to another probability space, can be regarded as the ``category
of random variables defined over $P$'', with a categorical product
that corresponds to the joint random variable of two random variables
\cite{baez2011entropy}. It was observed by Baez, Fritz and Leinster
\cite{baez2011entropy} that the Shannon entropy $H_{1}:\mathsf{FinProb}\to[0,\infty)$
can be regarded as a strong monoidal functor (where $[0,\infty)$
is the monoidal posetal category, where there is a morphism $a\to b$
for $a,b\in[0,\infty)$ if $a\ge b$, with tensor product given by
addition), which corresponds to the \emph{monotonicity property} of
entropy: 
\begin{equation}
H_{1}(X)\ge H_{1}(Y)\label{eq:monotone}
\end{equation}
for discrete random variables $X,Y$ where $Y$ is a function of $X$,
due to $H_{1}$ being a functor; and the \emph{additivity property}
of entropy: 
\begin{equation}
H_{1}(X)+H_{1}(Y)=H_{1}(X,Y)\label{eq:additive}
\end{equation}
for any independent discrete random variables $X,Y$, due to $H_{1}$
being strongly monoidal. Also, the functor mapping a random variable
$x\in P/\mathsf{FinProb}$ to its Shannon entropy is a lax monoidal
functor \cite{baez2011entropy}, which corresponds to the \emph{subadditivity
property} of entropy: 
\begin{equation}
H_{1}(X)+H_{1}(Y)\ge H_{1}(X,Y)\label{eq:subadditive}
\end{equation}
for any jointly-distributed discrete random variables $X,Y$. This
way, three important properties of entropy can be stated as properties
of strong/lax monoidal functors. Note that these properties are also
satisfied by the Hartley entropy \cite{hartley1928transmission} $H_{0}(X)=\log|\mathrm{supp}(X)|$,
where $\mathrm{supp}(X)$ is the support set of $X$.

The purpose of this paper is to demonstrate that these three properties
are consequences of the fact that the natural transformation $h:-/\mathsf{FinProb}\Rightarrow\Delta_{\mathbb{R}}$
(where $-/\mathsf{FinProb}:\mathsf{FinProb}^{\mathrm{op}}\to\mathsf{MonCat}_{\ell}$
maps $P\in\mathsf{FinProb}$ to $P/\mathsf{FinProb}\in\mathsf{MonCat}_{\ell}$,
and $\Delta_{\mathbb{R}}:\mathsf{FinProb}^{\mathrm{op}}\to\mathsf{MonCat}_{\ell}$
is the constant functor mapping everything to the monoidal posetal
category $\mathbb{R}\in\mathsf{MonCat}_{\ell}$), with components
$h_{P}(X)=H_{1}(X)$ for random variable $X:P\to Q$, is a monoidal
natural transformation (here we assume $\Delta_{\mathbb{R}}$ is a
lax monoidal functor with coherence map given by the tensor product
of $\mathbb{R}$, i.e., addition over real numbers). We have the following
correspondence between the properties of entropy and the properties
of $h$ being a monoidal natural transformation:
\begin{align*}
\text{Monotonicity}\; & \leftrightarrow\;\text{\ensuremath{h_{P}} is a functor},\\
\text{Subadditivity}\; & \leftrightarrow\;\text{\ensuremath{h_{P}} is lax monoidal},\\
\text{Additivity}\; & \leftrightarrow\;\text{\ensuremath{h} is a monoidal natural transformation}.
\end{align*}

In this paper, we prove the following succinct characterization of
the pairing of the Hartley entropy and the Shannon entropy, given
by $\tilde{h}:-/\mathsf{FinProb}\Rightarrow\Delta_{(\log\mathbb{Q}_{>0})\times\mathbb{R}}$
with components $\tilde{h}_{P}(X)=(H_{0}(X),H_{1}(X))$. This can
be considered as an abstraction and generalization of the characterizations
by Acz{\'e}l, Forte and Ng \cite{aczel1974shannon}. Write $\mathsf{IcOrdAb}$
for the category of integrally closed partially ordered (i.c.p.o.)
abelian group \cite{clifford1940partially,glass1999partially}.\footnote{A partially ordered abelian group is \emph{integrally closed} if $nx\le y$
for all $n\in\mathbb{Z}_{>0}$ implies $x\le0$ \cite{clifford1940partially,glass1999partially}.} We have:

\medskip{}

\textit{The pairing of the Hartley entropy and the Shannon entropy
$\tilde{h}$ is the universal monoidal natural transformation in the
form $\gamma:\,-/\mathsf{FinProb}\Rightarrow\Delta_{\mathsf{W}}:\,\mathsf{FinProb}^{\mathrm{op}}\to\mathsf{MonCat}_{\ell}$
where $\mathsf{W}\in\mathsf{IcOrdAb}$.}\footnote{More explicitly, for every such $\gamma$, there exists a unique morphism
$F:(\log\mathbb{Q}_{>0})\times\mathbb{R}\to\mathsf{W}$ in $\mathsf{IcOrdAb}$
such that $\gamma=\Delta_{F}\tilde{h}$.}\textit{}

\medskip{}

Furthermore, if we consider $\mathsf{LProb}_{\rho}$ ($0<\rho<1$),
the category of $\rho$-th-power-summable discrete probability distributions
(containing probability mass functions $P$ over finite or countable
sets satisfying $\sum_{x}(P(x))^{\rho}<\infty$) instead of $\mathsf{FinProb}$,
then the Hartley entropy is no longer defined, and the Shannon entropy
alone is the universal monoidal natural transformation. This provides
a succinct characterization of Shannon entropy:

\medskip{}

\textit{The Shannon entropy $h$ is the universal monoidal natural
transformation in the form $\gamma:\,-/\mathsf{LProb}_{\rho}\Rightarrow\Delta_{\mathsf{W}}:\,\mathsf{LProb}_{\rho}^{\mathrm{op}}\to\mathsf{MonCat}_{\ell}$
where $\mathsf{W}\in\mathsf{IcOrdAb}$.}

\medskip{}

Intuitively, Shannon entropy is the universal ``monoidal cocone''
from $-/\mathsf{LProb}_{\rho}$ to an i.c.p.o. abelian group. Unlike
previous characterizations of Shannon entropy \cite{shannon1948mathematical,faddeev1956concept,chaundy1960functional,aczel1974shannon,aczel1975measures,baez2011characterization,muller2016generalization},
this characterization does not involve $\mathbb{R}$ outside of the
definition of $\mathsf{LProb}_{\rho}$. Even though this characterization
does not require $\mathsf{W}$ to be $\mathbb{R}$ or linearly ordered,
the structure of $\mathsf{LProb}_{\rho}$ ensures that every monoidal
natural transformation $\gamma:\,-/\mathsf{LProb}_{\rho}\Rightarrow\Delta_{\mathsf{W}}$
must order all probability distributions linearly according to their
entropies, and the structure of $\mathbb{R}$ is ``extracted'' from
$\mathsf{LProb}_{\rho}$. 

A consequence is that the Shannon entropy $H_{1}$ is the universal
function in the form $H:\mathsf{LProb}_{\rho}\to\mathsf{W}$, where
$\mathsf{W}$ is an i.c.p.o. abelian group, satisfying \eqref{eq:monotone},
\eqref{eq:additive} and \eqref{eq:subadditive}.\footnote{Universality means that every such $H:\mathsf{LProb}_{\rho}\to\mathsf{W}$
can be factorized uniquely as $H=fH_{1}$ where $f:\mathbb{R}\to\mathsf{W}$
is an order-preserving group homomorphism.}

Moreover, the characterization of entropy can be applied not only
to $\mathsf{FinProb}$ and $\mathsf{LProb}_{\rho}$, but also to any
monoidal category with a monoidal structure over its under categories,
generalizing the concept of entropy to all these categories. Define
\emph{the category of monoidal strictly-indexed monoidal (MonSiMon)
categories}, written as $\mathsf{MonSiMonCat}$, as the lax-slice
2-category over $\mathsf{MonCat}_{\ell}$ in the 2-category of monoidal
categories. Each object of $\mathsf{MonSiMonCat}$ is a monoidal category
$\mathsf{C}$, where each object $P$ is associated with a monoidal
category $N(P)$ via a lax monoidal functor $N:\mathsf{C}^{\mathrm{op}}\to\mathsf{MonCat}_{\ell}$.
Examples include $\mathsf{FinProb}$ (where each $P\in\mathsf{FinProb}$
is associated with the under category $N(P)=P/\mathsf{FinProb}$),
$\mathsf{LProb}_{\rho}$, the category of finite abelian groups, the
category of finite sets, the category of finite dimensional vector
spaces, the category of Gaussian distributions, and the augmented
simplex category. 

We can define entropies over the aforementioned MonSiMon categories,
i.e., those categories are the domains of the entropy functors. The
codomains of the entropy functors (e.g., $\mathbb{R}$ for Shannon
entropy) are MonSiMon categories as well. The entropy functors are
morphisms in $\mathsf{MonSiMonCat}$. Moreover, in this paper, the
entropy can be given as the universal reflection morphism from $\mathsf{FinProb}$
(or another MonSiMon category $\mathsf{C}$ under consideration) to
the reflective subcategory of allowed codomains in $\mathsf{MonSiMonCat}$
(e.g., cancellative commutative monoids, i.c.p.o. abelian groups,
i.c.p.o. vector spaces). For example, the reflection morphism from
$-/\mathsf{LProb}_{\rho}$ to i.c.p.o. abelian groups is the Shannon
entropy, whereas the reflection morphism from $-/\mathsf{FinProb}$
to i.c.p.o. abelian groups is the pairing  of the Shannon and Hartley
entropies.

Furthermore, the universal entropy of $\mathsf{C}\in\mathsf{MonSiMonCat}$
can be given as the component $\phi_{\mathsf{C}}:\mathsf{C}\to T(\mathsf{C})$,
where $\phi:\mathrm{id}_{\mathsf{MonSiMonCat}}\Rightarrow T$ is the
unit of the idempotent monad $T:\mathsf{MonSiMonCat}\to\mathsf{MonSiMonCat}$
sending a MonSiMon category to its reflection in the reflective subcategory
of allowed codomains, which we call the \emph{universal entropy monad}.
The natural transformation $\phi$ allows us to connect the entropies
of various categories  as components of the same natural transformation.
For example, passing the forgetful functor $\mathsf{FinProb}\to\mathrm{Epi}(\mathsf{FinISet})$
(where $\mathrm{Epi}(\mathsf{FinISet})$ is the category of finite
inhabited sets with surjective functions as morphisms, and the forgetful
functor maps a distribution to its support) to the monad $T$ gives
the functor $\pi_{1}:(\log\mathbb{Q}_{>0})\times\mathbb{R}\to\log\mathbb{Q}_{>0}$
that only retains the Hartley entropy $H_{0}(P)$ among the pair $(H_{0}(P),H_{1}(P))$,
corresponding to the fact that forgetting the probabilities will make
us lose the ability to calculate the Shannon entropy. On the other
hand, passing the inclusion functor $\mathsf{FinProb}\to\mathsf{LProb}_{\rho}$
to the monad $T$ will give $\pi_{2}:(\log\mathbb{Q}_{>0})\times\mathbb{R}\to\mathbb{R}$
that loses the Hartley entropy since it cannot be defined for distributions
with infinite support, and only retain the Shannon entropy. We will
also discuss some insights provided by the naturality of $\phi$,
such as whether the differential entropy or the information dimension
\cite{renyi1959dimension} is a ``nicer'' notion of entropy for
Gaussian random variables, and whether information can be treated
as commodities and vice versa. See Figure \ref{fig:reflect} for an
illustration.

\begin{figure}
\begin{centering}
\includegraphics[scale=1.1]{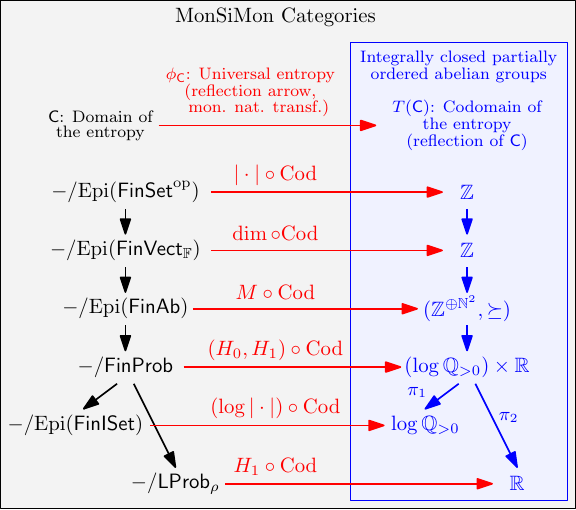}
\par\end{centering}
\caption{\label{fig:reflect}The universal entropy of a MonSiMon category $\mathsf{C}\in\mathsf{MonSiMonCat}$
is the MonSiMon functor $\phi_{\mathsf{C}}:\mathsf{C}\to T(\mathsf{C})$
from $\mathsf{C}$ to its reflection $T(\mathsf{C})$ in the reflective
subcategory of integrally closed partially ordered (i.c.p.o.) abelian
groups, where $T:\mathsf{MonSiMonCat}\to\mathsf{MonSiMonCat}$ is
the idempotent monad (the \emph{universal entropy monad}) sending
a category to its reflection, and $\phi:\mathrm{id}_{\mathsf{MonSiMonCat}}\Rightarrow T$
is the monad unit. We will see that the component $\phi_{\mathsf{C}}$
is basically a monoidal natural transformation to a constant functor.
We can then use the natural transformation $\phi$ to naturally connect
the entropies over various domains, e.g., the (epimorphism wide subcategories
of) the opposite category of $\mathsf{FinSet}$ (with entropy given
by cardinality), the category of finite-dimensional vector spaces
over finite field $\mathbb{F}$ (entropy given by dimension), the
category of finite abelian group (entropy given in Section \ref{subsec:ab}),
the category of finite probability spaces (entropy given by the pairing
of the Hartley entropy $H_{0}$ and the Shannon entropy $H_{1}$),
the category of finite inhabited sets (entropy given by log cardinality)
and the category of $\rho$-th-power-summable discrete probability
spaces ($0<\rho<1$, entropy given by Shannon entropy). Refer to Section
\ref{sec:global} for the descriptions of all the arrows (functors)
in the above diagram (which is a commutative diagram in $\mathsf{MonSiMonCat}$).}
\end{figure}

We also provide a universal characterization of the conditional Shannon
entropy based on the chain rule
\[
H_{1}(X|Z)=H_{1}(X|Y)+H_{1}(Y|Z),
\]
for discrete random variables $X,Y,Z$ where $Z$ is a function of
$Y$, which in turn is a function of $X$, via a construction called
\emph{discretely-indexed monoidal strictly-indexed monoidal category}.
The conditional Shannon entropy can be characterized as a universal
functor  to an i.c.p.o. abelian group. In contrast, for the unconditional
case, the universal functor from $\mathsf{FinProb}$ is the pairing
of the Shannon and Hartley entropies. This provides an alternative
characterization of conditional Shannon entropy or information loss
\cite{baez2011characterization}. Unlike \cite{baez2011characterization},
here the characterization does not require the continuity axiom in
\cite{baez2011characterization}. Note that the continuity axiom is
not stated purely in terms of the category $\mathsf{FinProb}$, but
requires endowing a topology on $\mathsf{FinProb}$. Here we show
that such topological structure is unnecessary for the characterization
of Shannon entropy. Shannon entropy arises naturally from the structure
of $\mathsf{FinProb}$ itself.

\medskip{}

\subsection{Previous Works}

Since the first axiomatic characterization of entropy by Shannon \cite{shannon1948mathematical},
many other axiomatic characterizations of entropy have been proposed
(e.g., \cite{faddeev1956concept,chaundy1960functional,aczel1975measures,muller2016generalization}).
Interested readers are referred to \cite{csiszar2008axiomatic} for
a comprehensive survey. An elegant characterization was given by Acz{\'e}l,
Forte and Ng \cite{aczel1974shannon}, who showed that any function
mapping finite probability distributions to nonnegative real numbers
that satisfies the additivity property \eqref{eq:additive} and the
subadditivity property \eqref{eq:subadditive}, as well as being invariant
under permutation of labels and the addition of a zero mass,\footnote{This means that the entropy of the probability mass function $P:A\to[0,1]$
is the same as the entropy of the probability mass function $Q:B\to[0,1]$
if there is an injective function $f:A\to B$ satisfying that $Q$
is the pushforward measure of $P$ along $f$, or equivalently, $Q(f(x))=P(x)$
for all $x\in A$.} must be a nonnegative linear combination of the Shannon and Hartley
entropies.  Note that \cite{aczel1974shannon} also showed that if
we impose an additional requirement that the function must be continuous,
then it must be a nonnegative multiple of the Shannon entropy. In
comparison, both characterizations of Shannon entropy in this paper
(the entropy of $\mathsf{LProb}_{\rho}$ in Section \ref{sec:lrho},
and the conditional entropy of $\mathsf{FinProb}$ or $\mathsf{LProb}_{\rho}$
in Section \ref{sec:conditional}) do not require continuity. We show
that monotonicity \eqref{eq:monotone}, additivity \eqref{eq:additive}
and subadditivity \eqref{eq:subadditive} (as well as being invariant
under the addition of a zero mass) are sufficient to characterize
Shannon entropy (up to a multiplicative constant), if we consider
$\mathsf{LProb}_{\rho}$ instead of $\mathsf{FinProb}$. The work
by M\"{u}ller and Pastena \cite{muller2016generalization} relates
the Shannon and Hartley entropies with the concept of majorization
among discrete distributions \cite{marshall1979inequalities}, which
will be an important tool in the proofs in this paper.

The categorical treatment of probability theory dates back to Lawvere
\cite{lawvere1962category} and Giry \cite{giry1982categorical}.
It is natural to study whether entropy can be characterized in a category-theoretic
manner. In \cite{baez2011characterization}, the conditional Shannon
entropy, which sends the morphism $f:P\to Q$ in $\mathsf{FinProb}$
to the conditional Shannon entropy $H_{1}(P)-H_{1}(Q)$, is characterized
as the only function $F$ (up to multiplication by a nonnegative number)
sending morphisms in $\mathsf{FinProb}$ to $[0,\infty)$ that obeys:
\begin{itemize}
\item (functoriality / chain rule) $F(gf)=F(f)+F(g)$ for morphisms $P\stackrel{f}{\to}Q\stackrel{g}{\to}R$; 
\item (convex linearity) $F(\lambda f\oplus(1-\lambda)g)=\lambda F(f)+(1-\lambda)F(g)$
for $\lambda\in[0,1]$, $f:P_{1}\to Q_{1}$, $g:P_{2}\to Q_{2}$,
where $\oplus$ denotes the convex mixture of measure-preserving functions
\cite{baez2011characterization}; and 
\item (continuity) $F(f)$ (where $f:P\to Q$) is continuous with respect
to the finite probability measure $P$. 
\end{itemize}
This characterization, while being simple to state, requires a nontrivial
amount of work to be translated into purely category-theoretic language.
 The convex linearity axiom requires a ``convex combination''\footnote{This functor is not a convex combination in the usual sense since
$\lambda P\oplus(1-\lambda)P$ is not isomorphic to $P$ for $P\in\mathsf{FinProb}$.} functor $\lambda(-)\oplus(1-\lambda)(-):\,\mathsf{FinProb}\times\mathsf{FinProb}\to\mathsf{FinProb}$
for every $\lambda\in[0,1]$ (which can be defined using operads \cite{leinster2017categorical}),
and the continuity axiom requires endowing a topology on $\mathsf{FinProb}$.
In comparison, the characterization of conditional Shannon entropy
in this paper (Section \ref{sec:conditional}) only requires the properties
of the over categories $\mathsf{FinProb}/Q$ and their under categories
$f/(\mathsf{FinProb}/Q)$, and is based on familiar notions in category
theory (over/under categories and monoidal categories). 

For other related works, the characterization in \cite{baez2011characterization}
was extended to stochastic maps in \cite{fullwood2021information}.
A categorical characterization of the relative entropy was given in
\cite{baez2014bayesian}. It was extended to the relative entropy
on standard Borel spaces in \cite{gagne2018categorical}. Characterizations
of entropy via operads were studied in \cite{leinster2021entropy,bradley2021entropy}.
Entropy over Markov categories \cite{fritz2020synthetic} has been
studied in \cite{perrone2022markov}.

\begin{table}[H]
\begin{centering}
\makebox[\textwidth][c]{{\renewcommand*{\arraystretch}{1.38}%
\begin{tabular}{ccc}
\textbf{Category} & \textbf{Objects} & \textbf{Morphisms}\tabularnewline
\hline 
$*\;\;\lefteqn{\text{(Terminal category)}}$  & $\;\;$One object $\bullet$ & Identity $\mathrm{id}_{\bullet}$\tabularnewline
\hline 
$\Delta_{+}\;\lefteqn{\text{(Aug. simplex cat.)}}$ & $\;\;$$[n]$ for $n\ge-1$ & Order-preserving maps\tabularnewline
\hline 
$\mathsf{FinISet}$ & Finite inhabited (nonempty) sets & Functions\tabularnewline
\hline 
$\mathsf{FinVect}_{\mathbb{F}}$ & Finite-dimensional vector spaces over the field $\mathbb{F}$ & Linear maps\tabularnewline
\hline 
$\mathsf{FinAb}$ & Finite abelian groups & Homomorphisms\tabularnewline
\hline 
$\mathsf{Mon}$ & Monoids & Homomorphisms\tabularnewline
\hline 
$\mathsf{Mon}(\mathsf{C})$ & Monoids in category $\mathsf{C}$ & $\!\!\!$Morphisms respecting monoids$\!\!\!$\tabularnewline
\hline 
$\mathsf{OrdMon}$ & Ordered monoids (as monoidal posetal categories) & Order-preserving hom.\tabularnewline
\hline 
$\mathsf{OrdCMon}$ & Ordered commutative monoids (as mon. pos. cat.) & Order-preserving hom.\tabularnewline
\hline 
$\mathsf{COrdCMon}$ & Cancellative ord. comm. monoids (Sec. \ref{sec:subcats}) & Order-preserving hom.\tabularnewline
\hline 
$\mathsf{OrdAb}$ & Ordered abelian groups (Sec. \ref{sec:subcats}) & Order-preserving hom.\tabularnewline
\hline 
$\mathsf{OrdVect}_{\mathbb{Q}}$ & Ordered vector spaces over $\mathbb{Q}$ (Sec. \ref{sec:subcats}) & Order-preserving linear maps\tabularnewline
\hline 
$\mathsf{IcOrdCMon}$ & $\!\!\!\!$Integrally closed ord. comm. monoid (Sec. \ref{sec:subcats})$\!\!\!\!$ & Order-preserving hom.\tabularnewline
\hline 
$\mathsf{IcOrdAb}$ & $\!\!\!\!$Integrally closed ord. abelian group (Sec. \ref{sec:subcats})$\!\!\!\!$ & Order-preserving hom.\tabularnewline
\hline 
$\mathsf{IcOrdVect}_{\mathbb{Q}}$ & Integrally closed ord. vect. spaces (Sec. \ref{sec:subcats}) & Order-preserving linear maps\tabularnewline
\hline 
$\mathsf{MonCat}_{\ell}$ & Small monoidal categories & Lax monoidal functors\tabularnewline
\hline 
$\mathsf{MonCat}_{s}$ & Small monoidal categories & Strong monoidal functors\tabularnewline
\hline 
$\mathsf{FinProb}$ & Finite probability spaces & Measure-preserving maps\tabularnewline
\hline 
$\mathsf{LProb}_{\rho}$ & $\!\!\!\!\!$$\rho$-th-power-summable prob. mass functions (Sec.
\ref{sec:lrho})$\!\!\!\!\!$ & Measure-preserving maps\tabularnewline
\hline 
$\mathsf{HProb}$ & Finite entropy prob. mass functions (Sec. \ref{sec:lrho}) & Measure-preserving maps\tabularnewline
\hline 
$\mathsf{MonSiMonCat}$ & $\!\!\!\!$Monoidal strictly-indexed monoidal cat. (Sec. \ref{sec:gcmcat})$\!\!\!\!$ & MonSiMon functors\tabularnewline
\hline 
$\!\!\!\mathsf{DisiMonSiMonCat}\!\!\!\!$ & DisiMonSiMon categories (Sec. \ref{sec:conditional}) & DisiMonSiMon functors\tabularnewline
\hline 
\end{tabular}}}
\par\end{centering}
\medskip{}

\medskip{}

\caption{Table of categories used in this paper.}
\end{table}

\begin{table}[H]
\begin{centering}
{\renewcommand*{\arraystretch}{1.38}%
\begin{tabular}{cc}
\textbf{Symbol} & \textbf{Description}\tabularnewline
\hline 
$\mathbb{N}$ & Set of positive integers $\{1,2,\ldots\}$\tabularnewline
\hline 
$\mathrm{hom}_{\mathsf{C}}(A,B)$ & Hom-set containing morphisms in the form $A\to B$ in $\mathsf{C}$\tabularnewline
\hline 
$!$ & Unique morphism to a terminal object\tabularnewline
\hline 
$\mathrm{Epi}(\mathsf{C})$ & The wide subcategory of $\mathsf{C}$ containing the epimorphisms
of $\mathsf{C}$\tabularnewline
\hline 
$\mathrm{Dis}$ & Comonad $\mathsf{Cat}\to\mathsf{Cat}$ sending a category to its discrete
subcategory\tabularnewline
\hline 
$\Delta_{X}$ & The constant functor that maps any object to $X$\tabularnewline
\hline 
$H_{0}(P)$ & Hartley entropy $H_{0}(P)=\log|\{x:\,P(x)>0\}|$\tabularnewline
\hline 
$H_{1}(P)$ & Shannon entropy $H_{1}(P)=-\sum_{x}P(x)\log P(x)$\tabularnewline
\hline 
\end{tabular}}
\par\end{centering}
\medskip{}

\medskip{}

\caption{Table of notations.}
\end{table}

\section{Preliminaries\label{sec:pre}}

We first review the basic properties of $\mathsf{FinProb}$ in \cite{franz2002stochastic,baez2011entropy}.
The category $\mathsf{FinProb}$ has finite probability spaces, or
equivalently, strictly positive probability mass functions (i.e.,
$P:\mathrm{supp}(P)\to[0,1]$ with $P(x)>0$ for $x\in\mathrm{supp}(P)$,
and $\sum_{x}P(x)=1$)\footnote{We note that \cite{baez2011characterization,baez2011entropy} does
not require $P$ to be strictly positive. This is a minor difference
since we can always ignore elements in a probability space with zero
probability, and would not affect the universal entropy of $\mathsf{FinProb}$
derived in Theorem \ref{thm:finprob_ic}. A nice consequence of requiring
$P$ to be strictly positive is that all morphisms in $\mathsf{FinProb}$
are epimorphisms. Also, an issue with allowing non-strictly-positive
probability measures is that two almost surely equal measure-preserving
functions might not be considered identical.} as objects, and a morphism from the probability space $P$ to $Q$
is a measure-preserving function (i.e., $f:\mathrm{supp}(P)\to\mathrm{supp}(Q)$
with $Q(y)=\sum_{x\in f^{-1}(y)}P(x)$) \cite{franz2002stochastic,baez2011entropy,baez2011characterization}.
It is a monoidal category with tensor product given by the product
distribution, i.e., $P\otimes Q$ is given by $R:\mathrm{supp}(P)\times\mathrm{supp}(Q)\to[0,1]$,
$R(x,y)=P(x)Q(y)$, and the tensor unit is the degenerate probability
space where $\mathrm{supp}(P)$ is a singleton.

For $P\in\mathsf{FinProb}$, the under category $P/\mathsf{FinProb}$,
containing objects that are morphisms in $\mathsf{FinProb}$ in the
form $f:P\to Q$, can be regarded as the category of random variables
defined over the probability space $P$. It was noted in \cite{baez2011entropy}
that $P/\mathsf{FinProb}$ has a categorical product, where the product
$f_{1}\times_{P}f_{2}$ between $f_{1}:P\to Q_{1}$ and $f_{2}:P\to Q_{2}$
is given by $f:P\to Q$ where $\mathrm{supp}(Q)=\mathrm{im}((f_{1},f_{2}))$
(the image of the pairing $(f_{1},f_{2}):P\to Q_{1}\times Q_{2}$),
$Q(y_{1},y_{2})=\sum_{x\in(f_{1},f_{2})^{-1}(y_{1},y_{2})}P(x)$,
and $f(x)=(f_{1}(x),f_{2}(x))$, which represents the joint random
variable of $f_{1}$ and $f_{2}$.

Consider the monoidal posetal category $[0,\infty)$, where there
is a morphism $X\to Y$ for $X,Y\in[0,\infty)$ if $X\ge Y$, with
tensor product given by addition. Consider the Shannon entropy $H_{1}:\mathsf{FinProb}\to[0,\infty)$
given by \cite{shannon1948mathematical}
\[
H_{1}(P)=-\sum_{x\in\mathrm{supp}(P)}P(x)\log P(x).
\]
It is a functor since a measure-preserving function maps a finer probability
space to a coarser probability space, which must have a smaller (or
equal) entropy. It is strongly monoidal ($H_{1}(P\otimes Q)=H_{1}(P)+H_{2}(Q)$)
due to the additive property of entropy \eqref{eq:additive} \cite{baez2011entropy}.
Consider the under category $P/\mathsf{FinProb}$ equipped with the
codomain functor $\mathrm{Cod}_{P}:P/\mathsf{FinProb}\to\mathsf{FinProb}$
mapping $f:P\to Q$ to $Q$. The composition $H_{1}\mathrm{Cod}_{P}:P/\mathsf{FinProb}\to[0,\infty)$
is a lax monoidal functor (with respect to the categorical product
over $P/\mathsf{FinProb}$) \cite{baez2011entropy}, since we have
\[
H_{1}\big(\mathrm{Cod}_{P}(f)\big)+H_{1}\big(\mathrm{Cod}_{P}(g)\big)\ge H_{1}\big(\mathrm{Cod}_{P}(f\times_{P}g)\big)
\]
due to the subadditive property of entropy \eqref{eq:subadditive}.

In this paper, unlike many previous approaches, we define entropy
over random variables instead of probability distributions. We will
see later that this leads to a simple characterization of entropy.

\medskip{}

\section{Monoidal Strictly-Indexed Monoidal Categories\label{sec:gcmcat}}

We have seen in the previous section that $\mathsf{FinProb}$ is not
only a monoidal category, but its under categories $P/\mathsf{FinProb}$
are also monoidal. We can have an ``under category functor'' $-/\mathsf{FinProb}:\,\mathsf{FinProb}^{\mathrm{op}}\to\mathsf{MonCat}_{\ell}$,
which maps the morphism $g:P\to Q$ to the precomposition functor
$Q/\mathsf{FinProb}\to P/\mathsf{FinProb}$, $(Q\stackrel{f}{\to}R)\mapsto(P\stackrel{fg}{\to}R)$,
which is a (strong) monoidal functor corresponding to an ``extension
of probability space'', i.e., it maps a random variable $f:Q\to R$
over the probability space $Q$ to the ``same'' random variable
$fg:P\to R$ over a finer probability space $P$. Therefore, $-/\mathsf{FinProb}$
can be treated as an indexed monoidal category (recall that an $\mathsf{S}$-indexed
monoidal category \cite{corradini1993categorical,hofstra2006descent,shulman2013enriched}
is a pseudofunctor $\mathsf{S}^{\mathrm{op}}\to\mathsf{MonCat}_{s}$,
though we use $\mathsf{MonCat}_{\ell}$ instead in this paper). Moreover,
$-/\mathsf{FinProb}$ is actually a strict functor, so $-/\mathsf{FinProb}$
is a ``strictly-indexed monoidal category''.

Treat $\mathsf{MonCat}_{\ell}$ as a monoidal category with a tensor
product $\times$ given by product category. We can check that the
under category functor $-/\mathsf{FinProb}:\,\mathsf{FinProb}^{\mathrm{op}}\to\mathsf{MonCat}_{\ell}$
is a lax monoidal functor, where the coherence map 
\begin{equation}
\zeta_{P,Q}:(P/\mathsf{FinProb})\times(Q/\mathsf{FinProb})\to(P\otimes Q)/\mathsf{FinProb}\label{eq:under_coherence}
\end{equation}
sends a pair of random variables $X:P\to A$, $Y:Q\to B$ to the joint
random variable $X\otimes Y:P\otimes Q\to A\otimes B$ over the product
probability space $P\otimes Q$. Note that $\zeta_{P,Q}$ itself is
a lax monoidal functor as well.\footnote{The coherence map of $\zeta_{P,Q}$ is $\varpi_{(X_{1},Y_{1}),(X_{2},Y_{2})}:\zeta_{P,Q}(X_{1},Y_{1})\otimes_{P\otimes Q}\zeta_{P,Q}(X_{2},Y_{2})\to\zeta_{P,Q}(X_{1}\otimes_{P}X_{2},Y_{1}\otimes_{Q}Y_{2})$
for $X_{1},X_{2}\in P/\mathsf{FinProb}$, $Y_{1},Y_{2}\in Q/\mathsf{FinProb}$,
where $\otimes_{P}=\times_{P}$ is the tensor product in $P/\mathsf{FinProb}$.
Since $\zeta_{P,Q}(X_{1},Y_{1})\otimes_{P\otimes Q}\zeta_{P,Q}(X_{2},Y_{2})$
is the joint random variable ``$((X_{1},Y_{1}),(X_{2},Y_{2}))$''
over the probability space $P\otimes Q$, and $\zeta_{P,Q}(X_{1}\otimes_{P}X_{2},Y_{1}\otimes_{Q}Y_{2})$
is the joint random variable ``$((X_{1},X_{2}),(Y_{1},Y_{2}))$'',
$\varpi$ simply reorders $((X_{1},Y_{1}),(X_{2},Y_{2}))$ to $((X_{1},X_{2}),(Y_{1},Y_{2}))$.} Hence, $-/\mathsf{FinProb}$ is a ``monoidal strictly-indexed monoidal
category'' (recall that a monoidal indexed category \cite{moeller2018monoidal}
is a lax monoidal pseudofunctor in the form $\mathsf{S}^{\mathrm{op}}\to\mathsf{Cat}$
where $\mathsf{S}$ is a monoidal category).

In this section, we define the concept of \emph{monoidal strictly-indexed
monoidal category}. It is basically a straightforward combination
of the notions of indexed monoidal categories \cite{corradini1993categorical,hofstra2006descent,shulman2013enriched}
and monoidal indexed categories \cite{moeller2018monoidal}, where
we focus on (strict) functors instead of pseudofunctors. It can be
defined succinctly below as an object in a lax-slice 2-category \cite{johnson2021two}.

\medskip{}

\begin{defn}
[Monoidal strictly-indexed monoidal categories] \label{def:gcmcat_succinct}Assume
$\mathsf{MonCat}_{\ell}$ contains only small categories with respect
to a Grothendieck universe. Let $\mathbb{U}$ be a Grothendieck universe
large enough such that $\mathbb{U}\mathsf{MonCat}_{\ell}$, the 2-category
of $\mathbb{U}$-small monoidal categories with lax monoidal functors
as 1-cells and monoidal natural transformations as 2-cells, contains
$\mathsf{MonCat}_{\ell}$ as an object. Then the \emph{category of
monoidal strictly-indexed monoidal (MonSiMon) categories} $\mathsf{MonSiMonCat}$
is given as the lax-slice 2-category 
\[
\mathsf{MonSiMonCat}=\mathbb{U}\mathsf{MonCat}_{\ell}/\!/\mathsf{MonCat}_{\ell}.
\]
\end{defn}

\medskip{}

\medskip{}

In short, \emph{$\mathsf{MonSiMonCat}$ is the lax-slice 2-category
over $\mathsf{MonCat}_{\ell}$ in the 2-category of monoidal categories}.\footnote{Written out, \emph{$\mathsf{MonSiMonCat}$ is the lax-slice 2-category
over the category of monoidal categories with lax monoidal functors
as morphisms, in the 2-category of monoidal categories with lax monoidal
functors as 1-cells and monoidal natural transformations as 2-cells},
which could be a tongue twister.} We now unpack the above definition. An object of $\mathsf{MonSiMonCat}$,
which we call a \emph{monoidal strictly-indexed monoidal (MonSiMon)
category}, is a pair $(\mathsf{C},N)$, where $\mathsf{C}\in\mathbb{U}\mathsf{MonCat}_{\ell}$,
and $N:\mathsf{C}^{\mathrm{op}}\to\mathsf{MonCat}_{\ell}$ is a lax
monoidal functor. A morphism in $\mathsf{MonSiMonCat}$ (which we
call a \emph{MonSiMon functor}) from $(\mathsf{C},N)$ to $(\mathsf{D},M)$
is a pair $(F,\gamma)$, where $F:\mathsf{C}\to\mathsf{D}$ is a lax
monoidal functor, and $\gamma:N\Rightarrow MF^{\mathrm{op}}$ is a
monoidal natural transformation. Composition of MonSiMon functors
$(\mathsf{C}_{1},N_{1})\stackrel{(F_{1},\gamma_{1})}{\to}(\mathsf{C}_{2},N_{2})\stackrel{(F_{2},\gamma_{2})}{\to}(\mathsf{C}_{3},N_{3})$
is given by $(F_{2}F_{1},\,\gamma_{2}F_{1}^{\mathrm{op}}\circ\gamma_{1})$.
\[\begin{tikzcd}
	{\mathsf{C}_1^{\mathrm{op}}} && {\mathsf{C}_2^{\mathrm{op}}} && {\mathsf{C}_3^{\mathrm{op}}} \\
	\\
	&& {\mathsf{MonCat}_\ell}
	\arrow["{F_1^{\mathrm{op}}}", from=1-1, to=1-3]
	\arrow[""{name=0, anchor=center, inner sep=0}, "{N_1}"', from=1-1, to=3-3]
	\arrow[""{name=1, anchor=center, inner sep=0}, "{N_2}"{description}, from=1-3, to=3-3]
	\arrow["{F_2^{\mathrm{op}}}", from=1-3, to=1-5]
	\arrow["{N_3}", from=1-5, to=3-3]
	\arrow["{\gamma_1}", shorten <=6pt, shorten >=6pt, Rightarrow, from=0, to=1-3]
	\arrow["{\gamma_2}", shorten <=12pt, shorten >=12pt, Rightarrow, from=1, to=1-5]
\end{tikzcd}\]

Refer to \cite{baez2004some} for the definition of monoidal natural
transformations. For the convenience of the readers, we include the
coherence conditions for $\gamma:N\Rightarrow MF^{\mathrm{op}}$ being
a monoidal natural transformation here. In addition of being a natural
transformation, the following diagrams in $\mathsf{MonCat}_{\ell}$
must also commute for every $P,Q\in\mathsf{C}$:
\[\begin{tikzcd}
	{N(P)\times N(Q)} && {MF(P)\times MF(Q)} && {*} \\
	{N(P\otimes_{\mathsf{C}} Q)} && {MF(P\otimes_{\mathsf{C}} Q)} && {N(I_{\mathsf{C}} )} & {MF(I_{\mathsf{C}} )}
	\arrow["{\zeta_{P,Q}}"', from=1-1, to=2-1]
	\arrow["{\gamma_P \times \gamma_Q}", from=1-1, to=1-3]
	\arrow["{\eta_{F(P), F(Q)}}", from=1-3, to=2-3]
	\arrow["{\gamma_{P\otimes_{\mathsf{C}} Q}}"', from=2-1, to=2-3]
	\arrow["\epsilon"', from=1-5, to=2-5]
	\arrow["\delta", from=1-5, to=2-6]
	\arrow["{\gamma_{I_{\mathsf{C}}}}"', from=2-5, to=2-6]
\end{tikzcd}\]where $N(P)\times N(Q)$ denotes the product monoidal category, $I_{\mathsf{C}}$
is the tensor unit of the tensor product $\otimes_{\mathsf{C}}$ of
$\mathsf{C}$, and $\zeta_{P,Q},\epsilon$ are the coherence maps
of the lax monoidal functor $N$, and $\eta_{F(P),F(Q)},\delta$ are
those of $MF^{\mathrm{op}}$.

Note that $\mathsf{MonSiMonCat}$ is a 2-category, where a 2-cell
$(F,\gamma)\Rightarrow(G,\delta):\,(\mathsf{C},N)\to(\mathsf{D},M)$
is a monoidal natural transformation $\kappa:F\Rightarrow G$ such
that $(M\kappa)\circ\gamma=\delta$, which comes from the definition
of lax-slice 2-category \cite{johnson2021two}. In this paper, we
will usually treat $\mathsf{MonSiMonCat}$ as a 1-category and ignore
these 2-cells.

\medskip{}

One class of examples of MonSiMon categories is $(\mathsf{C},N)$
where $\mathsf{C}$ is a monoidal category, and $N=-/\mathsf{C}:\,\mathsf{C}^{\mathrm{op}}\to\mathsf{MonCat}_{\ell}$
is the under category functor, which is a lax monoidal functor with
coherence map $\zeta:N(-)\times N(-)\Rightarrow N(-\otimes-)$ given
by the tensor product in $\mathsf{C}$, i.e., $\zeta_{P,Q}(X,Y)=X\otimes Y$
for $X:P\to A$, $Y:Q\to B$,\footnote{For morphisms, $\zeta_{P,Q}(f,g)=f\otimes g$ for $X:P\to A$, $X':P\to A'$,
$f:A\to A'$ with $X'=fX$ (i.e., $f$ is a morphism in $N(P)$),
and $Y:Q\to B$, $Y':Q\to B'$, $g:B\to B'$ with $Y'=gY$.} and $\epsilon:*\to N(I_{\mathsf{C}})$ mapping the unique object
to the tensor unit of $N(I_{\mathsf{C}})$. For $(\mathsf{C},N)$
to be a MonSiMon category, we also need $\zeta_{P,Q}$ to be a lax
monoidal functor satisfying the coherence conditions, though they
are usually straightforward to check for specific examples of $\mathsf{C}$.
We will write such MonSiMon category as $(\mathsf{C},\,-/\mathsf{C})$,
or even simply $-/\mathsf{C}$ if it is clear from the context.

For example, consider the MonSiMon category $(\mathsf{FinProb},\,-/\mathsf{FinProb})$.
The functor $N=-/\mathsf{FinProb}$ sends a finite probability space
$P$ to the monoidal category $N(P)$ of random variables defined
over $P$, where objects are random variables defined over $P$ (measure-preserving
functions from $P$ to another finite probability space), morphisms
are measurable functions between random variables, and the tensor
product is the joint random variable. The functor $N$ can be regarded
as a $\mathsf{MonCat}_{\ell}$-valued presheaf on $\mathsf{FinProb}$.
The functor $N$ is lax monoidal, with a coherence map $\zeta_{P,Q}$
sending a pair of random variables $(X,Y)$ (over the space $P$ and
the space $Q$, respectively) to the joint random variable $(X,Y)$
over the product space $P\otimes Q$. See \eqref{eq:under_coherence}.

We remark that the usual notations for indexed category \cite{pare2006indexed}
$\mathbb{C}:\mathsf{C}^{\mathrm{op}}\to\mathsf{Cat}$ is to write
$\mathbb{C}^{X}=\mathbb{C}(X)$ for objects $X\in\mathsf{C}$, and
$f^{*}=\mathbb{C}(f)$ for morphisms $f$ in $\mathsf{C}$. The archetypal
example of indexed category is the self-indexing, where $\mathsf{C}$
is a category with pullbacks, $\mathbb{C}^{X}=\mathsf{C}/X$ is the
over category, and $f^{*}$ is the base change functor given by pullback.
We avoid these notations since $N(X)$ in this paper is often the
under category (not the over category), and $N(f)$ is often the precomposition
functor (not the base change), so writing $f^{*}$ (which usually
denotes the base change) would be confusing. We use the standard functor
notations and write $N(X)$ and $N(f)$.

\medskip{}

\section{Reflective Subcategories of $\mathsf{MonSiMonCat}$\label{sec:subcats}}

A MonSiMon category $(\mathsf{C},N)$ can be thought as a collection
of ``categories of random variables over the probability space $P$''
given by $N(P)$, linked together by the contravariant functor $N:\mathsf{C}^{\mathrm{op}}\to\mathsf{MonCat}_{\ell}$.
Our goal is to define an ``entropy'' over the MonSiMon category
$(\mathsf{C},N)$ as a MonSiMon functor from $(\mathsf{C},N)$ to
a suitable codomain, which is another MonSiMon category satisfying
some desirable properties. An entropy sends any random variable over
any probability space to a fixed space (e.g. $\mathbb{R}$). The space
where the entropy of a random variable $X\in N(P)$ resides should
not depend on the probability space $P$. Hence, the codomain of the
entropy, as a MonSiMon category, should be in the form $(*,\,F:*\to\mathsf{MonCat}_{\ell})$
(where $*$ is the terminal monoidal category with one object $\bullet$),
which is a collection that contains only one ``space'' $F(\bullet)$.
Every MonSiMon functor $(!,\gamma)$ from $(\mathsf{C},N)$ to $(*,\,F:*\to\mathsf{MonCat}_{\ell})$
would map every random variable $X\in N(P)$ to an object $\gamma_{P}(X)\in F(\bullet)$
in the same space $F(\bullet)$. If our goal is only to recover the
Shannon entropy, then we can simply take $F(\bullet)=\mathbb{R}$.
Nevertheless, not every MonSiMon category has a universal entropy
to $\mathbb{R}$ (we will see later that $-/\mathsf{LProb}_{\rho}$
for $0<\rho<1$ has a universal entropy to $\mathbb{R}$, but $-/\mathsf{FinProb}$
only has a universal entropy to $(\log\mathbb{Q}_{>0})\times\mathbb{R}$).
To allow the universal entropy to be defined for every MonSiMon category,
we want to find a ``category of codomains'' of entropy that is more
general than $\mathbb{R}$.

In this paper, we will take the ``category of codomains'' to be
$\mathsf{OrdCMon}$ or its reflective subcategories, where $\mathsf{OrdCMon}$
is the category of ordered commutative monoids with order-preserving
monoid homomorphisms as morphisms \cite{blyth2005lattices,fritz2017resource}.
 Each ordered commutative monoid $\mathsf{M}$ in $\mathsf{OrdCMon}$
can be treated as a MonSiMon category $(*,N)$, where $N(\bullet)\in\mathsf{MonCat}_{\ell}$
is simply taken to be $\mathsf{M}$ treated as a symmetric monoidal
skeletal posetal category. This defines an embedding functor $R:\mathsf{OrdCMon}\to\mathsf{MonSiMonCat}$.
The next section will provide justifications for restricting attention
to $\mathsf{OrdCMon}$.

\medskip{}

\subsection{$\mathsf{OrdCMon}$ as a Reasonable ``Category of Codomains''}

In this subsection, we explain why $\mathsf{OrdCMon}$ is a reasonable
choice of the codomain of an entropy. This subsection is not essential,
and readers may opt to skip this subsection.

Write $[*,\mathsf{MonCat}_{\ell}]$ for the category of lax monoidal
functors $*\to\mathsf{MonCat}_{\ell}$ (where $*$ is the terminal
monoidal category with one object $\bullet$), with morphisms being
monoidal natural transformations. It is known that this is simply
the category of monoids in $\mathsf{MonCat}_{\ell}$ \cite{porst2008categories,nlab:microcosm_principle}:
\[
[*,\mathsf{MonCat}_{\ell}]\cong\mathsf{Mon}(\mathsf{MonCat}_{\ell}).
\]
Recall that $\mathsf{Mon}(\mathsf{MonCat}_{\ell})$ is the category
of monoid objects in $\mathsf{MonCat}_{\ell}$, with morphisms being
morphisms in $\mathsf{MonCat}_{\ell}$ that respects the monoid structure.
A lax monoidal functor $F:*\to\mathsf{MonCat}_{\ell}$ have coherence
maps $\mu_{\bullet,\bullet}:F(\bullet)\times F(\bullet)\to F(\bullet)$
and $\nu:*\to F(\bullet)$, which can be treated as the multiplication
and unit morphisms of $F(\bullet)$ respectively, making $(F(\bullet),\mu_{\bullet,\bullet},\nu)$
a monoid in $\mathsf{MonCat}_{\ell}$. Also, each object in $\mathsf{Mon}(\mathsf{MonCat}_{\ell})$
is a duoidal category \cite{aguiar2010monoidal,batanin2012centers}
(a category with two tensor products satisfying some coherence conditions),
where one product $\mu_{\bullet,\bullet}$ of $F(\bullet)$ is strict
monoidal, and the other product is the tensor product of $F(\bullet)\in\mathsf{MonCat}_{\ell}$.
We can then embed $\mathsf{Mon}(\mathsf{MonCat}_{\ell})$ into $\mathsf{MonSiMonCat}$
via the functor $E:\mathsf{Mon}(\mathsf{MonCat}_{\ell})\to\mathsf{MonSiMonCat}$
defined as the composition
\[
\mathsf{Mon}(\mathsf{MonCat}_{\ell})\cong[*,\mathsf{MonCat}_{\ell}]\hookrightarrow\mathsf{MonSiMonCat},
\]
where ``$\hookrightarrow$'' is the obvious embedding. It follows
from $[*,\mathsf{MonCat}_{\ell}]\cong\mathsf{Mon}(\mathsf{MonCat}_{\ell})$
that $E$ is fully faithful. We can regard $\mathsf{Mon}(\mathsf{MonCat}_{\ell})$
as a full subcategory of $\mathsf{MonSiMonCat}$.

 Since the purpose of entropy is to compare the amount of information
in different objects, we will assume that an entropy takes values
over a monoidal poset, or a monoidal skeletal posetal category.\footnote{A skeletal posetal category is a category where the homset between
two objects contains at most one morphism, and it is skeletal (isomorphisms
are identity morphisms). Hence, every strong monoidal functor between
monoidal skeletal posetal categories must be strict.} Equivalently, this is $\mathsf{OrdMon}$, the category of (partially)
ordered monoids with order-preserving monoid homomorphisms as morphisms
\cite{blyth2005lattices}. Recall that an ordered monoid is a monoid
equipped with a partial order $\ge$ which is order-preserving, i.e.,
$x\ge y$ and $z\ge w$ implies $x+z\ge y+w$. Note that each element
of $\mathsf{OrdMon}$ is regarded as a monoidal skeletal posetal category,
not a one-object category (which is sometimes how a monoid is treated
as a category). Hence, we will consider $[*,\mathsf{OrdMon}]\cong\mathsf{Mon}(\mathsf{OrdMon})$
instead of $[*,\mathsf{MonCat}_{\ell}]\cong\mathsf{Mon}(\mathsf{MonCat}_{\ell})$.

By the standard Eckmann-Hilton argument \cite{eckmann1962group},
we have 
\[
[*,\mathsf{OrdMon}]\cong\mathsf{Mon}(\mathsf{OrdMon})\cong\mathsf{OrdCMon},
\]
where $\mathsf{OrdCMon}$ is the category of (partially) ordered commutative
monoids with order-preserving monoid homomorphisms as morphisms \cite{blyth2005lattices,fritz2017resource}.
Again we emphasize that an object of $\mathsf{OrdCMon}$ should be
regarded as a symmetric monoidal skeletal posetal category, not a
one-object category. For the direction $\mathsf{OrdCMon}\to\mathsf{Mon}(\mathsf{OrdMon})\cong[*,\mathsf{OrdMon}]$,
we can map a symmetric monoidal posetal category $\mathsf{C}$ to
the monoid $(\mathsf{C},\otimes_{\mathsf{C}},I_{\mathsf{C}})$ in
$\mathsf{OrdMon}$, or the lax monoidal functor $F:*\to\mathsf{OrdMon}$
in $[*,\mathsf{OrdMon}]$ with $F(\bullet)=\mathsf{C}$ and coherence
maps $\otimes_{\mathsf{C}}:F(\bullet)\times F(\bullet)\to F(\bullet)$
and $I_{\mathsf{C}}:*\to F(\bullet)$ in $\mathsf{OrdMon}$. Another
benefit of considering $\mathsf{Mon}(\mathsf{OrdMon})$ is that now
we do not have to consider two different products (as in duoidal categories
in $\mathsf{Mon}(\mathsf{MonCat}_{\ell})$), since the two products
must coincide and must be commutative \cite{eckmann1962group}.\footnote{We briefly repeat the argument in \cite{eckmann1962group} here. For
a monoid $(\mathsf{C},\odot,J)$ in $\mathsf{OrdMon}$, whether the
product and identity of $\mathsf{C}$ are $\otimes$ and $I$ respectively,
we must have $J=I$ since the unit $*\stackrel{J}{\to}\mathsf{C}$
of the monoid must map to $I$. Since $\odot$ is a morphism in $\mathsf{OrdMon}$,
we have $(A\odot B)\otimes(C\odot D)=(A\otimes C)\odot(B\otimes D)$.
Hence $A\otimes B=(A\odot I)\otimes(I\odot B)=(A\otimes I)\odot(I\otimes B)=A\odot B$.
Also, $A\otimes B=(I\odot A)\otimes(B\odot I)=(I\otimes B)\odot(A\otimes I)=B\odot A$,
so $\otimes$ and $\odot$ coincide and are both commutative.}

Hence, we can embed $\mathsf{OrdCMon}$ into $\mathsf{MonSiMonCat}$
via $R:\mathsf{OrdCMon}\to\mathsf{MonSiMonCat}$ defined as the composition
\[
\mathsf{OrdCMon}\cong\mathsf{Mon}(\mathsf{OrdMon})\hookrightarrow\mathsf{Mon}(\mathsf{MonCat}_{\ell})\stackrel{E}{\to}\mathsf{MonSiMonCat}.
\]

To summarize, we can deduce that the ``category of codomains'' must
be $\mathsf{OrdCMon}$ based on the sole assumption that an entropy
takes values over a poset. The poset is automatically an ordered monoid
due to the definition of MonSiMon categories, and it is automatically
commutative by the Eckmann-Hilton argument \cite{eckmann1962group}.

\medskip{}

\subsection{$\mathsf{OrdCMon}$ as a Reflective Subcategory}

We can write the left adjoint of $R$ explicitly, exhibiting $\mathsf{OrdCMon}$
as a reflective subcategory of $\mathsf{MonSiMonCat}$.\medskip{}

\begin{lem}
\label{lem:lgcm}$R:\mathsf{OrdCMon}\to\mathsf{MonSiMonCat}$ is fully
faithful, and has a left adjoint $L:\mathsf{MonSiMonCat}\to\mathsf{OrdCMon}$.
Hence, $\mathsf{OrdCMon}$ is a reflective subcategory of $\mathsf{MonSiMonCat}$.
\end{lem}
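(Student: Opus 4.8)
The plan is to reduce morphisms into the image of $R$ to monoidal natural transformations into a constant functor, establish full faithfulness, and then present the left adjoint $L$ explicitly by generators and relations. First I would unpack a MonSiMon morphism $(\mathsf{C},N)\to R(\mathsf{W})=(*,N_{\mathsf{W}})$. Since $*$ is terminal in $\mathbb{U}\mathsf{MonCat}_{\ell}$, the functor part is forced to be the unique $!:\mathsf{C}\to*$, and $N_{\mathsf{W}}\circ\,!^{\mathrm{op}}=\Delta_{\mathsf{W}}$ is the constant functor at $\mathsf{W}$; hence such morphisms are exactly the monoidal natural transformations $\gamma:N\Rightarrow\Delta_{\mathsf{W}}$. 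Unwinding the $1$-cell components of $\gamma$ and the two coherence squares of Section \ref{sec:gcmcat}, the datum of $\gamma$ is an assignment $X\mapsto\gamma_{P}(X)\in\mathsf{W}$, for every object $X$ of $N(P)$ and every $P\in\mathsf{C}$, subject to four families of constraints: \emph{monotonicity} $\gamma_{P}(X)\ge\gamma_{P}(Y)$ whenever $N(P)$ has a morphism $X\to Y$ (functoriality of $\gamma_{P}$); \emph{subadditivity} $\gamma_{P}(X)+\gamma_{P}(Y)\ge\gamma_{P}(X\otimes Y)$ and $0\ge\gamma_{P}(I_{N(P)})$ (lax monoidality of $\gamma_{P}$); \emph{extension-invariance} $\gamma_{P}(N(g)(X))=\gamma_{Q}(X)$ for $g:P\to Q$ (naturality); and \emph{additivity} $\gamma_{P\otimes Q}(\zeta_{P,Q}(X,Y))=\gamma_{P}(X)+\gamma_{Q}(Y)$ together with $\gamma_{I_{\mathsf{C}}}(\epsilon(\bullet))=0$ (the coherence squares).

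Full faithfulness of $R$ then follows by specializing to $\mathsf{C}=*$, $N=N_{\mathsf{M}}$: a morphism $R(\mathsf{M})\to R(\mathsf{M}')$ is a monoidal natural transformation with a single component $\gamma_{\bullet}:\mathsf{M}\to\mathsf{M}'$, where monotonicity makes $\gamma_{\bullet}$ order-preserving while additivity and the unit equality force $\gamma_{\bullet}(x+y)=\gamma_{\bullet}(x)+\gamma_{\bullet}(y)$ and $\gamma_{\bullet}(0)=0$; this is precisely an order-preserving monoid homomorphism, so $R$ is bijective on hom-sets. (Equivalently, $R$ is the composite of the fully faithful functors $\mathsf{OrdCMon}\cong\mathsf{Mon}(\mathsf{OrdMon})\hookrightarrow\mathsf{Mon}(\mathsf{MonCat}_{\ell})\stackrel{E}{\to}\mathsf{MonSiMonCat}$ from the previous subsection.)

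For the left adjoint, I would build $L(\mathsf{C},N)$ as the ordered commutative monoid \emph{presented} by one generator $[X]$ for each object $X$ of $N(P)$, ranging over all $P\in\mathsf{C}$ (a genuine set, since $\mathsf{C}$ is $\mathbb{U}$-small and each $N(P)$ is small), modulo relations mirroring the four families above. Concretely: form the free commutative monoid on these generators; impose the monoid congruence generated by the equalities $[N(g)(X)]=[X]$, $[\zeta_{P,Q}(X,Y)]=[X]+[Y]$ and $[\epsilon(\bullet)]=0$; equip the quotient with the smallest translation-invariant preorder containing $[X]\ge[Y]$ (for $X\to Y$ in $N(P)$), $[X]+[Y]\ge[X\otimes Y]$ and $0\ge[I_{N(P)}]$; and pass to the poset quotient by antisymmetry. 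The unit $\eta_{(\mathsf{C},N)}:(\mathsf{C},N)\to R(L(\mathsf{C},N))$ has components $\eta_{P}(X)=[X]$, which by construction satisfy every constraint of the first step, hence assemble into a monoidal natural transformation. The universal property is then immediate from the presentation: any $\gamma:N\Rightarrow\Delta_{\mathsf{W}}$ respects exactly the defining relations, so $[X]\mapsto\gamma_{P}(X)$ extends uniquely to an order-preserving homomorphism $\bar{\gamma}:L(\mathsf{C},N)\to\mathsf{W}$ with $\bar{\gamma}\circ\eta=\gamma$, uniqueness holding because the $[X]$ generate. This yields a bijection $\mathsf{OrdCMon}(L(\mathsf{C},N),\mathsf{W})\cong\mathsf{MonSiMonCat}((\mathsf{C},N),R(\mathsf{W}))$ natural in both variables, so $L\dashv R$; since $R$ is fully faithful, $\mathsf{OrdCMon}$ is reflective.

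The main obstacle I expect is the bookkeeping that the presentation captures the monoidal-natural-transformation structure \emph{exactly} --- that the four families of relations are complete (every $\gamma$ satisfies them) and that no spurious relations are silently forced --- together with verifying that the translation-invariant preorder descends to a genuine partial order compatible with $+$, so that $L(\mathsf{C},N)$ really lands in $\mathsf{OrdCMon}$ and $\eta$ really is a monoidal natural transformation. Everything else is a routine translation between the $2$-categorical coherence diagrams of Section \ref{sec:gcmcat} and the algebraic generators-and-relations data.
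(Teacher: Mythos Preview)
Your proposal is correct and takes essentially the same approach as the paper: both construct $L(\mathsf{C},N)$ as the free commutative monoid on $\coprod_{P}\mathrm{Ob}(N(P))$ subject to the relations coming from morphisms in $N(P)$, the tensor products $\otimes_{N(P)}$ and their units, the reindexing functors $N(f)$, and the coherence maps $\zeta,\epsilon$, then pass to the poset quotient of the resulting preorder. The paper encodes all relations (equalities and inequalities alike) in a single binary relation $\mathcal{R}$ and takes its transitive monoidal closure, whereas you first impose the equalities as a monoid congruence and then the inequalities as a translation-invariant preorder; these are equivalent packagings of the same presentation, and your preliminary unpacking of a morphism $(\mathsf{C},N)\to R(\mathsf{W})$ into the four families of constraints makes the universal property slightly more transparent.
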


\begin{proof}
Since $E$ is fully faithful, and $\mathsf{OrdCMon}\hookrightarrow\mathsf{Mon}(\mathsf{MonCat}_{\ell})$
is fully faithful,\footnote{Consider $\mathsf{C},\mathsf{D}\in\mathsf{OrdCMon}$ and morphism
$F:\mathsf{C}\to\mathsf{D}$ in $\mathsf{Mon}(\mathsf{MonCat}_{\ell})$.
$F$ is order-preserving since it is also a morphism in $\mathsf{MonCat}_{\ell}$,
and $F(X\otimes Y)=F(X)\otimes F(Y)$ since $F$ is a morphism in
a category of monoids. Hence $F$ corresponds to a unique morphism
in $\mathsf{OrdCMon}$.} $R$ is fully faithful as well. We now construct $L$. Consider a
MonSiMon category $(\mathsf{C},N)$, and let the coherence maps of
the lax monoidal functor $N:\mathsf{C}^{\mathrm{op}}\to\mathsf{MonCat}_{\ell}$
be $\zeta:N(-)\times N(-)\Rightarrow N(-\otimes-)$ and $\epsilon:*\to N(I_{\mathsf{C}})$.
Consider the disjoint union $\coprod_{P\in\mathsf{C}}\mathrm{Ob}(N(P))$
where $\mathrm{Ob}(N(P))$ is the set of objects of $N(P)$. Let $\mathsf{N}$
be the commutative monoid generated by the set $\coprod_{P\in\mathsf{C}}\mathrm{Ob}(N(P))$,
with product denoted as $+$ and identity denoted as $I_{\mathsf{N}}$.
Consider a binary relation $\mathcal{R}$ over $\mathsf{N}$. We want
$\mathcal{R}$ to turn $\mathsf{N}$ into a monoidal posetal category,
which we will take as $L((\mathsf{C},N))$. In order to have a MonSiMon
functor $(*,\gamma)$ from $(\mathsf{C},N)$ to $R(\mathsf{N})$ (where
$\gamma:N\Rightarrow\Delta_{\mathsf{N}}$, and $\gamma_{P}$ maps
$X\in N(P)$ to itself in $\mathsf{N}$), we need $\mathcal{R}$ to
include the following pairs:
\begin{itemize}
\item $(X,Y)$ for $X,Y\in N(P)$, $P\in\mathsf{C}$ whenever there is a
morphism $X\to Y$ in $N(P)$ (needed for the functoriality of $\gamma_{P}:N(P)\to\mathsf{M}$);
\item $(X+Y,\,X\otimes_{N(P)}Y)$ for $X,Y\in N(P)$, $P\in\mathsf{C}$
(needed for $\gamma_{P}$ being a lax monoidal functor);
\item $(I_{\mathsf{N}},\,I_{N(P)})$ for $P\in\mathsf{C}$ (needed for $\gamma_{P}$
being a lax monoidal functor);
\item $(X,\,N(f)(X))$ and $(N(f)(X),\,X)$ for $f:P\to Q$ in $\mathsf{C}$,
$X\in N(Q)$ (needed for the naturality of $\gamma$);
\item $(X+Y,\,\zeta_{P,Q}(X,Y))$ and $(\zeta_{P,Q}(X,Y),\,X+Y)$ for $P,Q\in\mathsf{C}$,
$X\in N(P)$, $Y\in N(Q)$ (needed for $\gamma$ being a monoidal
natural transformation);
\item $(I_{\mathsf{N}},\,\epsilon(\bullet))$ and $(\epsilon(\bullet),\,I_{\mathsf{N}})$
(needed for $\gamma$ being a monoidal natural transformation).
\end{itemize}
Consider the ``monoidal closure'' $\mathrm{Mon}(\mathcal{R})$ of
$\mathcal{R}$, which contains $(X_{1}+\cdots+X_{n},\,Y_{1}+\cdots+Y_{n})$
for $X_{i},Y_{i}\in\mathsf{C}$ if $(X_{i},Y_{i})\in\mathcal{R}$
for $i=1,\ldots,n$. Consider the transitive closure $\mathrm{Tra}(\mathrm{Mon}(\mathcal{R}))$,
which contains $(X,Y)$ if $(X,Z_{1}),(Z_{1},Z_{2}),\ldots,(Z_{n-1},Z_{n}),(Z_{n},Y)\in\mathrm{Mon}(\mathcal{R})$
for some $Z_{1},\ldots,Z_{n}\in\mathsf{C}$. Note that $\mathrm{Mon}(\mathrm{Tra}(\mathrm{Mon}(\mathcal{R})))=\mathrm{Tra}(\mathrm{Mon}(\mathcal{R}))$,
and $\mathrm{Tra}(\mathrm{Mon}(\mathcal{R}))$ is the smallest monoidal
preorder containing $\mathcal{R}$. We can then turn $\mathsf{N}$
into a preordered commutative monoid using $\mathrm{Tra}(\mathrm{Mon}(\mathcal{R}))$.
Let $\bar{\mathsf{N}}$ be the symmetric monoidal posetal category
(or equivalently, ordered commutative monoid) where objects are equivalent
classes in the preorder of $\mathsf{N}$, tensor product is given
by $+$, and there is a morphism $X\to Y$ if $X\ge Y$ in the preorder.
We then take $L((\mathsf{C},N))=\bar{\mathsf{N}}$. Consider the MonSiMon
functor $(*,\bar{\gamma})$ from $(\mathsf{C},N)$ to $R(\bar{\mathsf{N}})$
where $\bar{\gamma}_{P}$ maps $X\in N(P)$ to its equivalence class
in $\bar{\mathsf{N}}$. It is clear from the construction that $\bar{\mathsf{N}}$
is the universal symmetric monoidal posetal category such that there
is a MonSiMon functor from $(\mathsf{C},N)$ to $R(\bar{\mathsf{N}})$,
and $(*,\bar{\gamma})$ is the universal functor from $(\mathsf{C},N)$
to $R$. The result follows.

\end{proof}

\medskip{}

\subsection{Other ``Categories of Codomains'' of Entropy\label{subsec:cat_of_codomains}}

Since $\mathsf{OrdCMon}$ is a reflective subcategory of $\mathsf{MonSiMonCat}$,
every reflective subcategory of $\mathsf{OrdCMon}$ is also a reflective
subcategory of $\mathsf{MonSiMonCat}$, and would be a suitable choice
for the ``category of codomains'' of an entropy. We have the following
diagram of reflective subcategories:
\[\begin{tikzcd}
	{\mathsf{IcOrdVect}_{\mathbb{Q}}} & {\mathsf{IcOrdAb}} & {\mathsf{IcOrdCMon}} \\
	{\mathsf{OrdVect}_{\mathbb{Q}}} & {\mathsf{OrdAb}} & {\mathsf{COrdCMon}} & {\mathsf{OrdCMon}} \\
	&&& {\mathsf{MonSiMonCat}}
	\arrow[hook, from=2-4, to=3-4]
	\arrow[hook, from=1-3, to=2-3]
	\arrow[hook, from=2-3, to=2-4]
	\arrow[hook, from=1-1, to=1-2]
	\arrow[hook, from=1-2, to=1-3]
	\arrow[hook, from=2-2, to=2-3]
	\arrow[hook, from=2-1, to=2-2]
	\arrow[hook, from=1-2, to=2-2]
	\arrow[hook, from=1-1, to=2-1]
\end{tikzcd}\]which will be explained in the following paragraphs.

\medskip{}

\begin{defn}
[Cancellative ordered commutative monoids \cite{kehayopulu1998separative,fritz2017resource}]
$\mathsf{COrdCMon}$ is the category where each object is a cancellative
ordered commutative monoid, i.e., $M\in\mathsf{OrdCMon}$ such that
for all $x,y,z\in M$, $x+z\ge y+z$ implies $x\ge y$ \cite{kehayopulu1998separative,fritz2017resource}.
Morphisms in $\mathsf{COrdCMon}$ are order-preserving homomorphisms. 
\end{defn}

\medskip{}

\begin{defn}
[Ordered abelian groups \cite{fuchs2011partially}] $\mathsf{OrdAb}$
is the category where each object is an ordered abelian group, i.e.,
an abelian group $G$ with a partial order $\ge$ satisfying that
$x\ge y$ implies $x+z\ge y+z$ for all $z$ \cite{fuchs2011partially}.
Morphisms in $\mathsf{OrdAb}$ are order-preserving homomorphisms. 
\end{defn}

\medskip{}

\begin{defn}
[Ordered vector spaces \cite{narici2010topological}] $\mathsf{OrdVect}_{\mathbb{Q}}$
is the category where each object is an ordered vector space over
$\mathbb{Q}$, i.e., a vector space $V$ over $\mathbb{Q}$ with a
partial order $\ge$ such that for all $x,y,z\in V$, $a\in[0,\infty)$,
$x\ge y$ implies $x+z\ge y+z$, and $x\ge y$ implies $ax\ge ay$
\cite{narici2010topological}. Morphisms in $\mathsf{OrdVect}_{\mathbb{Q}}$
are order-preserving linear functions. 
\end{defn}

\medskip{}

The following inclusions are straightforward:
\[
\mathsf{OrdVect}_{\mathbb{Q}}\hookrightarrow\mathsf{OrdAb}\hookrightarrow\mathsf{COrdCMon}\hookrightarrow\mathsf{OrdCMon}.
\]
It has been shown in \cite{fritz2017resource} that the above three
embeddings are reflective subcategories. We briefly describe the reflections
below:
\begin{itemize}
\item The reflection of $M\in\mathsf{OrdCMon}$ in $\mathsf{COrdCMon}$
is constructed as follows \cite{fritz2017resource}. Define a preorder
$\succeq$ over $M$ as $x\succeq y$ if there exists $z$ such that
$x+z\ge y+z$. The reflection is the cancellative ordered commutative
monoid $(M/\sim,\,\succeq)$, where $\sim$ is an equivalence relation
defined as $x\sim y$ if $x\succeq y\succeq x$. This is referred
as \emph{catalytic regularization} in \cite{fritz2017resource}.
\end{itemize}
\medskip{}

\begin{itemize}
\item The reflection of $M\in\mathsf{COrdCMon}$ in $\mathsf{OrdAb}$ is
the standard Grothendieck group of differences. More explicitly, the
reflection $G$ contains formal differences in the form $x-y$, where
$x,y\in M$, modulo the equivalence relation $x-y\sim z-w$ if $x+w=z+y$
in $M$, with operations $-(x-y)=(y-x)$, $(x-y)+(z-w)=(x+z)-(y+w)$,
and $x-y\ge z-w$ if $x+w\ge z+y$ in $M$. 
\end{itemize}
\medskip{}

\begin{itemize}
\item The reflection of $G\in\mathsf{OrdAb}$ in $\mathsf{OrdVect}_{\mathbb{Q}}$
is constructed as follows. Define a preorder $\succeq$ over $G$
by $x\succeq y$ if there exists $n\in\mathbb{Z}_{>0}$ such that
$nx\ge ny$. This is referred as \emph{many-copy convertibility} in
\cite{fritz2017resource}. It was shown in \cite{fritz2017resource}
that the quotient group $\tilde{G}:=G/\{x\in G:\,0\succeq x\succeq0\}$
is torsion-free. The reflection of $G$ is the vector space $V$ generated
by $\tilde{G}$, with a positive cone given by the conic hull of the
positive cone $\{x\in G:\,x\succeq0\}$ of $G$.\footnote{An earlier version of the preprint of this paper incorrectly claims
that $\mathsf{OrdVect}_{\mathbb{R}}\hookrightarrow\mathsf{OrdAb}$
is a reflective subcategory. This is untrue since the vector space
$\mathbb{R}$ (equipped with the discrete ordering), after being treated
as an abelian group, would become indistinguishable with an uncountable-dimensional
vector space over $\mathbb{Q}$ by considering the Hamel basis of
$\mathbb{R}$ over $\mathbb{Q}$.}
\end{itemize}
\medskip{}

It may also be desirable for the codomain of entropy to satisfy an
``approximate'' version of the many-copy convertibility property
in \cite{fritz2017resource}. For elements $x,y$, if there exists
$a,b$ such that $nx+a\ge ny+b$ for all $n\in\mathbb{Z}_{>0}$, then
we might expect $x\ge y$ to also hold. This is captured by the following
definition, which is a straightforward extension of the concept of
integrally closed (partially) ordered groups \cite{clifford1940partially,glass1999partially}
to cancellative ordered commutative monoids.\medskip{}

\begin{defn}
[Integrally closed ordered commutative monoids \cite{clifford1940partially,glass1999partially}]
\label{def:icocm}An ordered commutative monoid $M\in\mathsf{COrdCMon}$
is called \emph{integrally closed} if for $x,y,a,b\in M$, if $nx+a\ge ny+b$
for all $n\in\mathbb{Z}_{>0}$, then $x\ge y$.\footnote{The definition here is based on the concept of integrally closed (partially)
ordered groups \cite{clifford1940partially,glass1999partially}, where
an ordered group is integrally closed if $nx\le a$ for all $n\in\mathbb{Z}_{>0}$
implies $x\le0$. It is clear that Definition \ref{def:icocm}, when
applied on an ordered group, gives the conventional definition of
integrally closed partially ordered groups. Also note that the Grothendieck
group of differences of an integrally closed ordered commutative monoid
is an integrally closed ordered abelian group.} Let $\mathsf{IcOrdCMon}$ be the category of integrally closed ordered
commutative monoid, $\mathsf{IcOrdAb}$ be the category of integrally
closed ordered abelian group \cite{clifford1940partially,glass1999partially}
(full subcategory of $\mathsf{OrdAb}$), and $\mathsf{IcOrdVect}_{\mathbb{Q}}$
be the category of integrally closed ordered vector spaces, which
coincides with the Archimedean ordered vector spaces \cite{schaefer1999topological,emelyanov2017archimedean}
(full subcategory of $\mathsf{OrdVect}_{\mathbb{Q}}$).
\end{defn}

\medskip{}

An integrally closed ordered commutative monoid $M$ must also be
cancellative. To show this, for $x,y,z\in M$ with $x+z\ge y+z$,
we have $nx+z\ge(n-1)x+y+z\ge\cdots\ge ny+z$ for every $n\in\mathbb{Z}_{>0}$,
and hence $x\ge y$ by integral closedness. Therefore, $\mathsf{IcOrdCMon}$
is a full subcategory of $\mathsf{COrdCMon}$.

Moreover, $\mathsf{IcOrdCMon}$ is a reflective subcategory of $\mathsf{COrdCMon}$,
where the reflection of $M\in\mathsf{COrdCMon}$ is constructed as
follows. Let $\succeq$ be the smallest integrally closed preorder
over $M$ that contains $\ge$, i.e., it is the intersection of all
preorders $\tilde{\succeq}$ over $M$ which satisfies that if $x\ge y$,
then $x\,\tilde{\succeq}\,y$; and if $nx+a\,\tilde{\succeq}\,ny+b$
for all $n\in\mathbb{Z}_{>0}$, then $x\,\tilde{\succeq}\,y$. The
reflection is then taken as $\bar{M}=(M/\sim,\,\succeq)$, where $\sim$
is an equivalence relation defined as $x\sim y$ if $x\succeq y\succeq x$.
The reflection morphism $r:M\to\bar{M}$ maps an element to its equivalence
class. To show the desired adjunction, for every morphism $f:M\to S$
in $\mathsf{COrdCMon}$ where $S\in\mathsf{IcOrdCMon}$, define a
preorder $\tilde{\succeq}$ over $M$ by $x\,\tilde{\succeq}\,y$
if $f(x)\ge f(y)$. For $x,y,a,b\in M$, if $nx+a\,\tilde{\succeq}\,ny+b$
for all $n\in\mathbb{Z}_{>0}$, then $nf(x)+f(a)\ge nf(y)+f(b)$ for
all $n\in\mathbb{Z}_{>0}$, and hence $f(x)\ge f(y)$ since $S$ is
integrally closed, and $x\,\tilde{\succeq}\,y$. Therefore, $\tilde{\succeq}$
is an integrally closed preorder. Let $\bar{f}:\bar{M}\to S$ be defined
as $\bar{f}([x])=f(x)$, where $[x]$ denotes the equivalence class
of $x\in M$ modulo $\sim$. To check that $\bar{f}$ is well-defined
and order-preserving, if $x\succeq y$, then $x\,\tilde{\succeq}\,y$
by definition of $\succeq$, and $f(x)\ge f(y)$. Hence $\bar{f}r=f$.
Such $\bar{f}$ satisfying $\bar{f}r=f$ is clearly unique. Hence
$\mathsf{IcOrdCMon}$ is a reflective subcategory of $\mathsf{COrdCMon}$.
We can similarly show that $\mathsf{IcOrdAb}$ is a reflective subcategory
of $\mathsf{OrdAb}$, and that $\mathsf{IcOrdVect}_{\mathbb{Q}}$
is a reflective subcategory of $\mathsf{OrdVect}_{\mathbb{Q}}$ (this
fact about $\mathsf{IcOrdVect}_{\mathbb{Q}}$ was also shown in \cite{emelyanov2017archimedean}).

\medskip{}

\section{Universal Entropy as the Reflection Morphism\label{sec:univ}}

In the previous section, we have justified considering the codomain
of entropy to be an object of $\mathsf{OrdCMon}$, which is a reflective
subcategory of $\mathsf{MonSiMonCat}$ via the embedding $R:\mathsf{OrdCMon}\to\mathsf{MonSiMonCat}$.
We can also be more restrictive and require the codomain to be an
object of $\mathsf{IcOrdCMon}$, $\mathsf{IcOrdAb}$ or other subcategories
of $\mathsf{OrdCMon}$. Therefore, an entropy is simply defined as
a MonSiMon functor to a specified subcategory of $\mathsf{OrdCMon}$. 

\medskip{}

\begin{defn}
[$\mathsf{V}$-entropy]\label{def:gcm_entropy_non}Consider a MonSiMon
category $\mathsf{C}\in\mathsf{MonSiMonCat}$, and a subcategory $\mathsf{V}$
of $\mathsf{OrdCMon}$. A $\mathsf{V}$\emph{-entropy} of $\mathsf{C}$
is a morphism $\mathsf{C}\to R(\mathsf{W})$ in $\mathsf{MonSiMonCat}$
where $\mathsf{W}\in\mathsf{V}$.
\end{defn}

\medskip{}

A $\mathsf{V}$-entropy must be in the form $(!,h)$, where $!:\mathsf{C}\to*$
is the unique lax monoidal functor, and $h:N\Rightarrow\Delta_{\mathsf{W}}$
(where $\mathsf{W}\in\mathsf{V}$) is a monoidal natural transformation,
where $\Delta_{\mathsf{W}}:\mathsf{C}^{\mathrm{op}}\to\mathsf{MonCat}_{\ell}$
denotes the lax monoidal functor that sends everything to the monoidal
posetal category $\mathsf{W}$, with a coherence map given by the
tensor product of $\mathsf{W}$. We sometimes simply call $h$ a $\mathsf{V}$-entropy.

The universal entropy is then the universal MonSiMon functor from
$\mathsf{C}$ to $\mathsf{V}$, i.e., a morphism $F:\mathsf{C}\to R(\mathsf{W})$
in $\mathsf{MonSiMonCat}$ (where $\mathsf{W}\in\mathsf{V}$), satisfying
that every $F':\mathsf{C}\to R(\mathsf{W}')$, $\mathsf{W}'\in\mathsf{V}$
can be factorized as $F'=R(G)F$ (where $G:\mathsf{W}\to\mathsf{W}'$
is a morphism in $\mathsf{V}$) in a unique manner. In this paper,
we focus on the case where $\mathsf{V}$ is a reflective subcategory
of $\mathsf{OrdCMon}$, which guarantees that $\mathsf{V}$ is a reflective
subcategory of $\mathsf{MonSiMonCat}$ as well by Lemma \ref{lem:lgcm}.
In this case, the universal MonSiMon functor can be given as the reflection
morphism.

\medskip{}

\begin{defn}
[Universal $\mathsf{V}$-entropy]\label{def:gcm_entropy}Consider
a MonSiMon category $\mathsf{C}\in\mathsf{MonSiMonCat}$, and a reflective
subcategory $\mathsf{V}$ of $\mathsf{OrdCMon}$, the \emph{universal}
$\mathsf{V}$\emph{-entropy} of $\mathsf{C}$ is the reflection morphism
from $\mathsf{C}$ to $\mathsf{V}$. More explicitly, if $L':\mathsf{MonSiMonCat}\to\mathsf{V}$
is the left adjoint to the embedding functor $R':\mathsf{V}\to\mathsf{MonSiMonCat}$
with an adjunction unit $\phi:\mathrm{id}_{\mathsf{MonSiMonCat}}\Rightarrow R'L'$,
then the universal $G$-entropy of $\mathsf{C}$ is given by the reflection
morphism
\[
\phi_{\mathsf{C}}:\mathsf{C}\to T(\mathsf{C}),
\]
where we call $T=R'L':\mathsf{MonSiMonCat}\to\mathsf{MonSiMonCat}$
the \emph{universal} $\mathsf{V}$\emph{-entropy monad}.
\end{defn}

\medskip{}

Definition \ref{def:gcm_entropy} allows us to define the universal
$\mathsf{V}$-entropy of any MonSiMon category, for $\mathsf{V}$
being $\mathsf{OrdCMon}$, $\mathsf{IcOrdCMon}$, $\mathsf{IcOrdAb}$,
or other reflective subcategories discussed in Section \ref{subsec:cat_of_codomains}.

In the remainder of this paper, we will often omit the embedding functors
$R$, $R'$ and regard $\mathsf{V}\subseteq\mathsf{OrdCMon}\subseteq\mathsf{MonSiMonCat}$
for any reflective subcategory $\mathsf{V}$ of $\mathsf{OrdCMon}$,
so a $\mathsf{V}$-entropy of $\mathsf{C}$ would be a morphism $\mathsf{C}\to\mathsf{W}$
in $\mathsf{MonSiMonCat}$ where $\mathsf{W}\in\mathsf{V}$.

We will now discuss how Definition \ref{def:gcm_entropy_non} captures
the notion of entropy. Assume $\mathsf{V}\subseteq\mathsf{OrdCMon}$.
Let the coherence maps of the lax monoidal functor $N:\mathsf{C}^{\mathrm{op}}\to\mathsf{MonCat}_{\ell}$
be $\zeta:N(-)\times N(-)\Rightarrow N(-\otimes-)$ (where $\zeta_{P,Q}$
maps the ``random variables''\footnote{The double quotes around ``random variables'' are because the considered
MonSiMon category might not be $(\mathsf{FinProb},-/\mathsf{FinProb})$,
though one may still think of $P\in\mathsf{C}$ as a ``probability
space'' and $X\in N(P)$ as a ``random variable'' for the sake
of intuition.} $X\in N(P)$ and $Y\in N(Q)$ to the ``joint random variable''
$\zeta_{P,Q}(X,Y)$ in the ``product probability space'' $P\otimes Q$)
and $\epsilon:*\to N(I_{\mathsf{C}})$ (picking the ``degenerate
random variable'' in the ``degenerate probability space'' $N(I_{\mathsf{C}})$).
Consider a $\mathsf{V}$-entropy given as a MonSiMon functor $(!,h)$
from $(\mathsf{C},N)$ to $(*,\,*\stackrel{\mathsf{W}}{\to}\mathsf{MonCat}_{\ell})$,
$\mathsf{W}\in\mathsf{V}$. The tensor product of $\mathsf{W}$ is
denoted as $+$, and the tensor unit is denoted as $0$. For a ``random
variable'' $X\in N(P)$ over the ``probability space'' $P\in\mathsf{C}$,
its entropy is $h_{P}(X)\in\mathsf{W}$. The following properties
follow from $h$ being a monoidal natural transformation.

\medskip{}

\begin{itemize}
\item (Monotonicity) For ``random variables'' $X,Y\in N(P)$ ($P\in\mathsf{C}$)
where ``$Y$ is a function of $X$'', i.e., there is a morphism
$X\to Y$, we have
\[
h_{P}(X)\ge h_{P}(Y).
\]
This follows from the functoriality of $h_{P}:N(P)\to\mathsf{W}$,
and is precisely the monotonicity property of entropy \eqref{eq:monotone}.
\end{itemize}
\medskip{}

\begin{itemize}
\item (Subadditivity) For ``random variables'' $X,Y\in N(P)$, their ``joint
random variable'' $X\otimes_{N(P)}Y$ satisfies
\begin{equation}
h_{P}(X)+h_{P}(Y)\ge h_{P}(X\otimes_{N(P)}Y),\label{eq:monnat_subadditive}
\end{equation}
which follows from $h_{P}$ being a lax monoidal functor. This is
precisely the subadditivity property of entropy \eqref{eq:subadditive}. 
\end{itemize}
\medskip{}

\begin{itemize}
\item (Extension of probability space) For ``random variable'' $X\in N(Q)$
and an ``extension of probability space'' $f:P\to Q$, the ``random
variable in the extended space'' $N(f)(X)\in N(P)$ has the same
entropy, i.e.,
\begin{equation}
h_{P}(N(f)(-))=h_{Q}(-),\label{eq:invar_extend}
\end{equation}
which follows from the naturality of $h$. Basically, entropy is invariant
under extension of probability space, which is expected from every
reasonable probabilistic concept \cite{tao2012topics}. 
\end{itemize}
\medskip{}

\begin{itemize}
\item (Additivity) For ``random variables'' $X\in N(P)$, $Y\in N(Q)$,
their ``product random variable'' $\zeta_{P,Q}(X,Y)$ over the ``product
probability space'' $P\otimes Q$ satisfies
\[
h_{P\otimes Q}(\zeta_{P,Q}(X,Y))=h_{P}(X)+h_{Q}(Y),
\]
which follows from $h$ being a monoidal natural transformation:
\[\begin{tikzcd}
	{N(P)\times N(Q)} && {\mathsf{W}\times \mathsf{W}} && {*} \\
	{N(P\otimes Q)} && {\mathsf{W}} && {N(I_{\mathsf{C}} )} & {\mathsf{W}}
	\arrow["{\zeta_{P,Q}}"', from=1-1, to=2-1]
	\arrow["{h_P \times h_Q}", from=1-1, to=1-3]
	\arrow["{+}", from=1-3, to=2-3]
	\arrow["{h_{P\otimes_{\mathsf{C}} Q}}"', from=2-1, to=2-3]
	\arrow["\epsilon"', from=1-5, to=2-5]
	\arrow["0", from=1-5, to=2-6]
	\arrow["{h_{I_{\mathsf{C}}}}"', from=2-5, to=2-6]
\end{tikzcd}\] This is precisely the additivity property of entropy \eqref{eq:additive}.
\end{itemize}
\medskip{}

Definition \ref{def:gcm_entropy} tells us that the universal entropy
is given by the MonSiMon functor that is universal among MonSiMon
functors that satisfies the aforementioned properties. Definition
\ref{def:gcm_entropy} allows us to find the universal $\mathsf{OrdCMon}$-entropy
of any $\mathsf{C}\in\mathsf{MonSiMonCat}$ by Lemma \ref{lem:lgcm}.
Different choices of reflective subcategories $\mathsf{V}$ will greatly
affect the resultant universal entropy. In the following subsections,
we will explore the universal $\mathsf{OrdCMon}$, $\mathsf{IcOrdCMon}$,
and $\mathsf{IcOrdAb}$-entropies of various MonSiMon categories.

Conventionally, we not only talk about the entropy of a random variable,
but also the entropy of a probability distribution. Here, in the special
case where $N=-/\mathsf{C}$ is the under category functor of $\mathsf{C}$
with coherence map given by $\otimes_{\mathsf{C}}$ (see Section \ref{sec:gcmcat}),
we can also define the entropy of a ``probability space'' $P\in\mathsf{C}$.
This is a consequence of the invariance under extension of probability
space property, which tells us that the entropy $h_{P}(X)$ of the
random variable $X:P\to Q$ only depends on $Q$.

\medskip{}

\begin{prop}
\label{prop:baseless}Consider the MonSiMon category $(\mathsf{C},N)$
where $N=-/\mathsf{C}$ with coherence map given by $\otimes_{\mathsf{C}}$,
and a subcategory $\mathsf{V}$ of $\mathsf{OrdCMon}$. For any $\mathsf{V}$-entropy
$(!,h)$, $h:N\Rightarrow\Delta_{\mathsf{W}}$, $\mathsf{W}\in\mathsf{V}$,
there is a unique strongly monoidal functor $H:\mathsf{C}\to\mathsf{W}$
satisfying $Uh=UH\circ\mathrm{Cod}$ (both are monoidal natural transformations),
i.e., we have the following equality in $\mathbb{U}\mathsf{MonCat}_{\ell}$:\\
\[\begin{tikzcd}
	{\mathsf{C}^\mathrm{op}} & {*} && {\mathsf{C}^\mathrm{op}} & {*} \\
	&& {=} \\
	{\mathsf{MonCat}_\ell} &&& {\mathsf{MonCat}_\ell} & {\mathsf{MonCat}_\ell} \\
	{\mathsf{Cat}} &&& {\mathsf{Cat}}
	\arrow[""{name=0, anchor=center, inner sep=0}, "N"', from=1-1, to=3-1]
	\arrow["{!}", from=1-1, to=1-2]
	\arrow[""{name=1, anchor=center, inner sep=0}, "{\mathsf{W}}", from=1-2, to=3-1]
	\arrow["U"', from=3-1, to=4-1]
	\arrow[""{name=2, anchor=center, inner sep=0}, "N"', from=1-4, to=3-4]
	\arrow["U"', from=3-4, to=4-4]
	\arrow["{!}", from=1-4, to=1-5]
	\arrow[""{name=3, anchor=center, inner sep=0}, "{\mathsf{C}}"{description}, from=1-5, to=3-5]
	\arrow["U", from=3-5, to=4-4]
	\arrow[""{name=4, anchor=center, inner sep=0}, "{\mathsf{W}}", curve={height=-30pt}, from=1-5, to=3-5]
	\arrow["h", shorten <=4pt, shorten >=4pt, Rightarrow, from=0, to=1]
	\arrow["{\mathrm{Cod}}", shorten <=9pt, shorten >=9pt, Rightarrow, from=2, to=3]
	\arrow["H", shorten <=6pt, shorten >=6pt, Rightarrow, from=3, to=4]
\end{tikzcd}\]where $U:\mathsf{MonCat}_{\ell}\to\mathsf{Cat}$ is the forgetful
functor, $\mathrm{Cod}:UN\Rightarrow U\Delta_{\mathsf{C}}$ is the
codomain natural transformation (a monoidal natural transformation)
with component $\mathsf{Cod}_{P}(X)=Q$ for $X:P\to Q$. In this case,
we call $H$ the \emph{baseless $\mathsf{V}$-entropy} corresponding
to $h$.
\end{prop}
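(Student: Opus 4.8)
The plan is to use the ``invariance under extension of probability space'' property \eqref{eq:invar_extend} to argue that the value $h_P(X)$ depends only on the codomain of the object $X:P\to Q$ of $N(P)=P/\mathsf{C}$, and then to define $H$ as the resulting assignment on codomains. The key computation is to apply naturality of $h$ to the morphism $X:P\to Q$ of $\mathsf{C}$ \emph{itself}: since $N(X):N(Q)\to N(P)$ is precomposition with $X$, we have $N(X)(\mathrm{id}_Q)=\mathrm{id}_Q\circ X=X$, and \eqref{eq:invar_extend} gives $h_P(X)=h_P\!\big(N(X)(\mathrm{id}_Q)\big)=h_Q(\mathrm{id}_Q)$. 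Hence $h_P(X)$ depends only on $Q=\mathrm{Cod}_P(X)$, and I would set $H(Q):=h_Q(\mathrm{id}_Q)$. By construction this is the unique value compatible with the target identity $h_P(X)=H(\mathrm{Cod}_P(X))$, which both fixes $H$ on objects and proves the factorization equation $Uh=UH\circ\mathrm{Cod}$ at the level of objects (and hence on morphisms, since $\mathsf{W}$ is posetal).

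Next I would verify that $H$ is a strong monoidal functor. For functoriality into the poset $\mathsf{W}$, note that $\mathrm{id}_Q$ is the initial object of $Q/\mathsf{C}=N(Q)$, so for any $g:Q\to R$ (regarded as an object of $N(Q)$) there is a morphism $\mathrm{id}_Q\to g$; monotonicity then yields $H(Q)=h_Q(\mathrm{id}_Q)\ge h_Q(g)=H(R)$, which is exactly the morphism $H(Q)\to H(R)$ demanded of a functor. For the monoidal coherence, additivity applied to $X=\mathrm{id}_P$ and $Y=\mathrm{id}_Q$ gives $H(P\otimes Q)=h_{P\otimes Q}\!\big(\zeta_{P,Q}(\mathrm{id}_P,\mathrm{id}_Q)\big)=h_P(\mathrm{id}_P)+h_Q(\mathrm{id}_Q)=H(P)+H(Q)$, where I use that $\zeta_{P,Q}=\otimes_{\mathsf{C}}$ forces $\zeta_{P,Q}(\mathrm{id}_P,\mathrm{id}_Q)=\mathrm{id}_P\otimes\mathrm{id}_Q=\mathrm{id}_{P\otimes Q}$; and the unit coherence of $h$, together with the fact that for $N=-/\mathsf{C}$ the coherence map $\epsilon$ picks out the tensor unit of $N(I_{\mathsf{C}})$, which the left-unit coherence of $N$ forces to be $\mathrm{id}_{I_{\mathsf{C}}}$, gives $H(I_{\mathsf{C}})=h_{I_{\mathsf{C}}}(\mathrm{id}_{I_{\mathsf{C}}})=0$. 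Since $\mathsf{W}$ is a symmetric monoidal skeletal posetal category, these equalities make $H$ strict, hence strong, monoidal; and that both $Uh$ and $UH\circ\mathrm{Cod}$ are monoidal natural transformations is then immediate from $h$ and $\mathrm{Cod}$ being monoidal and $H$ being strong monoidal.

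Finally, uniqueness is immediate: any strong monoidal $H'$ with $Uh=UH'\circ\mathrm{Cod}$ must satisfy $H'(Q)=h_Q(\mathrm{id}_Q)=H(Q)$ by evaluating the identity at $X=\mathrm{id}_Q$ (where $\mathrm{Cod}_Q(\mathrm{id}_Q)=Q$), and a functor into a poset is determined by its action on objects. I expect the work here to be light; the only real points requiring care are the bookkeeping of the opposite-category conventions when invoking naturality (confirming $N(X)(\mathrm{id}_Q)=X$) and the observation that $\mathrm{id}_Q$ is initial in $Q/\mathsf{C}$, a single fact that simultaneously drives the ``codomain-only'' reduction and the monotonicity inequality. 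Everything else is a direct transcription of the monotonicity, additivity, and unit properties already recorded for $h$.
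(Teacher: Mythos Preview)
Your proposal is correct and follows essentially the same approach as the paper: define $H(Q):=h_Q(\mathrm{id}_Q)$, use naturality of $h$ (applied to $X:P\to Q$ with $N(X)(\mathrm{id}_Q)=X$) to obtain $h_P(X)=H(\mathrm{Cod}_P(X))$, verify strong monoidality via $\zeta_{P,Q}(\mathrm{id}_P,\mathrm{id}_Q)=\mathrm{id}_{P\otimes Q}$ and the additivity property, and deduce uniqueness by evaluating at $\mathrm{id}_Q$. Your version is slightly more explicit than the paper's (you spell out functoriality via the initial-object argument and the unit coherence), but the structure and key computations are the same.
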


\medskip{}

\begin{proof}
Given a $\mathsf{V}$-entropy $h:N\Rightarrow\Delta_{\mathsf{W}}$,
we define $H:\mathsf{C}\to\mathsf{W}$, $H(P)=h_{P}(\mathrm{id}_{P})$
for $P\in\mathsf{C}$, and $H(f)=h_{P}(f)$ for $f:P\to Q$. Since
the coherence map $\zeta$ of $N$ is given by the tensor product,
$H(P\otimes Q)=h_{P\otimes Q}(\mathrm{id}_{P\otimes Q})=h_{P\otimes Q}(\zeta_{P,Q}(\mathrm{id}_{P},\mathrm{id}_{Q}))=h_{P}(\mathrm{id}_{P})\otimes h_{Q}(\mathrm{id}_{Q})$,
where the last equality is because $h$ is a monoidal natural transformation.
Hence $H$ is strongly monoidal. For $X:P\to Q$, by invariance under
extension of probability space, we have $h_{P}(X)=h_{Q}(\mathrm{id}_{Q})=H(Q)=H(\mathrm{Cod}_{P}(X))$.
Hence $Uh=UH\circ\mathrm{Cod}$. For uniqueness, if $Uh=UH\circ\mathrm{Cod}$,
then $H(P)=H(\mathrm{Cod}_{P}(\mathrm{id}_{P}))=h_{P}(\mathrm{id}_{P})$
is uniquely determined by $h$.
\end{proof}
\medskip{}

Therefore, for the special case $N=-/\mathsf{C}$, there is a one-to-one
correspondence between $\mathsf{V}$-entropy $(!,h)$ and baseless
$\mathsf{V}$-entropy $H:\mathsf{C}\to\mathsf{W}$, given by $H(P)=h_{P}(\mathrm{id}_{P})$
and $h_{P}(X)=H(\mathrm{Cod}_{P}(X))$. The entropy of a random variable
$X:P\to Q$ can be obtained as $h_{P}(X)=H(Q)$, which only depends
on its distribution $Q$. To check whether $H:\mathsf{C}\to\mathsf{W}$
is a baseless entropy  when $\mathsf{W}\in\mathsf{OrdCMon}$, it
suffices to check whether $H$ satisfies the monotonicity \eqref{eq:monotone},
additivity \eqref{eq:additive} and subadditivity \eqref{eq:subadditive}
properties. More precisely, the monotonicity property is that for
$P,Q\in\mathsf{C}$ where there is a morphism $P\to Q$, we have
\begin{equation}
H(P)\ge H(Q),\label{eq:baseless_monotone}
\end{equation}
i.e., $H$ is a functor. The additivity property is that for $P,Q\in\mathsf{C}$,
\begin{equation}
H(P)+H(Q)=H(P\otimes Q),\label{eq:baseless_additive}
\end{equation}
i.e., $H$ is strongly monoidal. The subadditivity property is that
for $X,Y\in N(P)$ (where $P\in\mathsf{C}$), $X\otimes_{N(P)}Y$
(which is their ``joint random variable $(X,Y)$'') satisfies
\begin{equation}
H(\mathrm{Cod}_{P}(X))+H(\mathrm{Cod}_{P}(Y))\ge H(\mathrm{Cod}_{P}(X\otimes_{N(P)}Y)).\label{eq:baseless_subadditive}
\end{equation}
Recall that $\mathrm{Cod}_{P}:N(P)\to\mathsf{C}$ is the codomain
functor, and $\mathrm{Cod}_{P}(X)$ is the distribution of the random
variable $X$. These properties can clearly be deduced from the discussion
on monotonicity, additivity and subadditivity earlier in this section.

We can then define the universal baseless $\mathsf{V}$-entropy.

\medskip{}

\begin{defn}
[Universal baseless $\mathsf{V}$-entropy]\label{def:baseless-1}Consider
the MonSiMon category $(\mathsf{C},N)$ where $N=-/\mathsf{C}$ with
coherence map given by $\otimes_{\mathsf{C}}$, and a reflective subcategory
$\mathsf{V}$ of $\mathsf{OrdCMon}$. We call $H:\mathsf{C}\to\mathsf{W}$
a \emph{baseless }$\mathsf{V}$\emph{-entropy} if it is the baseless
$\mathsf{V}$-entropy corresponding to some $\mathsf{V}$-entropy
(see Proposition \ref{prop:baseless}). The \emph{universal baseless
$\mathsf{V}$-entropy} is a baseless $\mathsf{V}$-entropy $H:\mathsf{C}\to\mathsf{W}$
such that for every baseless $\mathsf{V}$-entropy $H':\mathsf{C}\to\mathsf{W}'$,
there is a unique $F:\mathsf{W}\to\mathsf{W}'$ in $\mathsf{V}$ such
that $H'=FH$. 
\end{defn}

\medskip{}

For the special case $N=-/\mathsf{C}$, it is straightforward to check
that $H$ is a universal baseless $\mathsf{V}$-entropy if and only
if $H$ is the baseless $\mathsf{V}$-entropy corresponding to a universal
$\mathsf{V}$-entropy $h$. These two concepts are equivalent in this
special case. Since it is often simpler to work with a monoidal functor
$H$ instead of a monoidal natural transformation $h$, we will often
use the baseless $\mathsf{V}$-entropy instead of the $\mathsf{V}$-entropy
in subsequent sections.

Nevertheless, we remark that the $\mathsf{V}$-entropy $(!,h)$ (which
is a MonSiMon functor) is the more natural notion of entropy compared
to the baseless $\mathsf{V}$-entropy $H$ (which is a strongly monoidal
functor satisfying an additional subadditivity property). The $\mathsf{V}$-entropy
$h$ is over random variables, whereas the baseless $\mathsf{V}$-entropy
$H$ is over probability distributions. 

Conventionally, although we can talk about the entropy of both random
variables and distributions, the usual understanding is that entropy
is associated with distributions (e.g. \cite{baez2011entropy}), and
the ``entropy of a random variable'' is merely a shorthand for the
``entropy of the distribution of the random variable''. In this
paper, it is the other way around, where we first define the $\mathsf{V}$-entropy,
and recover the baseless $\mathsf{V}$-entropy only for the special
case $N=-/\mathsf{C}$. The reason is threefold. First, the baseless
entropy is only defined when $N=-/\mathsf{C}$, and does not work
if $N(P)$ does not match $P/\mathsf{C}$ (e.g. the MonSiMon category
$(\mathsf{Prob},\,-/\mathsf{FinProb})$ in Section \ref{sec:shannon}).
Second, while the monotonicity and additivity properties can be stated
in terms of either $h$ or $H$, it is more natural to state the subadditivity
property as $h$ having lax monoidal components. Comparing \eqref{eq:monnat_subadditive}
with \eqref{eq:baseless_subadditive}, we can see that the former
is more natural. Subadditivity is fundamentally a property about random
variables, not distributions. Third, having the universal $\mathsf{V}$-entropy
being a reflection morphism in $\mathsf{MonSiMonCat}$ allows us to
connect the universal $\mathsf{V}$-entropies of various MonSiMon
categories in a natural manner (see Section \ref{sec:global}). Although
we often use the baseless $\mathsf{V}$-entropy instead of the $\mathsf{V}$-entropy
in subsequent sections, we emphasize that this is only due to notational
simplicity, and not because the baseless entropy is a more elegant
concept.

\medskip{}

\section{Shannon and Hartley Entropy for Finite Probability Spaces\label{sec:shannon}}

The following result gives the universal $\mathsf{COrdCMon}$-entropy
of $\mathsf{FinProb}$, which is stated in terms of the Shannon entropy
$H_{1}$ \cite{shannon1948mathematical} and the Hartley entropy $H_{0}$
\cite{hartley1928transmission}. The proof will be given later in
this section.

\medskip{}

\begin{thm}
\label{thm:finprob_ordcmon}The universal $\mathsf{COrdCMon}$-entropy
of the MonSiMon category $(\mathsf{FinProb},\,-/\mathsf{FinProb})$
is given by $(!,h)$:\\
\[\begin{tikzcd}
	{\mathsf{FinProb}} && {*} \\
	\\
	&& {\mathsf{MonCat}_\ell}
	\arrow[""{name=0, anchor=center, inner sep=0}, "{-/\mathsf{FinProb}}"', from=1-1, to=3-3]
	\arrow["{!}", from=1-1, to=1-3]
	\arrow["{(\mathsf{FinProb},\succsim_{01})}", from=1-3, to=3-3]
	\arrow["h"'{pos=0.4}, shorten <=6pt, shorten >=3pt, Rightarrow, from=0, to=1-3]
\end{tikzcd}\] where:
\begin{itemize}
\item $(\mathsf{FinProb},\succsim_{01})\in\mathsf{COrdCMon}$ is a cancellative
ordered commutative monoid,\footnote{Technically, we have to consider the quotient $\mathsf{FinProb}/\cong$
modulo the isomorphism relation $\cong$ in $\mathsf{FinProb}$ in
order to make $\succsim_{01}$ a partial order, though we omit the
quotient for the sake of notational simplicity.} where the monoid operation is the tensor product over $\mathsf{FinProb}$
(product distribution), and for $P,Q\in\mathsf{FinProb}$, $P\succsim_{01}Q$
if and only if
\[
P\cong Q\;\mathrm{or}\;\left(H_{0}(P)\ge H_{0}(Q)\;\mathrm{and}\;H_{1}(P)>H_{1}(Q)\right).
\]
\item $h:-/\mathsf{FinProb}\Rightarrow\Delta_{(\mathsf{FinProb},\succsim_{01})}$
has component $h_{P}$ mapping a random variable $X:P\to Q$ to its
distribution $Q$. 
\end{itemize}
Equivalently, the universal baseless $\mathsf{COrdCMon}$-entropy
is the obvious functor $\mathsf{FinProb}\to(\mathsf{FinProb},\succsim_{01})$.
\end{thm}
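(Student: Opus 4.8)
The plan is to verify directly that the pair consisting of $\mathsf{W}:=(\mathsf{FinProb}/\!\cong,\,\succsim_{01})$ and the baseless entropy $H\colon\mathsf{FinProb}\to\mathsf{W}$, $H(P)=P$, is the reflection of $(\mathsf{FinProb},\,-/\mathsf{FinProb})$ into $\mathsf{COrdCMon}$. By Proposition~\ref{prop:baseless} and the discussion after Definition~\ref{def:baseless-1}, working with the baseless $H$ is equivalent to exhibiting $(!,h)$ as universal, so I would split the argument into three parts: (A) $\mathsf{W}\in\mathsf{COrdCMon}$; (B) $H$ is a baseless $\mathsf{COrdCMon}$-entropy (soundness, i.e.\ $H$ really does satisfy monotonicity, additivity and subadditivity into $\mathsf{W}$); and (C) $H$ enjoys the universal property of Definition~\ref{def:baseless-1} (achievability).

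For (A), the monoid operation is the product distribution, which is commutative with unit the singleton space, so I only need the order axioms and cancellativity. Reflexivity, antisymmetry and transitivity of $\succsim_{01}$, together with translation-invariance ($P\succsim_{01}Q\Rightarrow P\otimes R\succsim_{01}Q\otimes R$), follow from the additivity $H_i(P\otimes R)=H_i(P)+H_i(R)$ for $i\in\{0,1\}$ plus a short case split on whether each clause is an equality or is strict. The one genuinely nontrivial point is cancellativity, which in the isomorphism clause reduces to: $P\otimes R\cong Q\otimes R$ implies $P\cong Q$. I would prove this by encoding the multiset of probabilities of a distribution as an element of the integral group ring $\mathbb{Z}[(\mathbb{R},+)]$ via $P\mapsto\sum_i[\log p_i]$; the product of distributions becomes the product in this ring, and a leading-term argument (take the largest exponent under any linear ordering of $\mathbb{R}$) shows the ring is an integral domain, so the nonzero factor corresponding to $R$ cancels, giving $P\cong Q$.

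For (B), since $H(P)=P$, additivity \eqref{eq:baseless_additive} is immediate. Monotonicity \eqref{eq:baseless_monotone} is the assertion that a morphism $P\to Q$ gives $P\succsim_{01}Q$: such a coarsening satisfies $H_0(P)\ge H_0(Q)$ and $H_1(P)\ge H_1(Q)$, and if $H_1(P)=H_1(Q)$ then the conditional entropy $H_1(P)-H_1(Q)$ vanishes, forcing the map to be injective and hence $P\cong Q$. Subadditivity \eqref{eq:baseless_subadditive} is that every coupling $C$ of $A,B$ satisfies $A\otimes B\succsim_{01}C$: again $H_i(A\otimes B)\ge H_i(C)$, and equality in $H_1$ holds exactly when $A,B$ are independent under $C$, i.e.\ $C\cong A\otimes B$. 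Parts (A) and (B) together say precisely that $\succsim_{01}$ is respected by all the generating relations, i.e.\ that the reflection order $\succsim$ is contained in $\succsim_{01}$.

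For (C), given any baseless $\mathsf{COrdCMon}$-entropy $H'\colon\mathsf{FinProb}\to\mathsf{W}'$ I set $F(P)=H'(P)$; functoriality of $H'$ makes $F$ well defined on $\cong$-classes, additivity makes it a monoid homomorphism, and $FH=H'$ forces uniqueness since $H$ is the identity on objects. The crux is order-preservation: $P\succsim_{01}Q$ must imply $H'(P)\ge H'(Q)$. I would reduce this to producing, for some catalyst $Z$, a \emph{finite} chain of coarsenings and couplings from $P\otimes Z$ to $Q\otimes Z$; each coarsening step yields an inequality in $\mathsf{W}'$ by monotonicity and each coupling step by subadditivity (through additivity), so the chain gives $H'(P)+H'(Z)\ge H'(Q)+H'(Z)$, whence $H'(P)\ge H'(Q)$ by cancellativity of $\mathsf{W}'$. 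This last step is the main obstacle, and it is where the majorization machinery enters: because $\mathsf{W}'$ is only cancellative (not integrally closed), I cannot use asymptotic or many-copy arguments that deliver merely $nH'(P)\ge nH'(Q)$, so I need an exact, single-copy reduction consuming a catalyst that cancels precisely. The strict inequality $H_1(P)>H_1(Q)$ in the definition of $\succsim_{01}$ is exactly what supplies the slack making such a finite reduction possible, and the existence of this catalytic reduction whenever $H_0(P)\ge H_0(Q)$ and $H_1(P)>H_1(Q)$ is the content of the generalized-majorization characterization of M\"uller and Pastena \cite{muller2016generalization}; the remaining work is to translate their vector-level construction into an explicit sequence of measure-preserving maps and couplings in $\mathsf{FinProb}$ together with a suitable catalyst $Z$.
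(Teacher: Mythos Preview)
Your outline matches the paper's: check that $\iota$ is a baseless entropy, then show every baseless $\mathsf{COrdCMon}$-entropy $H'$ factors through it, with the crux being that $P\succsim_{01}Q$ forces $H'(P)\ge H'(Q)$, and M\"uller--Pastena supplying the catalytic step. Your part (A) is actually more careful than the paper, which never explicitly verifies cancellativity of $(\mathsf{FinProb}/\!\cong,\otimes)$; your group-ring argument is a clean way to do it.

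The one place you are underestimating the work is the phrase ``translate their vector-level construction into an explicit sequence of measure-preserving maps and couplings.'' M\"uller--Pastena (Theorem~\ref{thm:h0h1_majorize}) does \emph{not} output such a chain: it outputs a \emph{majorization} relation
\[
P\otimes\mathrm{Cod}_R(X_1\otimes X_2\otimes X_3)\;\preceq\;Q\otimes\mathrm{Cod}_R(X_1)\otimes\mathrm{Cod}_R(X_2)\otimes\mathrm{Cod}_R(X_3),
\]
and a majorization is neither a coarsening nor a coupling. To turn $\preceq$ into an inequality for an arbitrary $H'$ you must first prove that every baseless $\mathsf{COrdCMon}$-entropy is Schur-concave. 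The paper isolates this as Lemma~\ref{lem:schur}, and its proof is the main technical novelty: one builds auxiliary random variables $\tilde X,\tilde Y,Z,W$ over a common space, chains monotonicity, additivity and subadditivity, and uses cancellativity of $\mathsf{W}'$ to extract the inequality $H'(P)+H'(Q)\le H'(\lambda P+(1-\lambda)Q)+H'((1-\lambda)P+\lambda Q)$, from which a single Robin Hood transfer (and hence all of majorization) follows. Once Schur concavity is in hand, the paper's Lemma~\ref{lem:finprob_ord01} combines it with M\"uller--Pastena and one application of subadditivity (to pass from the joint $X_1\otimes X_2\otimes X_3$ to the product of marginals) plus cancellation to conclude $H'(P)\ge H'(Q)$. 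So rather than trying to rewrite M\"uller--Pastena's output as a chain of maps, you should insert Schur concavity as a separate lemma; the rest of your plan then goes through exactly as in the paper.
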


\medskip{}

Theorem \ref{thm:finprob_ordcmon} is unsatisfactory since the ``entropy''
of a distribution $P\in\mathsf{FinProb}$ is merely given as $P$
itself in a space with a new ordering $\succsim_{01}$. We expect
the entropy to be given as real numbers instead. The issue is that
$\succsim_{01}$ is ``too fine'' and is unable to identify distributions
that should have the same entropies. In order to recover the familiar
notions of entropy, we have to impose that the codomain of the entropy
is integrally closed (Definition \ref{def:icocm}), i.e., it is in
$\mathsf{IcOrdCMon}$. The integral closedness condition will make
the partial order ``coarser'' so as to identify distributions having
the same entropies as being equivalent under this partial order.

We now present one of our main results, which shows that the universal
$\mathsf{IcOrdCMon}$-entropy of $\mathsf{FinProb}$ is the pairing
of the Shannon entropy $H_{1}$ and the Hartley entropy $H_{0}$.
This allows us to recover two familiar notions of entropy. The proof
will be given later in this section.

\medskip{}

\begin{thm}
\label{thm:finprob_ic}The universal $\mathsf{IcOrdCMon}$-entropy
of the MonSiMon category $(\mathsf{FinProb},\,-/\mathsf{FinProb})$
is given by $(!,(h_{0},h_{1}))$:\\
\[\begin{tikzcd}
	{\mathsf{FinProb}} && {*} \\
	\\
	&& {\mathsf{MonCat}_\ell}
	\arrow[""{name=0, anchor=center, inner sep=0}, "{-/\mathsf{FinProb}}"', from=1-1, to=3-3]
	\arrow["{!}", from=1-1, to=1-3]
	\arrow["{(H_{0},H_{1})(\mathsf{FinProb})}", from=1-3, to=3-3]
	\arrow["{(h_0,h_1)}"'{pos=0.4}, shorten <=6pt, shorten >=3pt, Rightarrow, from=0, to=1-3]
\end{tikzcd}\]where 
\begin{align*}
(H_{0},H_{1})(\mathsf{FinProb}) & :=\left\{ (H_{0}(P),H_{1}(P)):\,P\in\mathsf{FinProb}\right\} \\
 & =\left\{ (a,b)\in(\log\mathbb{N})\times\mathbb{R}_{>0}:\,a\ge b\right\} \cup\{(0,0)\}\\
 & \subseteq\mathbb{R}^{2}\in\mathsf{IcOrdCMon}
\end{align*}
is the range of $(H_{0},H_{1})$ equipped with the product order over
$\mathbb{R}^{2}$, and $(h_{0},h_{1}):-/\mathsf{FinProb}\Rightarrow\Delta_{(H_{0},H_{1})(\mathsf{FinProb})}$
has component $(h_{0,P},h_{1,P})$ mapping a random variable $X:P\to Q$
to the pair formed by its Hartley and Shannon entropies $(H_{0}(Q),H_{1}(Q))\in\mathbb{R}^{2}$.
Equivalently, the universal baseless $\mathsf{IcOrdCMon}$-entropy
is the pairing $(H_{0},H_{1}):\mathsf{FinProb}\to(H_{0},H_{1})(\mathsf{FinProb})$.
\end{thm}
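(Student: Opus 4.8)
The plan is to obtain the universal $\mathsf{IcOrdCMon}$-entropy as the $\mathsf{IcOrdCMon}$-reflection of the universal $\mathsf{COrdCMon}$-entropy already computed in Theorem \ref{thm:finprob_ordcmon}. Since $\mathsf{IcOrdCMon}$ is a reflective subcategory of $\mathsf{COrdCMon}$ (Section \ref{sec:subcats}) and reflectors compose, the reflection morphism of Definition \ref{def:gcm_entropy} factors through the $\mathsf{COrdCMon}$-reflection; by Proposition \ref{prop:baseless} it then suffices to work with baseless entropies. Concretely, I would start from $(\mathsf{FinProb},\succsim_{01})$ and apply the $\mathsf{IcOrdCMon}$-reflection described after Definition \ref{def:icocm}: form the smallest integrally closed preorder $\succeq$ on $\mathsf{FinProb}$ containing $\succsim_{01}$, then quotient by the induced equivalence $\sim$. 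Here $nP$ denotes the $n$-fold tensor power, i.e. the $n$-fold sum in the commutative monoid, matching the notation $nx$ of Definition \ref{def:icocm}. The whole statement then reduces to identifying $\succeq$ and the resulting quotient.

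The key claim is that $\succeq$ coincides with the pullback of the product order under the pairing, i.e. $P\succeq Q$ iff $H_0(P)\ge H_0(Q)$ and $H_1(P)\ge H_1(Q)$; write $\succeq'$ for this relation. One inclusion is routine: $\succeq'$ contains $\succsim_{01}$ (both disjuncts defining $\succsim_{01}$ imply $H_0(P)\ge H_0(Q)$ and $H_1(P)\ge H_1(Q)$), and $\succeq'$ is integrally closed because it is the preimage under $(H_0,H_1)$ of the order on $\mathbb{R}^2$, which is integrally closed (Archimedean); hence $\succeq \subseteq \succeq'$.

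The crux is the reverse inclusion $\succeq' \subseteq \succeq$, which is where integral closedness does the real work of collapsing the overly fine order $\succsim_{01}$. Suppose $H_0(P)\ge H_0(Q)$ and $H_1(P)\ge H_1(Q)$. If $H_1(P)>H_1(Q)$ then $P\succsim_{01}Q$ directly. The remaining case $H_1(P)=H_1(Q)$ (with possibly $H_0(P)>H_0(Q)$) is exactly the one $\succsim_{01}$ fails to see, and I would handle it with the following trick: let $A$ be a fair coin, so $H_0(A)=H_1(A)=\log 2>0$, and let $B$ be a point mass, so $H_0(B)=H_1(B)=0$. Using additivity of $H_0,H_1$ under the tensor product, for every $n\in\mathbb{Z}_{>0}$ one has $H_0(nP+A)=nH_0(P)+\log 2\ge nH_0(Q)=H_0(nQ+B)$ and $H_1(nP+A)=nH_1(P)+\log 2> nH_1(Q)=H_1(nQ+B)$, so $nP+A\succsim_{01}nQ+B$, whence $nP+A\succeq nQ+B$. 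Integral closedness of $\succeq$ then forces $P\succeq Q$, establishing $\succeq'=\succeq$. I expect this to be the main obstacle, since it is the only place the definition of integral closedness is genuinely invoked, and it is what upgrades the strict $H_1$-inequality in $\succsim_{01}$ to a non-strict one.

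Finally I would verify that the quotient is exactly the stated object. Since $P\sim Q$ iff $(H_0(P),H_1(P))=(H_0(Q),H_1(Q))$, the quotient monoid is in bijection with the range $(H_0,H_1)(\mathsf{FinProb})$ carrying the product order; a short computation identifies this range with $\{(a,b)\in(\log\mathbb{N})\times\mathbb{R}_{>0}:a\ge b\}\cup\{(0,0)\}$, the attainable pairs for support size $k\ge 2$ being $\{\log k\}\times(0,\log k]$ by continuity of $H_1$ on the simplex under the strict-positivity convention, together with $(0,0)$ for the point mass. It remains to confirm that this object lies in $\mathsf{IcOrdCMon}$: it is a submonoid of $\mathbb{R}^2$ closed under addition, and, carrying the induced product order, it inherits integral closedness from $\mathbb{R}^2$. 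Together with the monotonicity, additivity and subadditivity of $(H_0,H_1)$, which make it a baseless $\mathsf{IcOrdCMon}$-entropy, this shows $(H_0,H_1)$ is the reflection morphism and hence the universal baseless $\mathsf{IcOrdCMon}$-entropy, completing the proof.
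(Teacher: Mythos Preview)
Your proof is correct and follows essentially the same route as the paper. The paper verifies the universal property of $(H_0,H_1)$ directly rather than phrasing it as computing the $\mathsf{IcOrdCMon}$-reflection of $(\mathsf{FinProb},\succsim_{01})$, but the substantive step is identical: given $H_0(P)\ge H_0(Q)$ and $H_1(P)\ge H_1(Q)$, tensor with a non-degenerate distribution $R$ (your fair coin $A$) so that $P^{\otimes n}\otimes R\succsim_{01}Q^{\otimes n}$ for all $n$, then invoke integral closedness.
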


\medskip{}

If the slightly complicated codomain $(H_{0},H_{1})(\mathsf{FinProb})$
is still not satisfactory, we can consider codomains in $\mathsf{IcOrdAb}$
or $\mathsf{IcOrdVect}$ to further simplify the codomain. The following
results are direct corollaries of Theorem \ref{thm:finprob_ic}.

\medskip{}

\begin{cor}
\label{cor:finprob_ab_vect}For the MonSiMon category $(\mathsf{FinProb},\,-/\mathsf{FinProb})$:
\begin{enumerate}
\item The universal baseless $\mathsf{IcOrdAb}$-entropy is given by $(H_{0},H_{1}):\mathsf{FinProb}\to(\log\mathbb{Q}_{>0})\times\mathbb{R}$.
\item The universal baseless $\mathsf{IcOrdVect}_{\mathbb{Q}}$-entropy
is given by $(H_{0},H_{1}):\mathsf{FinProb}\to(\mathbb{Q}\log\mathbb{Q}_{>0})\times\mathbb{R}$,
where $(\mathbb{Q}\log\mathbb{Q}_{>0})\times\mathbb{R}=\{(a\log b,c):\,a\in\mathbb{Q},b\in\mathbb{Q}_{>0},c\in\mathbb{R}\}$.
\end{enumerate}
\end{cor}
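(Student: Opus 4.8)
The plan is to deduce both statements from Theorem \ref{thm:finprob_ic} together with the elementary principle that reflections compose. Since $\mathsf{IcOrdAb}$ and $\mathsf{IcOrdVect}_{\mathbb{Q}}$ are reflective subcategories of $\mathsf{IcOrdCMon}$ (Section \ref{subsec:cat_of_codomains}) and the universal baseless $\mathsf{V}$-entropy is the composite of the universal entropy with the reflection of its codomain (Definition \ref{def:baseless-1}), it suffices to reflect the codomain $M := (H_0,H_1)(\mathsf{FinProb})$ of the universal $\mathsf{IcOrdCMon}$-entropy successively into $\mathsf{IcOrdAb}$ and then $\mathsf{IcOrdVect}_{\mathbb{Q}}$, and check the resulting reflection morphisms are $(H_0,H_1)$ with the stated codomains. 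The enabling lemma is: if $H\colon\mathsf{C}\to\mathsf{W}$ is the universal baseless $\mathsf{V}$-entropy and $\mathsf{V}'\subseteq\mathsf{V}$ is reflective with reflection $r\colon\mathsf{W}\to r(\mathsf{W})$, then $rH$ is the universal baseless $\mathsf{V}'$-entropy, since any baseless $\mathsf{V}'$-entropy is in particular a baseless $\mathsf{V}$-entropy, hence factors through $H$, and that factorization factors uniquely through $r$ by the universal property of the reflection.

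For part (1), recall (footnote to Definition \ref{def:icocm}) that the Grothendieck group of differences of an integrally closed ordered commutative monoid is again integrally closed, so the reflection of $M$ into $\mathsf{IcOrdAb}$ is simply its Grothendieck group. As $M\subseteq\mathbb{R}^2$ is cancellative with the product order, this Grothendieck group is the generated subgroup $\langle M\rangle\le\mathbb{R}^2$ and the reflection morphism $M\to\langle M\rangle$ is the inclusion. First I would identify $\langle M\rangle$: fixing support size $2$, the elements $(\log 2,b)$ with $b$ ranging over $(0,\log 2]$ (every such value is realized, since $H_1$ attains all of $(0,\log n]$ on support size $n$) produce differences $(0,b_1-b_2)$ filling the interval $\{0\}\times(-\log 2,\log 2)$; a subgroup of $\mathbb{R}$ containing an interval about $0$ is all of $\mathbb{R}$, so $\{0\}\times\mathbb{R}\subseteq\langle M\rangle$. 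Subtracting these from the generators $(\log n,b)$ gives $(\log n,0)$, and the $\log n$ generate $\log\mathbb{Q}_{>0}$ under differences, whence $\langle M\rangle=(\log\mathbb{Q}_{>0})\times\mathbb{R}$. Then I would identify the order: the positive cone $\{x-y:\,x,y\in M,\ x\ge_M y\}$ is contained in $\{(s,r):s\ge0,\ r\ge0\}$ since $M$ carries the product order, and conversely, for such $(s,r)$ with $s=\log(p/q)\in\log\mathbb{Q}_{>0}$ and $p\ge q$, I would exhibit comparable representatives $y=(\log n,b)$ and $x=y+(s,r)$ both lying in $M$, taking $n$ a large multiple of $q$ (so $\log n+s\in\log\mathbb{N}$) and $b>0$ small (so $x$ stays in the region $\{a\in\log\mathbb{N},\,0<c\le a\}$ describing $M$). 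Hence the cone is exactly the product order, it is integrally closed as a subgroup of $(\mathbb{R}^2,\le)$, and the reflection morphism is $(H_0,H_1)\colon\mathsf{FinProb}\to(\log\mathbb{Q}_{>0})\times\mathbb{R}$.

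For part (2), I would apply the reflection $\mathsf{IcOrdAb}\to\mathsf{IcOrdVect}_{\mathbb{Q}}$ of Section \ref{subsec:cat_of_codomains} to $G:=(\log\mathbb{Q}_{>0})\times\mathbb{R}$. The many-copy preorder satisfies $x\succeq y$ iff $n(x-y)\ge0$ for some $n>0$ iff $x\ge y$ (since $n(s,r)\ge0$ iff $s\ge0,\ r\ge0$), so $\succeq$ is the original antisymmetric product order; nothing is collapsed and $G$ is already torsion-free. Tensoring with $\mathbb{Q}$ yields the generated $\mathbb{Q}$-vector space: since $\log\mathbb{Q}_{>0}$ is free abelian on $\{\log p:p\text{ prime}\}$, clearing a common denominator shows every $\mathbb{Q}$-combination of the $\log p$ equals $a\log b$ for some $a\in\mathbb{Q}$, $b\in\mathbb{Q}_{>0}$, so $(\log\mathbb{Q}_{>0})\otimes\mathbb{Q}=\mathbb{Q}\log\mathbb{Q}_{>0}$, while $\mathbb{R}\otimes\mathbb{Q}=\mathbb{R}$. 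The positive cone, the conic hull of the product cone, remains the product cone, and $(\mathbb{Q}\log\mathbb{Q}_{>0})\times\mathbb{R}$ with the product order is Archimedean, hence already integrally closed, so no further quotient arises. The reflection morphism is again $(H_0,H_1)$ into this vector space.

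The two reflection computations are routine; the one step requiring care is the order identification in part (1), namely verifying that the Grothendieck positive cone coincides with the product order. The hard part will be realizing each componentwise-nonnegative element of $\langle M\rangle$ as a difference of two \emph{order-comparable} elements of $M$, which forces one to use both the surjectivity of $H_1$ onto $(0,\log n]$ and the multiplicative (prime-factorization) structure of $\log\mathbb{N}$ in order to keep the representatives inside $M$.
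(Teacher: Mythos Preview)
Your proposal is correct and follows the approach the paper intends: the paper states this result as a ``direct corollary'' of Theorem~\ref{thm:finprob_ic} without spelling out any proof, and your argument---compose the universal $\mathsf{IcOrdCMon}$-entropy with the successive reflections of its codomain into $\mathsf{IcOrdAb}$ and $\mathsf{IcOrdVect}_{\mathbb{Q}}$---is exactly the mechanism implied by the reflective-subcategory setup of Section~\ref{subsec:cat_of_codomains}. Your explicit computations of the Grothendieck group, the positive cone, and the $\mathbb{Q}$-tensor are all sound and supply the details the paper leaves to the reader.
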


\medskip{}

These universal entropies can be summarized by the following diagram,
where the categories on the left are the ``categories of codomains''
of entropies (see Section \ref{subsec:cat_of_codomains}), and the
categories on the right are elements of the corresponding categories
on the left. The arrow $(H_{0},H_{1})\circ\mathrm{Cod}$ is the universal
$\mathsf{IcOrdCMon}$-entropy of $(\mathsf{FinProb},\,-/\mathsf{FinProb})$
in Theorem \ref{thm:finprob_ic}.\footnote{This notation is justified by the one-to-one correspondence $Uh=UH\circ\mathrm{Cod}$
in Proposition \ref{prop:baseless}, where we omit the forgetful functor
$U$ for brevity.} We write $(\mathsf{FinProb},\,-/\mathsf{FinProb})$ as $-/\mathsf{FinProb}$
for brevity. Here $G$ denotes the reflection of $(\mathsf{FinProb},\succsim_{01})$
in $\mathsf{OrdAb}$, and $V$ denotes the reflection of $(\mathsf{FinProb},\succsim_{01})$
in $\mathsf{OrdVect}_{\mathbb{Q}}$ (see Section \ref{subsec:cat_of_codomains}).
The ``$?$'' is the codomain of the universal $\mathsf{OrdCMon}$
entropy of $(\mathsf{FinProb},\,-/\mathsf{FinProb})$, which is guaranteed
to exist but intractable and uninteresting, and hence omitted.\footnote{We may combine the categories on the left and those on the right into
a single gigantic diagram, by noting that ``$-/\mathsf{FinProb}\in\mathsf{MonSiMonCat}$''
can be written as ``$*\stackrel{-/\mathsf{FinProb}}{\longrightarrow}\mathsf{MonSiMonCat}$'',
and turning all the morphisms on the right into 2-cells. We prefer
not to draw the diagram this way since it would be quite unreadable.}\\
\[\begin{tikzcd}[column sep=scriptsize]
	{\mathsf{MonSiMonCat}} && \ni & {-/\mathsf{FinProb}} \\
	{\mathsf{OrdCMon}} && \ni & {?} \\
	{\mathsf{COrdCMon}} & {\mathsf{IcOrdCMon}} & \ni & {(\mathsf{FinProb},\succsim_{01})} & {(H_{0},H_{1})(\mathsf{FinProb})} \\
	{\mathsf{OrdAb}} & {\mathsf{IcOrdAb}} & \ni & G & {(\log\mathbb{Q}_{>0})\times\mathbb{R}} \\
	{\mathsf{OrdVect}_{\mathbb{Q}}} & {\mathsf{IcOrdVect}_{\mathbb{Q}}} & \ni & V & {(\mathbb{Q}\log\mathbb{Q}_{>0})\times\mathbb{R}}
	\arrow[hook, from=2-1, to=1-1]
	\arrow[hook, from=3-2, to=3-1]
	\arrow[hook, from=3-1, to=2-1]
	\arrow[hook, from=5-2, to=4-2]
	\arrow[hook, from=4-2, to=3-2]
	\arrow[hook, from=4-1, to=3-1]
	\arrow[hook, from=5-1, to=4-1]
	\arrow[hook, from=4-2, to=4-1]
	\arrow[hook, from=5-2, to=5-1]
	\arrow[from=1-4, to=2-4]
	\arrow[from=2-4, to=3-4]
	\arrow[from=3-4, to=4-4]
	\arrow[from=4-4, to=5-4]
	\arrow["{(H_0,H_1)}", from=3-4, to=3-5]
	\arrow[from=4-4, to=4-5]
	\arrow[from=5-4, to=5-5]
	\arrow[hook, from=3-5, to=4-5]
	\arrow[hook, from=4-5, to=5-5]
	\arrow["{(H_{0},H_{1})\circ\mathrm{Cod}}", from=1-4, to=3-5]
\end{tikzcd}\]

Before we present the proofs of Theorem \ref{thm:finprob_ordcmon}
and Theorem \eqref{thm:finprob_ic}, we review the concept of majorization
of probability distributions. A probability mass function $Q\in\mathsf{FinProb}$
\emph{majorizes} another probability mass function $P\in\mathsf{FinProb}$,
written as $P\preceq Q$, if $\max_{A:\,|A|\le k}\sum_{x\in A}Q(x)\ge\max_{A:\,|A|\le k}\sum_{x\in A}P(x)$
for all $k\in\mathbb{Z}_{>0}$, i.e., the sum of the largest $k$
entries of $Q$ is larger than or equal to the sum of the largest
$k$ entries of $P$ \cite{marshall1979inequalities}. 

We say that a functor $H:\mathsf{FinProb}\to\mathsf{W}$ (where $\mathsf{W}\in\mathsf{OrdCMon}$)
is \emph{Schur-concave} if $P\preceq Q$ implies $H(P)\ge H(Q)$ \cite{marshall1979inequalities},
i.e., $H$ is an order-preserving map from $(\mathsf{FinProb},\preceq)$
to $(\mathsf{W},\ge)$. Note that Shannon entropy and Hartley entropy
are Schur-concave.

The following result is a generalization of \cite[Lemma 1]{aczel1974shannon}
to arbitrary cancellative ordered commutative monoid.

\medskip{}

\begin{lem}
\label{lem:schur}If $H:\mathsf{FinProb}\to\mathsf{W}$ (where $\mathsf{W}\in\mathsf{COrdCMon}$)
is a baseless $\mathsf{COrdCMon}$-entropy (i.e., it satisfies monotonicity
\eqref{eq:baseless_monotone}, additivity \eqref{eq:baseless_additive}
and subadditivity \eqref{eq:baseless_subadditive}), then $H$ is
Schur-concave.
\end{lem}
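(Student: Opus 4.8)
The plan is to prove Schur-concavity by reducing the majorization relation $P \preceq Q$ to a finite chain of elementary steps and showing that $H$ does not decrease along each step. Recall the Hardy--Littlewood--P\'olya / Muirhead theorem: after padding the shorter probability vector with zeros so that $P$ and $Q$ have equal length, $P \preceq Q$ holds if and only if $P$ can be obtained from $Q$ by a finite sequence of \emph{T-transforms} (Robin Hood transfers), each acting on two coordinates $(a,b)$ with $a \ge b$ by replacing them with $(a-t,\, b+t)$, $0 \le t \le a-b$. Permutations of labels are isomorphisms in $\mathsf{FinProb}$, and since $H$ is a functor into a (skeletal) posetal category it is automatically invariant under them; so it suffices to verify $H(\text{after}) \ge H(\text{before})$ for a single nontrivial T-transform and then chain the resulting inequalities by transitivity of $\ge$ in $\mathsf{W}$.

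The heart of the argument is a \emph{randomized-relabeling} lemma, which is where additivity, subadditivity and cancellativity combine. Let $Q \in \mathsf{FinProb}$ with support $T$, let $\Sigma$ be a finite set of bijections of $T$, let $(c_\sigma)_{\sigma \in \Sigma}$ be nonnegative reals summing to $1$, and set $P = \sum_{\sigma} c_\sigma\, \sigma_* Q$, a mixture of relabelings of $Q$. I claim $H(P) \ge H(Q)$. To see this, let $S$ be distributed as $(c_\sigma)$ over $\Sigma$, let $Y \sim Q$ be independent of $S$, and put $X = S(Y)$. Then $X \sim P$, while the map $(s,y) \mapsto (s, s(y))$ is a measure-preserving bijection exhibiting an isomorphism $(S,X) \cong (S,Y)$ in $\mathsf{FinProb}$; since $(S,Y)$ is the product $S \otimes Q$, additivity \eqref{eq:baseless_additive} gives $H(S,X) = H(S) + H(Q)$. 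Subadditivity \eqref{eq:baseless_subadditive} applied to the two projections of $(S,X)$ then yields $H(S) + H(P) \ge H(S,X) = H(S) + H(Q)$, and cancelling $H(S)$ (legitimate because $\mathsf{W} \in \mathsf{COrdCMon}$ is cancellative) gives $H(P) \ge H(Q)$. The crucial feature is that the mixing weights $c_\sigma$ may be arbitrary real numbers: they are absorbed into the auxiliary variable $S$, whose entropy is then removed by cancellation, so no convexity or continuity hypothesis on $H$ is needed.

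With the lemma in hand I would dispose of a single T-transform by cases. A transfer between two strictly positive atoms, $(a,b) \mapsto (a-t,\, b+t)$ with $a > b \ge 0$ and $0 < t < a$, is exactly the mixture $P = (1-\lambda)Q + \lambda\, \Pi_* Q$, where $\Pi$ transposes the two coordinates and $\lambda = t/(a-b) \in [0,1]$; the randomized-relabeling lemma applies verbatim and gives $H(P) \ge H(Q)$. A T-transform acting on a zero-padded coordinate, $(a,0) \mapsto (a-t,\, t)$ with $0 < t < a$, instead \emph{splits} one atom into two and thus produces a refinement of the previous distribution; the corresponding merging map is a morphism in $\mathsf{FinProb}$, so monotonicity \eqref{eq:baseless_monotone} gives $H(\text{after}) \ge H(\text{before})$ directly. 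The remaining possibilities ($t=0$, or the swap $(a,0)\mapsto(0,a)$) are identities up to isomorphism. Chaining these inequalities along the T-transform sequence from $Q$ to $P$ yields $H(P) \ge H(Q)$, which is Schur-concavity.

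The main obstacle---and the point distinguishing this argument from the continuity-based route of Baez--Fritz--Leinster---is simulating the doubly stochastic mixing underlying majorization using only the monoidal-categorical axioms. The randomized-relabeling lemma is the device that achieves this: it trades a convex combination for a genuine auxiliary random variable $S$ and then eliminates $S$ by cancellativity, which is precisely why cancellativity (built into $\mathsf{COrdCMon}$) is the right hypothesis and why arbitrary real transfer parameters cause no difficulty. The only other point requiring care is the bookkeeping for differing support sizes, which the splitting case handles through monotonicity rather than through the mixing lemma.
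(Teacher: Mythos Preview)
Your argument is correct and somewhat cleaner than the paper's. Both proofs reduce Schur-concavity to the fact that a single Robin Hood transfer does not decrease $H$, and both exploit cancellativity to remove an auxiliary random variable. The difference lies in how that single-transfer inequality is obtained. The paper follows the Acz\'el--Forte--Ng route: it builds four auxiliary variables $\tilde X,\tilde Y,Z,W$ over a space carrying an event $E$ of probability $\lambda$, and from additivity, monotonicity and subadditivity derives the symmetric two-distribution inequality
\[
H(P)+H(Q)\ \le\ H(\lambda P+(1-\lambda)Q)+H(\lambda Q+(1-\lambda)P),
\]
which is then specialised to a carefully chosen pair $(P,Q)$ (with $Q$ already a partial transfer of $P$) so that one term on each side cancels and the Robin Hood inequality drops out. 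Your randomized-relabeling lemma bypasses this detour entirely: since a Robin Hood transfer on two positive atoms is literally $(1-\lambda)\,Q+\lambda\,\Pi_*Q$ with $\Pi$ a transposition, one application of the lemma with $\Sigma=\{\mathrm{id},\Pi\}$ and $S\sim(1-\lambda,\lambda)$ gives $H(\text{after})\ge H(\text{before})$ in a single step. Your lemma is also a more reusable statement (it applies to any finite family of bijections, not just a transposition). Two small points of bookkeeping you should tidy up: in the ``strictly positive atoms'' case the correct range is $0\le t\le a-b$ (equivalently $\lambda=t/(a-b)\in[0,1]$), not $0<t<a$; and you should remark that for strictly positive $P,Q$ with $P\preceq Q$ one necessarily has $|\mathrm{supp}(Q)|\le|\mathrm{supp}(P)|$, so only $Q$ ever needs zero-padding and no T-transform in the chain can annihilate a positive atom, which is what makes your two-case analysis exhaustive.
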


\medskip{}

\begin{proof}
Consider distributions $P,Q\in\mathsf{FinProb}$ with disjoint supports,
and $\lambda\in[0,1]$. Let $E$ be an event with $\mathbb{P}(E)=\lambda$.
Let $X,Y$ be two independent random variables with $X\sim P$ and
$Y\sim Q$, independent of $E$. Assume that $\phi$ is a symbol not
in the support of $P$ or $Q$. Define the random variables
\[
\tilde{X}:=\begin{cases}
X & \mathrm{if}\;E,\\
\phi & \mathrm{if}\;E^{c},
\end{cases}\;\;\tilde{Y}:=\begin{cases}
Y & \mathrm{if}\;E,\\
\phi & \mathrm{if}\;E^{c},
\end{cases}
\]
\[
Z:=\begin{cases}
X & \mathrm{if}\;E,\\
Y & \mathrm{if}\;E^{c},
\end{cases}\;\;W:=\begin{cases}
Y & \mathrm{if}\;E,\\
X & \mathrm{if}\;E^{c}.
\end{cases}
\]
For example, $\tilde{X}$ is a random variable with $\tilde{X}=X$
if the event $E$ occurs, and $\tilde{X}=\phi$ when $E$ does not
occur. Write $H(X)$ for the value of $H$ evaluated at the distribution
of $X$. We have
\begin{align}
H(\tilde{X})+H(Y) & \stackrel{(a)}{=}H(\tilde{X},Y)\nonumber \\
 & \stackrel{(b)}{\le}H(Z,\tilde{Y})\nonumber \\
 & \stackrel{(c)}{\le}H(Z)+H(\tilde{Y}),\label{eq:schur_line1}
\end{align}
where (a) is by the additivity property of the baseless entropy $H$
since $\tilde{X}$ and $Y$ are independent, (b) is by the monotonicity
property of $H$ since $(\tilde{X},Y)$ is a function of $(Z,\tilde{Y})$
(we have $(\tilde{X},Y)=(Z,\tilde{Y})$ if $\tilde{Y}\neq\phi$, $(\tilde{X},Y)=(\phi,Z)$
if $\tilde{Y}=\phi$), and (c) is by the subadditivity property of
$H$. Similarly, we have
\begin{equation}
H(\tilde{Y})+H(X)\le H(W)+H(\tilde{X}).\label{eq:schur_line2}
\end{equation}
Adding \eqref{eq:schur_line1} and \eqref{eq:schur_line2}, and cancelling
$H(\tilde{X})$ and $H(\tilde{Y})$ since $\mathsf{W}\in\mathsf{OrdCMon}$
is cancellative, we have
\[
H(X)+H(Y)\le H(Z)+H(W).
\]
Equivalently,
\[
H(P)+H(Q)\le H(\lambda P+(1-\lambda)Q)+H(\lambda Q+(1-\lambda)P),
\]
where $\lambda P+(1-\lambda)Q$ denotes the mixture distribution of
$P$ and $Q$ with weights $\lambda$ and $1-\lambda$. Applying this
on $P=[p_{1},\ldots,p_{n}]$ (written as a probability vector) and
\[
Q:=\left[\frac{\lambda}{1+\lambda}p_{1}+\frac{1}{1+\lambda}p_{2},\,\frac{1}{1+\lambda}p_{1}+\frac{\lambda}{1+\lambda}p_{2},\,p_{3},\ldots,p_{n}\right],
\]
we have
\begin{align*}
H(P)+H(Q) & \le H\left(\left[\frac{2\lambda}{1+\lambda}p_{1}+\frac{1-\lambda}{1+\lambda}p_{2},\,\frac{1-\lambda}{1+\lambda}p_{1}+\frac{2\lambda}{1+\lambda}p_{2},\,p_{3},\ldots,p_{n}\right]\right)\\
 & \;\;\;+H\left(\left[\frac{1}{1+\lambda}p_{1}+\frac{\lambda}{1+\lambda}p_{2},\,\frac{\lambda}{1+\lambda}p_{1}+\frac{1}{1+\lambda}p_{2},\,p_{3},\ldots,p_{n}\right]\right).
\end{align*}
Note that $H$ is invariant under permutation of entries. Cancelling
$H(Q)$ on both sides, we have
\begin{align*}
H(P) & \le H\left(\left[\frac{2\lambda}{1+\lambda}p_{1}+\frac{1-\lambda}{1+\lambda}p_{2},\,\frac{1-\lambda}{1+\lambda}p_{1}+\frac{2\lambda}{1+\lambda}p_{2},\,p_{3},\ldots,p_{n}\right]\right).
\end{align*}
Observe that the right hand side represents a ``Robin Hood transfer''
\cite{marshall1979inequalities}. Since $P\preceq Q$ if and only
if we can obtain $P$ from $Q$ via a finite sequence of Robin Hood
transfers \cite{marshall1979inequalities}, we have $H(P)\ge H(Q)$
whenever $P\preceq Q$.
\end{proof}
\medskip{}

We will also utilize the result in \cite[Theorem 1]{muller2016generalization},
stated using the notations in this paper.

\medskip{}

\begin{thm}
[\cite{muller2016generalization}]\label{thm:h0h1_majorize}If $P,Q\in\mathsf{FinProb}$
satisfies $H_{0}(P)\ge H_{0}(Q)$ and $H_{1}(P)>H_{1}(Q)$, then there
exists $R\in\mathsf{FinProb}$ and $X_{1},X_{2},X_{3}\in N(R)$ satisfying
\[
P\otimes\mathrm{Cod}_{R}(X_{1}\otimes X_{2}\otimes X_{3})\preceq Q\otimes\mathrm{Cod}_{R}(X_{1})\otimes\mathrm{Cod}_{R}(X_{2})\otimes\mathrm{Cod}_{R}(X_{3}).
\]
Recall that $\mathrm{Cod}_{R}(X_{1})$ is the distribution of the
random variable $X_{1}$ in the probability space $R$, $X_{1}\otimes X_{2}\otimes X_{3}\in N(R)$
is the joint random variable ``$(X_{1},X_{2},X_{3})$'', and $P\otimes\mathrm{Cod}_{R}(\cdots)$
is the product distribution.
\end{thm}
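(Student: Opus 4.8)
The plan is to reduce the statement to the theory of catalytic (``trumping'') majorization and to construct the required triple $(X_1,X_2,X_3)$ as the catalyst data. First I would record the necessity heuristic as a guide. Writing $A_i=\mathrm{Cod}_R(X_i)$ and $A_{123}=\mathrm{Cod}_R(X_1\otimes X_2\otimes X_3)$, and using that $H_0,H_1$ are Schur-concave and additive under $\otimes$, the desired relation $P\otimes A_{123}\preceq Q\otimes A_1\otimes A_2\otimes A_3$ forces $H_0(P)-H_0(Q)\ge \sum_i H_0(A_i)-H_0(A_{123})$ and $H_1(P)-H_1(Q)\ge \sum_i H_1(A_i)-H_1(A_{123})$, where the right-hand sides are the Hartley- and Shannon-total-correlations of the triple and are $\ge 0$. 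Thus the hypotheses $H_0(P)\ge H_0(Q)$ and $H_1(P)>H_1(Q)$ are exactly the obstructions that survive, and the content of the theorem is that they suffice once we are free to choose $R$ and the $X_i$.

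For sufficiency I would work through the Rényi-entropy characterization of majorization. Recall that $x\preceq y$ requires $H_\alpha(x)\ge H_\alpha(y)$ for every order $\alpha\in[0,\infty]$, with additivity $H_\alpha(P\otimes A_{123})=H_\alpha(P)+H_\alpha(A_{123})$. The crucial device is that the triple's joint $A_{123}$ may keep the prescribed marginals $A_1,A_2,A_3$ while having an essentially free correlation profile: by anti-correlating non-uniform marginals one can even make $\max A_{123}<\prod_i \max A_i$ (this already happens for two suitable binary variables), so the deficit $\sum_i H_\alpha(A_i)-H_\alpha(A_{123})$ can be driven negative at the $\alpha=\infty$ end and tuned across $\alpha\in(1,\infty]$. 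This is precisely what lets us absorb potentially wrong-signed endpoint differences $H_\alpha(P)-H_\alpha(Q)$ that a plain single catalyst cannot repair. Concretely, I would take the marginals large and near-uniform so that $A_1\otimes A_2\otimes A_3$ is a strong (flat, high-entropy) catalyst on the right, and choose the total correlation $\sum_i H_1(A_i)-H_1(A_{123})$ strictly below the available slack $H_1(P)-H_1(Q)$ while keeping $\sum_i H_0(A_i)-H_0(A_{123})$ below $H_0(P)-H_0(Q)$; the flatness makes the $\alpha=1$ integral and the $\alpha=0$ support the binding constraints.

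The main obstacle, and the reason three correlated variables appear rather than a single catalyst, is controlling the entire Rényi spectrum simultaneously, especially the endpoints $\alpha\to 0^+$ and $\alpha\to\infty$, where entropy comparison is \emph{not} implied by $H_0,H_1$ alone. The triple supplies enough tunable degrees of freedom to give every off-$\{0,1\}$ Rényi deficit the correct sign, after which a trumping criterion of Klimesh--Turgut type (or, equivalently, a typical-set/AEP argument on i.i.d.\ copies, where near-uniform typical sets reduce majorization to comparing $2^{nH_0}$-supports and $2^{nH_1}$-cardinalities) delivers the majorization. Rather than reprove this from scratch, I would invoke the construction of M\"uller and Pastena \cite{muller2016generalization}, whose Theorem~1 this statement merely re-expresses in the present notation; the remaining work is then purely translational, namely matching their noise/catalyst operation to the data $(R,X_1,X_2,X_3)$ and checking the strict-versus-nonstrict bookkeeping ($H_1(P)>H_1(Q)$ strict, $H_0(P)\ge H_0(Q)$ non-strict).
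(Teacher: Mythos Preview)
The paper does not prove this theorem at all: it is introduced with ``We will also utilize the result in \cite[Theorem 1]{muller2016generalization}, stated using the notations in this paper,'' and is then used as a black box in the proof of Lemma~\ref{lem:finprob_ord01}. Your proposal correctly identifies this and ultimately does the same thing---invoke M\"uller--Pastena and translate notation---so in that sense you match the paper exactly.

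The heuristic material you include (the necessity direction, the R\'enyi-spectrum discussion, the role of the three correlated variables in tuning the $\alpha\to 0^+$ and $\alpha\to\infty$ endpoints) goes beyond what the paper provides and is reasonable motivation, but it is not a proof and you should not present it as one. In particular, the sketch about ``anti-correlating non-uniform marginals'' and then appealing to ``a trumping criterion of Klimesh--Turgut type'' is far from a complete argument; if you actually wanted to \emph{prove} the statement rather than cite it, substantial work would remain. Since the paper itself is content to cite, your final paragraph is the right disposition: state clearly that this is \cite[Theorem~1]{muller2016generalization} rephrased, and check the dictionary between their catalyst construction and the $(R,X_1,X_2,X_3)$ data here.
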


\medskip{}

Lemma \ref{lem:schur} and Theorem \ref{thm:h0h1_majorize} imply
the following result.

\medskip{}

\begin{lem}
\label{lem:finprob_ord01}If $H:\mathsf{FinProb}\to\mathsf{W}$ (where
$\mathsf{W}\in\mathsf{COrdCMon}$) is a baseless $\mathsf{COrdCMon}$-entropy,
then $H$ is an order-preserving monoid homomorphism from $(\mathsf{FinProb},\succsim_{01})$
to $(\mathsf{W},\ge)$.
\end{lem}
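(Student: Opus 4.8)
The plan is to establish the two halves of the claim---that $H$ is a monoid homomorphism and that it is order-preserving for $\succsim_{01}$---separately, since the first is essentially built into the hypotheses. For the homomorphism part, note that additivity \eqref{eq:baseless_additive} is exactly the multiplicativity condition $H(P\otimes Q)=H(P)+H(Q)$, and strong monoidality also forces $H$ to send the tensor unit of $\mathsf{FinProb}$ to the identity $0$ of $\mathsf{W}$; together these say that $H$ is a monoid homomorphism. So all the substance lies in order-preservation.

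To show $H$ is order-preserving, I would take $P,Q\in\mathsf{FinProb}$ with $P\succsim_{01}Q$ and split into the two cases defining $\succsim_{01}$. If $P\cong Q$, then since $H$ is a functor it preserves this isomorphism, and because $\mathsf{W}$ is a skeletal posetal category we get $H(P)=H(Q)$, so in particular $H(P)\ge H(Q)$. The remaining case is $H_0(P)\ge H_0(Q)$ together with $H_1(P)>H_1(Q)$, and here I would feed these two inequalities into Theorem \ref{thm:h0h1_majorize} to obtain $R\in\mathsf{FinProb}$ and $X_1,X_2,X_3\in N(R)$ with
\[
P\otimes\mathrm{Cod}_R(X_1\otimes X_2\otimes X_3)\preceq Q\otimes\mathrm{Cod}_R(X_1)\otimes\mathrm{Cod}_R(X_2)\otimes\mathrm{Cod}_R(X_3).
\]

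Abbreviating $C_i:=\mathrm{Cod}_R(X_i)$ and $C:=\mathrm{Cod}_R(X_1\otimes X_2\otimes X_3)$, I would then combine three facts. First, since $H$ is Schur-concave by Lemma \ref{lem:schur}, the displayed majorization gives $H(P\otimes C)\ge H(Q\otimes C_1\otimes C_2\otimes C_3)$; expanding both sides by the additivity already established turns this into
\[
H(P)+H(C)\ge H(Q)+H(C_1)+H(C_2)+H(C_3).
\]
Second, applying subadditivity \eqref{eq:baseless_subadditive} twice to $X_1,X_2,X_3\in N(R)$ gives $H(C_1)+H(C_2)+H(C_3)\ge H(C)$. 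Substituting this lower bound into the right-hand side above yields $H(P)+H(C)\ge H(Q)+H(C)$, and cancelling $H(C)$---which is legitimate because $\mathsf{W}\in\mathsf{COrdCMon}$ is cancellative---produces $H(P)\ge H(Q)$, as desired.

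The crux, and the only genuinely nonroutine step, is the interplay engineered by the gadget $(R,X_1,X_2,X_3)$ of Theorem \ref{thm:h0h1_majorize}: additivity and subadditivity pull the auxiliary term $H(C)$ in opposite directions, and the majorization relation is calibrated so that the same correction term $H(C)$ surfaces on both sides and can be cancelled. Cancellativity of $\mathsf{W}$ is indispensable here---it is precisely what lets us discard the auxiliary randomness and isolate the comparison $H(P)\ge H(Q)$---which is why this lemma is stated over $\mathsf{COrdCMon}$ rather than over all of $\mathsf{OrdCMon}$.
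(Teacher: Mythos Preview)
Your proof is correct and follows essentially the same approach as the paper: invoke Theorem \ref{thm:h0h1_majorize}, apply Schur-concavity (Lemma \ref{lem:schur}), expand via additivity, bound via subadditivity, and cancel using cancellativity. The only cosmetic difference is that the paper arranges the chain so as to cancel $H(C_1)+H(C_2)+H(C_3)$ from both sides, whereas you cancel $H(C)$; these are equivalent.
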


\medskip{}

\begin{proof}
$H$ being a homomorphism follows from the additivity property of
$H$. It is left to prove that $H$ is order-preserving. Consider
$P,Q\in\mathsf{FinProb}$, $P\succsim_{01}Q$. If $P\cong Q$, then
clearly $H(P)=H(Q)$. If $H_{0}(P)\ge H_{0}(Q)$ and $H_{1}(P)>H_{1}(Q)$,
by Theorem \ref{thm:h0h1_majorize}, there exists $R\in\mathsf{FinProb}$
and $X_{1},X_{2},X_{3}\in N(R)$ satisfying
\[
P\otimes\mathrm{Cod}_{R}(X_{1}\otimes X_{2}\otimes X_{3})\preceq Q\otimes\mathrm{Cod}_{R}(X_{1})\otimes\mathrm{Cod}_{R}(X_{2})\otimes\mathrm{Cod}_{R}(X_{3}).
\]
Lemma \ref{lem:schur} gives
\begin{align*}
 & H(P\otimes\mathrm{Cod}_{R}(X_{1}\otimes X_{2}\otimes X_{3}))\\
 & \ge H(Q\otimes\mathrm{Cod}_{R}(X_{1})\otimes\mathrm{Cod}_{R}(X_{2})\otimes\mathrm{Cod}_{R}(X_{3})).
\end{align*}
By additivity and subadditivity of $H$,
\begin{align*}
 & H(P)+H(\mathrm{Cod}_{R}(X_{1})))+H(\mathrm{Cod}_{R}(X_{2})))+H(\mathrm{Cod}_{R}(X_{3})))\\
 & \ge H(P)+H(\mathrm{Cod}_{R}(X_{1}\otimes X_{2}\otimes X_{3})))\\
 & =H(P\otimes\mathrm{Cod}_{R}(X_{1}\otimes X_{2}\otimes X_{3}))\\
 & \ge H(Q\otimes\mathrm{Cod}_{R}(X_{1})\otimes\mathrm{Cod}_{R}(X_{2})\otimes\mathrm{Cod}_{R}(X_{3}))\\
 & =H(Q)+H(\mathrm{Cod}_{R}(X_{1})))+H(\mathrm{Cod}_{R}(X_{2})))+H(\mathrm{Cod}_{R}(X_{3}))).
\end{align*}
We have $H(P)\ge H(Q)$ after cancelling the terms on both sides.
\end{proof}
\medskip{}

We are ready to prove Theorem \ref{thm:finprob_ordcmon}.

\medskip{}

\begin{proof}
[Proof of Theorem \ref{thm:finprob_ordcmon}]We prove that the universal
baseless $\mathsf{COrdCMon}$-entropy is given by the obvious functor
$\iota:\mathsf{FinProb}\to(\mathsf{FinProb},\succsim_{01})$. First,
we show that $\iota$ is a baseless entropy. For monotonicity \eqref{eq:baseless_monotone},
if there is a morphism $f:P\to Q$, then either $P\cong Q$ ($f$
is an injection and hence a bijection), or $H_{0}(P)>H_{0}(Q)$, $H_{1}(P)>H_{1}(Q)$
($f$ is not injective, resulting in a loss of information). Both
cases give $P\succsim_{01}Q$. Additivity \eqref{eq:baseless_additive}
is immediate. For subadditivity \eqref{eq:baseless_subadditive},
consider random variables $X,Y\in N(P)$. By the subadditivity of
$H_{0}$ and $H_{1}$, we have $H_{0}(\mathrm{Cod}(X))+H_{0}(\mathrm{Cod}(Y))\ge H_{0}(\mathrm{Cod}(X\otimes Y))$,
and the same for $H_{1}$. If $X,Y$ are independent, we have $\mathrm{Cod}(X\otimes Y)\cong\mathrm{Cod}(X)\otimes\mathrm{Cod}(Y)$,
and hence $\mathrm{Cod}(X)\otimes\mathrm{Cod}(Y)\succsim_{01}\mathrm{Cod}(X\otimes Y)$.
If $X,Y$ are not independent, then 
\begin{align*}
 & H_{1}(\mathrm{Cod}(X)\otimes\mathrm{Cod}(Y))\\
 & =H_{1}(\mathrm{Cod}(X))+H_{1}(\mathrm{Cod}(Y))\\
 & >H_{1}(\mathrm{Cod}(X\otimes Y)),
\end{align*}
and we still have $\mathrm{Cod}(X)\otimes\mathrm{Cod}(Y)\succsim_{01}\mathrm{Cod}(X\otimes Y)$.

Next, we show the universal property. Fix any $\mathsf{W}'\in\mathsf{COrdCMon}$
and a baseless $\mathsf{COrdCMon}$-entropy $H':\mathsf{FinProb}\to\mathsf{W}'$.
By Lemma \ref{lem:finprob_ord01}, $H'$ can be regarded as an order-preserving
homomorphism $\tilde{H}':(\mathsf{FinProb},\succsim_{01})\to(\mathsf{W},\ge)$,
i.e., $H'$ can be factorized as the composition
\[
\mathsf{FinProb}\stackrel{\iota}{\to}(\mathsf{FinProb},\succsim_{01})\stackrel{\tilde{H}'}{\to}\mathsf{W}.
\]
To prove that $\iota$ is the universal baseless $\mathsf{COrdCMon}$-entropy,
it is left to show that the $\tilde{H}'$ above is unique, i.e., if
$F:(\mathsf{FinProb},\succsim_{01})\to\mathsf{W}$ satisfies $F\iota=H'$,
then $F=\tilde{H}'$. This is clearly true since $F(P)=(F\iota)(P)=H'(P)=\tilde{H}'(P)$
for $P\in\mathsf{FinProb}$.
\end{proof}
\medskip{}

Finally, we prove Theorem \ref{thm:finprob_ic}.

\medskip{}

\begin{proof}
[Proof of Theorem \ref{thm:finprob_ic}]We prove that the universal
baseless $\mathsf{IcOrdCMon}$-entropy is given by $(H_{0},H_{1}):\mathsf{FinProb}\to(H_{0},H_{1})(\mathsf{FinProb})$.
Note that $(H_{0},H_{1})$ is clearly a baseless entropy, which follows
immediately from the monotonicity, additivity and subadditivity of
$H_{0}$ and $H_{1}$. Next, we show the universal property. Fix any
$\mathsf{W}'\in\mathsf{IcOrdCMon}$ and a baseless $\mathsf{IcOrdCMon}$-entropy
$H':\mathsf{FinProb}\to\mathsf{W}'$. By Lemma \ref{lem:finprob_ord01},
$H'$ can be regarded as an order-preserving homomorphism $(\mathsf{FinProb},\succsim_{01})\to(\mathsf{W},\ge)$.
Let $P,Q\in\mathsf{FinProb}$ satisfy $H_{0}(P)\ge H_{0}(Q)$ and
$H_{1}(P)\ge H_{1}(Q)$. Consider any non-degenerate distribution
$R\in\mathsf{FinProb}$. We have $nH_{0}(P)+H_{0}(R)>nH_{0}(Q)$ and
$nH_{1}(P)+H_{1}(R)>nH_{1}(Q)$ for any $n\in\mathbb{Z}_{>1}$, and
hence $P^{\otimes n}\otimes R\succsim_{01}Q^{\otimes n}$, where $P^{\otimes n}=P\otimes\cdots\otimes P$
($n$ times). Since $H'$ is additive and order-preserving with respect
to $\succsim_{01}$, 
\begin{align*}
nH'(P)+H'(R) & =H'(P^{\otimes n}\otimes R)\\
 & \ge H'(Q^{\otimes n})\\
 & =nH'(Q).
\end{align*}
By the integral closedness of $\mathsf{W}'$, we have $H'(P)\ge H'(Q)$.
This implies that if $H_{0}(P)=H_{0}(Q)$ and $H_{1}(P)=H_{1}(Q)$,
then $H'(P)\ge H'(Q)$ and $H'(Q)\ge H'(P)$, implying $H'(P)=H'(Q)$.
$H'$ can be factorized as the composition
\[
\mathsf{FinProb}\stackrel{(H_{0},H_{1})}{\to}(H_{0},H_{1})(\mathsf{FinProb})\stackrel{F}{\to}\mathsf{W},
\]
where $F(a,b)=H'(P)$ where $P$ satisfies $H_{0}(P)=a$ and $H_{1}(P)=b$.
To check that $F$ is an order-preserving homomorphism, for $(a_{1},b_{1}),(a_{2},b_{2})\in(H_{0},H_{1})(\mathsf{FinProb})$,
letting $P_{i}\in\mathsf{FinProb}$ such that $H_{0}(P_{i})=a_{i}$
and $H_{1}(P_{i})=b_{i}$, we have $F(a_{1},b_{1})+F(a_{2},b_{2})=H'(P_{1})+H'(P_{2})=H'(P_{1}\otimes P_{2})=F(a_{1}+a_{2},b_{1}+b_{2})$.
If $(a_{2},b_{2})\ge(a_{1},b_{1})$, then $H_{0}(P_{2})\ge H_{0}(P_{1})$
and $H_{1}(P_{2})\ge H_{1}(P_{1})$, implying $F(a_{2},b_{2})=H'(P_{2})\ge H'(P_{1})=F(a_{1},b_{1})$
as we have proved before. Such $F$ is clearly unique. This completes
the proof.
\end{proof}
\medskip{}

We remark that the same result can be obtained if we allow non-stricly-positive
probability measures in the definition of $\mathsf{FinProb}$,\footnote{If we allow non-stricly-positive probability measures, we have to
slightly change the tensor product over $N(P)$, where the product
of $X:P\to Q$ and $Y:P\to R$ is given by $Z:P\to S$, where $S$
is a probability space with sample space $\Omega(S)=\Omega(Q)\times\Omega(R)$
(where $\Omega(S)$ denotes the sample space of $S$), with probabilities
$S(a,b)=\sum_{t\in\Omega(P):\,(a,b)=(X(t),Y(t))}P(t)$, and the random
variable $Z$ is simply the pairing $(X,Y)$, i.e., $Z(t)=(X(t),Y(t))$
\cite{baez2011entropy}.} since two probability mass functions $P:A\to[0,1]$ and $Q:B\to[0,1]$
must have the same $\mathsf{IcOrdCMon}$-entropy if they have the
same support $\mathrm{supp}(P)=\mathrm{supp}(Q)$ and they are equal
when restricted to the support, since there is a morphism $P\to Q$
in $\mathsf{FinProb}$ (a measure-preserving function $f:A\to B$
with $f(x)=x$ when $x\in\mathrm{supp}(P)$, and $f(x)$ is arbitrary
when $x\notin\mathrm{supp}(P)$) and vice versa, so $P$ and $Q$
have the same entropy since entropy takes values over an ordered commutative
monoid.

Also, we may instead consider the MonSiMon category $(\mathsf{Prob},\,-/\mathsf{FinProb})$,
where $\mathsf{Prob}$ is the category of (finite/infinite) probability
spaces, and $-/\mathsf{FinProb}$ is the functor mapping $P\in\mathsf{Prob}$
to the comma category $P/\iota$, where $\iota:\mathsf{FinProb}\to\mathsf{Prob}$
is the inclusion functor. This is the ``category of finite random
variables over general probability spaces''. It is straightforward
to check that this MonSiMon category has the same universal $\mathsf{IcOrdCMon}$-entropy
given by the pairing of the Shannon entropy and the Hartley entropy.
Nevertheless, $(\mathsf{Prob},\,-/\mathsf{FinProb})$ does not have
a universal baseless $\mathsf{IcOrdCMon}$-entropy since it is not
in the form $(\mathsf{C},\,-/\mathsf{C})$, and one cannot generally
define entropy over arbitrary probability spaces in $\mathsf{Prob}$.

One might also be interested in characterizing the Shannon entropy
or the Hartley entropy alone, instead of the pairing of them. Hartley
entropy can be obtained by first passing the probability distribution
through a forgetful functor $\mathsf{FinProb}\to\mathrm{Epi}(\mathsf{FinISet})$
mapping a probability distribution to its support, and then obtaining
the universal entropy of $\mathrm{Epi}(\mathsf{FinISet})$ in Section
\ref{sec:set}. For Shannon entropy, it can obtained by first passing
the probability distribution through an inclusion functor $\mathsf{FinProb}\to\mathsf{LProb}_{\rho}$
to the category of $\rho$-th-power-summable discrete distributions,
and then passing it through the universal entropy of $\mathsf{LProb}_{\rho}$
in Section \ref{sec:lrho}. It can also be characterized as the universal
entropy that satisfies an additional condition given by the chain
rule, as described in Section \ref{sec:conditional}.

We remark that the universal $\mathsf{OrdVect}_{\mathbb{Q}}$-entropy
of $(\mathsf{FinProb},\,-/\mathsf{FinProb})$ is guaranteed to exist,
but appears to be intractable. For an example of a baseless $\mathsf{OrdVect}_{\mathbb{Q}}$-entropy,
consider $(H_{1},H_{\alpha}):\mathsf{FinProb}\to(\mathbb{R}^{2},\tilde{\ge})$,
where $H_{\alpha}(P)=\frac{1}{1-\alpha}\log\sum_{x}(P(x))^{\alpha}$
(where $\alpha\neq1$) is the order-$\alpha$ R\'enyi entropy \cite{renyi1961measures},
and $(x_{1},x_{2})\,\tilde{\ge}\,(y_{1},y_{2})$ if $x_{1}>y_{1}$
or $(x_{1},x_{2})=(y_{1},y_{2})$. Since $H_{\alpha}$ satisfies the
additivity property (but not the subadditivity property), we can check
that $(H_{1},H_{\alpha})$ is a baseless $\mathsf{OrdVect}_{\mathbb{Q}}$-entropy.
Therefore, the universal baseless $\mathsf{OrdVect}_{\mathbb{Q}}$-entropy
would have to include $H_{\alpha}$ for all $\alpha$, as well as
all other functions that satisfies the additivity property (e.g. Burg
entropy \cite{burg1967maximum}). Not only is this significantly more
complicated than the universal baseless entropies in Corollary \ref{cor:finprob_ab_vect}
and Theorem \ref{thm:lprob}, this also bypasses the subadditivity
property and include components that only satisfies the additivity
property. This highlights the advantage of considering $\mathsf{IcOrdVect}_{\mathbb{Q}}$
(which disqualifies $(\mathbb{R}^{2},\tilde{\ge})$ since it is not
integrally closed) instead of $\mathsf{OrdVect}_{\mathbb{Q}}$.

\medskip{}

\section{Shannon Entropy for $\rho$-th-Power-Summable Probability Distributions\label{sec:lrho}}

Instead of $\mathsf{FinProb}$, we may consider the space of $\rho$-th-power-summable
probability distributions. Consider the category $\mathsf{LProb}_{\rho}$,
$\rho\ge0$, where each object is a strictly positive probability
mass function $P$ over a finite or countable set $S$ such that $\sum_{x\in S}(P(x))^{\rho}<\infty$,
and morphisms are measure-preserving mappings. When $0\le\rho<1$,
each object in $\mathsf{LProb}_{\rho}$ has a finite Shannon entropy.
By the property of Shannon entropy, we can see that $H_{1}:\mathsf{LProb}_{\rho}\to(\mathbb{R},\le)$
is a baseless entropy for $\rho<1$, though the Hartley entropy $H_{0}:\mathsf{LProb}_{\rho}\to(\mathbb{R},\le)$
is a baseless entropy only for $\rho=0$ (where $\mathsf{LProb}_{\rho}$
is the same as $\mathsf{FinProb}$).

The following result gives the universal $\mathsf{IcOrdCMon}$-entropy
of $(\mathsf{LProb}_{\rho},\,-/\mathsf{LProb}_{\rho})$. In particular,
when $0<\rho<1$, the universal baseless $\mathsf{IcOrdCMon}$-entropy
is simply the Shannon entropy $H_{1}$. As a result, the universal
baseless $\mathsf{IcOrdAb}$ and $\mathsf{IcOrdVect}_{\mathbb{Q}}$-entropies
are also given by the Shannon entropy.

\medskip{}

\begin{thm}
\label{thm:lprob}The universal $\mathsf{IcOrdCMon}$-entropy of $(\mathsf{LProb}_{\rho},\,-/\mathsf{LProb}_{\rho})$
is given by $(!,h)$ where $h$ is
\[
\begin{array}{ll}
(h_{0},h_{1}):\,-/\mathsf{LProb}_{\rho}\Rightarrow\Delta_{(H_{0},H_{1})(\mathsf{FinProb})} & \;\text{if}\;\,\rho=0,\\
h_{1}:\,-/\mathsf{LProb}_{\rho}\Rightarrow\Delta_{\mathbb{R}_{\ge0}} & \;\text{if}\;\,0<\rho<1,\\
z:\,-/\mathsf{LProb}_{\rho}\Rightarrow\Delta_{\{0\}} & \;\text{if}\;\,\rho\ge1,
\end{array}
\]
where $z_{P}=\Delta_{0}$ maps anything to $0$. Equivalently, the
universal baseless $\mathsf{IcOrdCMon}$-entropy is
\[
\begin{array}{ll}
(H_{0},H_{1}):\mathsf{FinProb}\to(H_{0},H_{1})(\mathsf{FinProb}) & \;\text{if}\;\,\rho=0,\\
H_{1}:\mathsf{FinProb}\to\mathbb{R}_{\ge0} & \;\text{if}\;\,0<\rho<1,\\
\Delta_{0}:\mathsf{FinProb}\to\{0\} & \;\text{if}\;\,\rho\ge1.
\end{array}
\]
\end{thm}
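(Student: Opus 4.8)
The plan is to treat the three regimes through a single mechanism that reduces everything to a majorization argument in the spirit of the proof of Theorem \ref{thm:finprob_ic}. The case $\rho=0$ is immediate, since $\mathsf{LProb}_0=\mathsf{FinProb}$ and the claim is then exactly Theorem \ref{thm:finprob_ic}. For $\rho>0$ the first step is to locate the Shannon entropy: since the R\'enyi entropy $H_{\rho}(P)=\frac{1}{1-\rho}\log\sum_{x}P(x)^{\rho}$ is finite on $\mathsf{LProb}_{\rho}$ and dominates $H_{1}$ for $\rho<1$, every object of $\mathsf{LProb}_{\rho}$ ($0<\rho<1$) has finite Shannon entropy, so $H_{1}:\mathsf{LProb}_{\rho}\to\mathbb{R}_{\ge0}$ is well defined, surjective, and (by monotonicity, additivity and subadditivity of $H_{1}$) a baseless $\mathsf{IcOrdCMon}$-entropy; conversely, for $\rho\ge1$ the same comparison shows $\mathsf{LProb}_{\rho}$ contains distributions of \emph{infinite} Shannon entropy (e.g.\ $P(k)\propto 1/(k(\log k)^{2})$). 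The content of the theorem is the universal property, i.e.\ that an arbitrary baseless $\mathsf{IcOrdCMon}$-entropy $H:\mathsf{LProb}_{\rho}\to\mathsf{W}$ factors uniquely through $H_{1}$; equivalently, $H(P)\ge H(Q)$ whenever $H_{1}(P)\ge H_{1}(Q)$.

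To prove this I would follow the same three-move pattern as in Section \ref{sec:shannon}. First, extend Schur-concavity (Lemma \ref{lem:schur}) to $\mathsf{LProb}_{\rho}$: the Robin-Hood-transfer argument is purely local and, together with cancellativity of $\mathsf{W}$, shows any baseless entropy is Schur-concave on $\rho$-summable distributions. Second, extend the M\"{u}ller--Pastena theorem (Theorem \ref{thm:h0h1_majorize}) to $\mathsf{LProb}_{\rho}$: whenever $H_{0}(P)\ge H_{0}(Q)$ and $H_{1}(P)>H_{1}(Q)$ (now allowing the value $+\infty$), there is a catalytic majorization $P\otimes\mathrm{Cod}_{R}(X_{1}\otimes X_{2}\otimes X_{3})\preceq Q\otimes\mathrm{Cod}_{R}(X_{1})\otimes\mathrm{Cod}_{R}(X_{2})\otimes\mathrm{Cod}_{R}(X_{3})$, which by Schur-concavity, additivity and cancellation yields $H(P)\ge H(Q)$ exactly as in Lemma \ref{lem:finprob_ord01}. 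The crucial new point --- and the reason the Hartley entropy disappears for $\rho>0$ --- is that $\mathsf{LProb}_{\rho}$ contains an infinite-support catalyst $C$ of finite Shannon entropy (any geometric distribution works, since $\sum_{k}(c r^{k})^{\rho}<\infty$ for $0<r<1$). Tensoring with $C$ forces $H_{0}(P\otimes C)=H_{0}(Q\otimes C)=\infty$ while strictly separating Shannon entropies, so the $H_{0}$-hypothesis becomes automatic and can be cancelled afterwards, upgrading the conclusion to $H_{1}(P)>H_{1}(Q)\Rightarrow H(P)\ge H(Q)$ with no Hartley condition. Third, as in the proof of Theorem \ref{thm:finprob_ic}, pass from strict to non-strict inequality using integral closedness: if $H_{1}(P)\ge H_{1}(Q)$ then $nH_{1}(P)+H_{1}(R)>nH_{1}(Q)$ for a non-degenerate $R$ and all $n$, giving $nH(P)+H(R)\ge nH(Q)$ and hence $H(P)\ge H(Q)$.

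Assembling these gives the result in each regime. For $0<\rho<1$, the displayed implication shows $H$ depends only on $H_{1}(P)$ and is order-preserving in it, so $H=F\circ H_{1}$ for a unique order-preserving homomorphism $F:\mathbb{R}_{\ge0}\to\mathsf{W}$, and $H_{1}:\mathsf{LProb}_{\rho}\to\mathbb{R}_{\ge0}$ is universal. For $\rho\ge1$, apply the same implication with $+\infty$ values: any two infinite-entropy distributions have $H_{1}=\infty=H_{1}$ in both directions (both have infinite support, so the $H_{0}$-condition holds), hence all lie in a single class $c$; since $P\otimes P$ is again infinite-entropy, $c=2c$ in the cancellative monoid $\mathsf{W}$, forcing $c=0$, while every finite-entropy $R$ satisfies $0=H(P)\ge H(R)\ge0$ by domination, so $H(R)=0$. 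Therefore $H\equiv0$ and the universal codomain is $\{0\}$. I expect the main obstacle to be the second move: making the M\"{u}ller--Pastena catalytic-majorization argument work for genuinely infinite (merely $\rho$-summable) distributions and for the value $H_{1}=+\infty$, where one must control infinite sums and tail behaviour using $\rho$-summability rather than finiteness of the support.
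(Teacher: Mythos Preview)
Your overall plan is sound in spirit, but the implementation diverges from the paper's at exactly the point you flag as ``the main obstacle'', and that point is a genuine gap. Your second move requires extending both Lemma~\ref{lem:schur} and Theorem~\ref{thm:h0h1_majorize} to infinite-support distributions: you want to tensor with a geometric catalyst $C$, invoke M\"uller--Pastena with $H_0=\infty$ on both sides, and cancel. But for countable distributions, $P\preceq Q$ need not reduce to a \emph{finite} chain of Robin-Hood transfers, so even Schur-concavity of an abstract $\mathsf{W}$-valued $H$ (no continuity assumed) is unclear; and the catalytic-majorization witnesses $X_1,X_2,X_3$ of Theorem~\ref{thm:h0h1_majorize} are only supplied in the finite case. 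Your proposal thus rests on two unproved extensions, and neither is routine.

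The paper sidesteps both issues by never leaving the finite theorem. For $P,Q\in\mathsf{FinProb}$ with $H_1(P)\ge H_1(Q)$ it uses \emph{finite truncations} $Y_n=\min\{X,n\}$ of a geometric $X\in\mathsf{LProb}_\rho$: for large $n$ the finite distributions $P^{\otimes k}\otimes P_{Y_n}$ and $Q^{\otimes k}$ satisfy the $H_0$- and $H_1$-hypotheses of Lemma~\ref{lem:finprob_ord01}, while the infinite $X$ enters only through the monotonicity bound $H(X)\ge H(Y_n)$, yielding $kH(P)+H(X)\ge kH(Q)$ for all $k$ and hence $H(P)\ge H(Q)$ by integral closedness. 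This is where the Hartley component dies, with no infinite majorization needed. Extending the resulting factorization $H=F\circ H_1$ from $\mathsf{FinProb}$ to all of $\mathsf{LProb}_\rho$ is a separate step that your proposal does not address: the paper writes $X_n=(Y_n,Z_n)$, gets $H(P)\le F(H_1(P))+H(Z_n)$ by subadditivity, and then uses a Borel--Cantelli construction to produce a single $W\in\mathsf{LProb}_\rho$ with $(Z_1,\ldots,Z_n)$ a function of $W$ for every $n$, so that $\sum_{i\le n} H(Z_i)\le H(W)$ and integral closedness finishes. The $\rho\ge1$ case follows the same template (truncate an infinite-entropy $R$, compare with $P^{\otimes n}$ inside $\mathsf{FinProb}$, then run the same $W$-construction), rather than via your $c=2c$ argument, which again leans on the unproven infinite extension.
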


\medskip{}

The universal entropies of $(\mathsf{LProb}_{\rho},\,-/\mathsf{LProb}_{\rho})$
for $0<\rho<1$ can be summarized by the following diagram, where
the categories on the left are the ``categories of codomains'' of
entropies (see Section \ref{subsec:cat_of_codomains}), and the categories
on the right are elements of the corresponding categories on the left.
The arrow $H_{1}\circ\mathrm{Cod}$ is the universal $\mathsf{IcOrdCMon}$-entropy
of $(\mathsf{LProb}_{\rho},\,-/\mathsf{LProb}_{\rho})$ in Theorem
\ref{thm:lprob}.\footnote{This notation is justified by the one-to-one correspondence $Uh=UH\circ\mathrm{Cod}$
in Proposition \ref{prop:baseless}, where we omit the forgetful functor
$U$ for brevity.} The ``$?$'' are the codomains of the universal $\mathsf{OrdCMon}$,
$\mathsf{COrdCMon}$, $\mathsf{OrdAb}$ and $\mathsf{OrdVect}_{\mathbb{Q}}$
entropies of $(\mathsf{LProb}_{\rho},\,-/\mathsf{LProb}_{\rho})$,
which are guaranteed to exist but appear to be intractable and uninteresting,
and hence omitted. This also highlights the usefulness of the integral
closedness property in the three ``$\mathsf{Ic}\cdots$'' categories
of codomains, which greatly simplifies the entropy.\\
\[\begin{tikzcd}[column sep=scriptsize]
	{\mathsf{MonSiMonCat}} && \ni & {-/\mathsf{LProb}_{\rho}} \\
	{\mathsf{OrdCMon}} && \ni & {?} \\
	{\mathsf{COrdCMon}} & {\mathsf{IcOrdCMon}} & \ni & {?} & {\mathbb{R}_{\ge0}} \\
	{\mathsf{OrdAb}} & {\mathsf{IcOrdAb}} & \ni & {?} & {\mathbb{R}} \\
	{\mathsf{OrdVect}_{\mathbb{Q}}} & {\mathsf{IcOrdVect}_{\mathbb{Q}}} & \ni & {?} & {\mathbb{R}}
	\arrow[hook, from=2-1, to=1-1]
	\arrow[hook, from=3-2, to=3-1]
	\arrow[hook, from=3-1, to=2-1]
	\arrow[hook, from=5-2, to=4-2]
	\arrow[hook, from=4-2, to=3-2]
	\arrow[hook, from=4-1, to=3-1]
	\arrow[hook, from=5-1, to=4-1]
	\arrow[hook, from=4-2, to=4-1]
	\arrow[hook, from=5-2, to=5-1]
	\arrow[from=1-4, to=2-4]
	\arrow[from=2-4, to=3-4]
	\arrow[from=4-4, to=5-4]
	\arrow[from=3-4, to=3-5]
	\arrow[from=4-4, to=4-5]
	\arrow[from=5-4, to=5-5]
	\arrow[hook, from=3-5, to=4-5]
	\arrow[hook, from=4-5, to=5-5]
	\arrow[from=3-4, to=4-4]
	\arrow["{H_1 \circ\mathrm{Cod}}", from=1-4, to=3-5]
\end{tikzcd}\]
\begin{proof}
The case $\rho=0$ is proved in Theorem \ref{thm:finprob_ic}. Consider
the case $0<\rho<1$. Consider any baseless $\mathsf{IcOrdCMon}$-entropy
$H:\mathsf{LProb}_{\rho}\to\mathsf{W}$. Applying Lemma \ref{lem:finprob_ord01}
on the restriction of $H$ to $\mathsf{FinProb}$, we know that for
$P,Q\in\mathsf{FinProb}$, $P\succsim_{01}Q$ implies $H(P)\ge H(Q)$.
Consider $P,Q\in\mathsf{FinProb}$ where $H_{1}(P)\ge H_{1}(Q)$.
Let $X\sim\mathrm{Geom}(1/2)$ be a geometric random variable with
probability mass function (pmf) $P_{X}\in\mathsf{LProb}_{\rho}$,
and let $Y_{n}=\min\{X,n\}$ with pmf $P_{Y_{n}}\in\mathsf{FinProb}$.
Fix $k\in\mathbb{N}$. We have $H_{1}(P^{\otimes k}\otimes P_{Y_{n}})=kH_{1}(P)+H_{1}(P_{Y_{n}})>H_{1}(Q^{\otimes k})$,
and $H_{0}(P^{\otimes k}\otimes P_{Y_{n}})=kH_{0}(P)+n>H_{0}(Q^{\otimes k})$
for $n$ large enough. Hence for $n$ large enough, we have $P^{\otimes k}\otimes P_{Y_{n}}\succsim_{01}Q^{\otimes k}$,
and hence (writing $H(X)=H(P_{X})$)
\begin{align*}
kH(P)+H(X) & \ge kH(P)+H(Y_{n})\\
 & \ge H(P^{\otimes k}\otimes P_{Y_{n}})\\
 & \ge H(Q^{\otimes k})\\
 & =kH(Q).
\end{align*}
By the integral closedness of $\mathsf{W}$, we have $H(P)\ge H(Q)$.
Hence, when restricted on $\mathsf{FinProb}$, $H$ is order-preserving
with respect to $H_{1}$ and is determined by $H_{1}$, i.e., there
exists an order-preserving function $F:\mathbb{R}_{\ge0}\to\mathsf{W}$
such that $H(P)=F(H_{1}(P))$ for $P\in\mathsf{FinProb}$. Since $H$
and $H_{1}$ are additive, $F$ is also a monoid homomorphism.

It is left to extend this to all $P\in\mathsf{LProb}_{\rho}$. Consider
any nondegenerate $P\in\mathsf{LProb}_{\rho}$ not in $\mathsf{FinProb}$.
Without loss of generality, assume $P$ is a probability mass function
over $\mathbb{N}$. Let $a_{n}\in\mathbb{N}$ be such that $\sum_{x=a_{n}+1}^{\infty}(P(x))^{\rho}\le2^{-n}$.
Let $X_{1},X_{2},\ldots$ be an i.i.d. sequence of random variables
following $P$, and $Y_{n}=\min\{X_{n},a_{n}\}$ (with pmf $P_{Y_{n}}\in\mathsf{FinProb}$),
$Z_{n}=\max\{X_{n},a_{n}\}$. Since $Y_{n}$ is a function of $X_{n}$,
we have $H(P)=H(X_{n})\ge H(Y_{n})=F(H_{1}(Y_{n}))$. By $\lim_{n\to\infty}H_{1}(P_{Y_{n}})=H_{1}(P)$,
we have $H(P)\ge F(t)$ for every $0\le t<H_{1}(P)$, implying 
\begin{align*}
kH(P)+F(1) & \ge kF(H_{1}(P)-k^{-1})+F(1)\\
 & =kF(H_{1}(P)-k^{-1})+kF(k^{-1})\\
 & =kF(H_{1}(P))
\end{align*}
for $k\in\mathbb{N}$ large enough such that $H_{1}(P)-k^{-1}>0$.
By integral closedness, $H(P)\ge F(H_{1}(P))$.

We now prove the other direction ``$H(P)\le F(H_{1}(P))$''. Since
$X_{n}$ is a function of $(Y_{n},Z_{n})$, by the subadditivity property
of entropy, 
\begin{align}
H(P) & \le H(Y_{n})+H(Z_{n})\nonumber \\
 & =F(H_{1}(Y_{n}))+H(Z_{n})\nonumber \\
 & \le F(H_{1}(P))+H(Z_{n}).\label{eq:hP_upperbd}
\end{align}
Let $S=\{n\in\mathbb{N}:\,X_{n}>a_{n}\}$. By the Borel-Cantelli lemma,
since 
\begin{align}
\sum_{n=1}^{\infty}\mathbb{P}(X_{n}>a_{n}) & =\sum_{n=1}^{\infty}\sum_{x=a_{n}+1}^{\infty}P(x)\nonumber \\
 & \le\sum_{n=1}^{\infty}\sum_{x=a_{n}+1}^{\infty}(P(x))^{\rho}\nonumber \\
 & \le\sum_{n=1}^{\infty}2^{-n}\nonumber \\
 & =1,\label{eq:borel}
\end{align}
$S$ is almost surely finite. Consider the random variable $W=(S,(X_{n})_{n\in S})$
which lies in the disjoint union $\bigcup_{s\subseteq\mathbb{N}\;\mathrm{finite}}\mathbb{N}^{s}$,
which is a countable set. We have (note that $s$ sums over finite
subsets of $\mathbb{N}$ below)
\begin{align}
 & \sum_{(s,(x_{n})_{n\in s}):\,\forall n.\,x_{n}>a_{n}}(\mathbb{P}(W=(s,(x_{n})_{n\in s})))^{\rho}\nonumber \\
 & =\sum_{(s,(x_{n})_{n\in s}):\,\forall n.\,x_{n}>a_{n}}\Big(\prod_{n\in s}(\mathbb{P}(X_{n}=x_{n}))^{\rho}\Big)\Big(\prod_{n\in\mathbb{N}\backslash s}(\mathbb{P}(X_{n}\le a_{n}))^{\rho}\Big)\nonumber \\
 & \le\sum_{(s,(x_{n})_{n\in s}):\,\forall n.\,x_{n}>a_{n}}\,\prod_{n\in s}(P(x_{n}))^{\rho}\nonumber \\
 & =\sum_{s}\prod_{n\in s}\sum_{x=a_{n}+1}^{\infty}(P(x))^{\rho}\nonumber \\
 & \le\sum_{s}\prod_{n\in s}2^{-n}\nonumber \\
 & =\prod_{n=1}^{\infty}(1+2^{-n})\nonumber \\
 & =\exp\left(\sum_{n=1}^{\infty}\log(1+2^{-n})\right)\nonumber \\
 & <e.\label{eq:pw_in}
\end{align}
Hence $P_{W}\in\mathsf{LProb}_{\rho}$. Note that $Z_{n}$ is a function
of $W$ since $Z_{n}=X_{n}$ if $n\in S$, and $Z_{n}=a_{n}$ if $n\notin S$.
Hence $(Z_{1},\ldots,Z_{n})$ is a function of $W$, and
\begin{align}
H(W) & \ge H(Z_{1},\ldots,Z_{n})=\sum_{i=1}^{n}H(Z_{i}),\label{eq:HW_sumHZ}
\end{align}
where the last equality is due to the additivity property of entropy
since $Z_{1},\ldots,Z_{n}$ are mutually independent. Combining this
with \eqref{eq:hP_upperbd}, for every $n\in\mathbb{N}$,
\begin{align*}
nH(P) & \le\sum_{i=1}^{n}\left(F(H_{1}(P))+H(Z_{i})\right)\\
 & =nF(H_{1}(P))+\sum_{i=1}^{n}H(Z_{i})\\
 & \le nF(H_{1}(P))+H(W),
\end{align*}
and hence $H(P)\le F(H_{1}(P))$ by integral closedness. Therefore,
we can conclude that $H(P)=F(H_{1}(P))$, and $H_{1}$ is the universal
baseless $\mathsf{IcOrdCMon}$-entropy.

Finally, consider the case $\rho=1$ (note that any $\rho\ge1$ is
the same as $\rho=1$ since every probablity mass function satisfies
$\sum_{x\in S}(P(x))^{\rho}<\infty$ for $\rho\ge1$). Consider any
baseless $\mathsf{IcOrdCMon}$-entropy $H:\mathsf{LProb}_{1}\to\mathsf{W}$.
Applying Lemma \ref{lem:finprob_ord01} on the restriction of $H$
to $\mathsf{FinProb}$, we know that for $P,Q\in\mathsf{FinProb}$,
$P\succsim_{01}Q$ implies $H(P)\ge H(Q)$. For any $P\in\mathsf{FinProb}$,
letting $R\in\mathsf{LProb}_{1}$ with $H_{1}(R)=\infty$ (e.g., $R(x)\propto1/(x\log^{2}(x+1))$
for $x\in\mathbb{N}$), we have $R\succsim_{01}P^{\otimes n}$ for
every $n\in\mathbb{N}$, and hence $H(R)\ge nH(P)$, and $H(P)=0$
due to integral closedness. To extend this to $P\in\mathsf{LProb}_{1}$,
applying the same construction as in \eqref{eq:hP_upperbd} on $\rho=1$,
we have
\begin{align*}
H(P) & \le H(Y_{n})+H(Z_{n})=H(Z_{n}),
\end{align*}
and \eqref{eq:HW_sumHZ} gives
\[
H(W)\ge\sum_{i=1}^{n}H(Z_{i}).
\]
Hence 
\[
nH(P)\le\sum_{i=1}^{n}H(Z_{i})\le H(W),
\]
and $H(P)=0$ due to integral closedness. Therefore, $\Delta_{0}:\mathsf{FinProb}\to\{0\}$
is the universal baseless $\mathsf{IcOrdCMon}$-entropy.
\end{proof}
\medskip{}

We can also consider the category $\mathsf{HProb}$, where each object
is a strictly positive probability mass function $P$ over a finite
or countable set $S$ with finite Shannon entropy $H_{1}(P)<\infty$,
and morphisms are measure-preserving mappings. It includes $\mathsf{LProb}_{\rho}$
as a subcategory for $0\le\rho<1$. Its universal entropy is, unsurprisingly,
the Shannon entropy.

\medskip{}

\begin{thm}
\label{thm:finarvect-2-1}The universal $\mathsf{IcOrdCMon}$-entropy
of $(\mathsf{HProb},\,-/\mathsf{HProb})$ is given by $(!,h_{1})$.
Equivalently, the universal baseless $\mathsf{IcOrdCMon}$-entropy
is $H_{1}$.
\end{thm}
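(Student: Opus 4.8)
The plan is to follow the proof of Theorem \ref{thm:lprob} for the range $0<\rho<1$ essentially verbatim, since $\mathsf{HProb}$ differs from $\mathsf{LProb}_{\rho}$ only in that finite Shannon entropy ($H_{1}(P)<\infty$) replaces $\rho$-th-power-summability. First I would observe that $H_{1}:\mathsf{HProb}\to\mathbb{R}_{\ge0}$ is a baseless $\mathsf{IcOrdCMon}$-entropy, which is immediate from the monotonicity \eqref{eq:baseless_monotone}, additivity \eqref{eq:baseless_additive} and subadditivity \eqref{eq:baseless_subadditive} of Shannon entropy, together with the finiteness built into the objects of $\mathsf{HProb}$. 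The task is then, given an arbitrary baseless $\mathsf{IcOrdCMon}$-entropy $H:\mathsf{HProb}\to\mathsf{W}$, to produce a unique order-preserving monoid homomorphism $F:\mathbb{R}_{\ge0}\to\mathsf{W}$ with $H=F\circ H_{1}$.

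The first step handles the restriction $H|_{\mathsf{FinProb}}$. Applying Lemma \ref{lem:finprob_ord01} shows that $H$ is additive and order-preserving for $\succsim_{01}$ on $\mathsf{FinProb}$. Using a geometric variable $X\sim\mathrm{Geom}(1/2)$ (which lies in $\mathsf{HProb}$ since it has finite entropy) together with its truncations $Y_{n}=\min\{X,n\}$, exactly as in Theorem \ref{thm:lprob}, I would show that $H_{1}(P)\ge H_{1}(Q)$ forces $P^{\otimes k}\otimes P_{Y_{n}}\succsim_{01}Q^{\otimes k}$ for $n$ large, hence $kH(P)+H(X)\ge kH(Q)$ for all $k$, and then $H(P)\ge H(Q)$ by integral closedness of $\mathsf{W}$. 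Since $H_{1}(\mathsf{FinProb})=\mathbb{R}_{\ge0}$, this yields an order-preserving homomorphism $F$ with $H(P)=F(H_{1}(P))$ for every finite $P$, and makes $F$ uniquely determined.

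The second step extends the identity $H(P)=F(H_{1}(P))$ to all $P\in\mathsf{HProb}$, and is where the argument must adapt. Writing $P$ as a pmf on $\mathbb{N}$, I choose thresholds $a_{1}<a_{2}<\cdots$ and set $Y_{n}=\min\{X_{n},a_{n}\}$, $Z_{n}=\max\{X_{n},a_{n}\}$ for i.i.d.\ copies $X_{n}\sim P$. The lower bound $H(P)\ge F(H_{1}(P))$ is unchanged: $Y_{n}$ is a function of $X_{n}$ and $H_{1}(Y_{n})\to H_{1}(P)$, so $kH(P)+F(1)\ge kF(H_{1}(P))$ and integral closedness gives the inequality. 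For the upper bound, subadditivity gives $H(P)\le H(Y_{n})+H(Z_{n})\le F(H_{1}(P))+H(Z_{n})$ (using $H_{1}(Y_{n})\le H_{1}(P)$ and $F$ order-preserving), and I introduce the catalytic variable $W=(S,(X_{n})_{n\in S})$ with $S=\{n:X_{n}>a_{n}\}$, so that each $Z_{i}$ is a function of $W$ and the mutually independent $Z_{1},\dots,Z_{n}$ give $H(W)\ge\sum_{i=1}^{n}H(Z_{i})$ as in \eqref{eq:HW_sumHZ}; then $nH(P)\le nF(H_{1}(P))+H(W)$ for all $n$, and integral closedness finishes.

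The main obstacle is verifying $P_{W}\in\mathsf{HProb}$, i.e.\ $H_{1}(W)<\infty$, which is the analogue of the $\rho$-summability estimate \eqref{eq:pw_in} that is no longer available. Here I would note that $W$ is equivalent to the sequence of independent markers $\tilde Z_{n}$, where $\tilde Z_{n}=\bot$ (a single symbol) if $X_{n}\le a_{n}$ and $\tilde Z_{n}=X_{n}$ otherwise, so that $H_{1}(W)=\sum_{n}H_{1}(\tilde Z_{n})$ with $H_{1}(\tilde Z_{n})=-(1-q_{n})\log(1-q_{n})+\tau_{n}$, where $q_{n}=\sum_{x>a_{n}}P(x)$ and $\tau_{n}=\sum_{x>a_{n}}P(x)(-\log P(x))$. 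Since $\sum_{x}P(x)=1$ and $H_{1}(P)<\infty$, both tails $q_{n}$ and $\tau_{n}$ tend to $0$, so I can pick $a_{n}$ strictly increasing and fast enough that $q_{n}\le 2^{-n}$ and $\tau_{n}\le 2^{-n}$. Borel--Cantelli then makes $S$ almost surely finite (so $W$ is a genuine discrete variable on a countable set), and the elementary bound $-(1-q)\log(1-q)\le 2q$ for $q\le\tfrac12$ gives $H_{1}(W)\lesssim\sum_{n}q_{n}+\sum_{n}\tau_{n}<\infty$. This tail-entropy estimate is the only genuinely new ingredient; the remaining bookkeeping (well-definedness and uniqueness of $F$, and the surjectivity $H_{1}(\mathsf{FinProb})=\mathbb{R}_{\ge0}$) is routine.
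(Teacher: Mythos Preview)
Your proposal is correct and follows essentially the same route as the paper. The paper's proof also reduces to Theorem \ref{thm:lprob}, choosing $a_{n}$ so that $\sum_{x>a_{n}}P(x)\le 2^{-n}$ and $H_{1}(P_{Z_{n}})\le 2^{-n}$, and then uses $H_{1}(P_{W})=\sum_{n}H_{1}(P_{Z_{n}})\le 1$ to place $P_{W}$ in $\mathsf{HProb}$; your decomposition $H_{1}(\tilde Z_{n})=-(1-q_{n})\log(1-q_{n})+\tau_{n}$ is just an explicit unpacking of the same condition.
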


\begin{proof}
The proof follows from the same arguments as the proof of Theorem
\ref{thm:lprob}, except that here we define $a_{n}\in\mathbb{N}$
be such that $\sum_{x=a_{n}+1}^{\infty}P(x)\le2^{-n}$ and $H_{1}(P_{Z_{n}})\le2^{-n}$,
where $X_{1},X_{2},\ldots$ is an i.i.d. sequence of random variables
following $P$, $Y_{n}=\min\{X_{n},a_{n}\}$, $Z_{n}=\max\{X_{n},a_{n}\}$.
We can still invoke the Borel-Cantelli lemma \eqref{eq:borel} since
$\sum_{x=a_{n}+1}^{\infty}P(x)\le2^{-n}$. Instead of \eqref{eq:pw_in},
we use the fact that $H_{1}(P_{W})=\sum_{n=1}^{\infty}H_{1}(P_{Z_{n}})\le1$,
so $P_{W}\in\mathsf{HProb}$. The rest of the proof is the same as
Theorem \ref{thm:lprob}.
\end{proof}
\medskip{}

\section{Universal Entropy of Various Categories\label{sec:examples}}

In this section, we derive the universal entropies of various categories.
While these results are technically simple, by relating these categories
with $\mathsf{FinProb}$, we obtain insights on different contexts
from which information and entropy arise. We will see in Section \ref{sec:global}
that these universal entropies can be linked together via functors
between these categories in a natural manner. 

\medskip{}

\subsection{Universal Entropy of Sets\label{sec:set}}

Consider the monoidal category $\mathrm{Epi}(\mathsf{FinISet})$ consisting
of finite inhabited (nonempty) sets as objects, and epimorphisms between
them as morphisms. The tensor product is the cartesian product, and
the under category $N(A)=A/\mathrm{Epi}(\mathsf{FinISet})$ is equipped
with a tensor product, where for $f:A\to B$, $g:A\to C$, $f\otimes g:A\to\mathrm{im}((f,g))$
is the restriction of the codomain of $(f,g):A\to B\times C$ to its
image. Note that $f\otimes g$ is also the categorical product over
$A/\mathrm{Epi}(\mathsf{FinSet})$. It is straightforward to verify
that $-/\mathrm{Epi}(\mathsf{FinISet}):\mathrm{Epi}(\mathsf{FinISet})^{\mathrm{op}}\to\mathsf{MonCat}_{s}$
is a lax monoidal functor. We now find the universal baseless entropies
of $\mathrm{Epi}(\mathsf{FinISet})$, which is simply given by the
logarithm of the cardinality, i.e., the Hartley entropy of the set
\cite{hartley1928transmission}.

\medskip{}

\begin{prop}
\label{prop:finset}For the MonSiMon category $(\mathrm{Epi}(\mathsf{FinISet}),\,-/\mathrm{Epi}(\mathsf{FinISet}))$:
\begin{enumerate}
\item The universal baseless $\mathsf{OrdCMon}$-entropy is given by $\log|\cdot|:\mathrm{Epi}(\mathsf{FinISet})\to\log\mathbb{N}$,
where $\log\mathbb{N}=\{\log n:\,n\in\mathbb{N}\}$. Same for $\mathsf{COrdCMon}$
and $\mathsf{IcOrdCMon}$.
\item The universal baseless $\mathsf{OrdAb}$-entropy is given by $\log|\cdot|:\mathrm{Epi}(\mathsf{FinISet})\to\log\mathbb{Q}_{>0}$.
Same for $\mathsf{IcOrdAb}$.
\item The universal baseless $\mathsf{OrdVect}_{\mathbb{Q}}$-entropy is
given by $\log|\cdot|:\mathrm{Epi}(\mathsf{FinISet})\to\mathbb{Q}\log\mathbb{Q}_{>0}$.
Same for $\mathsf{IcOrdVect}_{\mathbb{Q}}$.
\end{enumerate}
\end{prop}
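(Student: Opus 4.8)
The plan is to exploit the triviality of the hom-sets in $\mathrm{Epi}(\mathsf{FinISet})$: there is a morphism (surjection) $A\to B$ between inhabited finite sets if and only if $|A|\ge|B|$, so a baseless entropy is forced to depend only on cardinality and to behave like a monotone multiplicative function on $\mathbb{N}$. First I would show that any baseless $\mathsf{V}$-entropy $H:\mathrm{Epi}(\mathsf{FinISet})\to\mathsf{W}$ factors through cardinality. Since surjections exist in both directions between two sets of the same size, monotonicity \eqref{eq:baseless_monotone} forces $H(A)=H(B)$ whenever $|A|=|B|$, so I may define $h:\mathbb{N}\to\mathsf{W}$ by $h(n)=H(A)$ for any $A$ with $|A|=n$. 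The axioms then translate to: $h(1)=0$ (a one-element set is the tensor unit), $h(nm)=h(n)+h(m)$ by additivity \eqref{eq:baseless_additive} and $|A\times B|=|A|\,|B|$, and $n\ge m\Rightarrow h(n)\ge h(m)$ by monotonicity. I would also remark that subadditivity \eqref{eq:baseless_subadditive} is automatic here: for surjections $X:A\to B$, $Y:A\to C$ the joint $X\otimes Y$ has codomain $\mathrm{im}((X,Y))$ of cardinality at most $|B|\,|C|$, so $h(|B|)+h(|C|)=h(|B|\,|C|)\ge h(|\mathrm{im}((X,Y))|)$ by monotonicity. Thus baseless entropies correspond exactly to multiplicative, monotone $h:\mathbb{N}\to\mathsf{W}$ with $h(1)=0$.

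Next I would verify that $\log|\cdot|:\mathrm{Epi}(\mathsf{FinISet})\to\log\mathbb{N}$ is itself such an entropy and is universal in $\mathsf{OrdCMon}$. Given any $H$ with associated $h$, I would define $F:\log\mathbb{N}\to\mathsf{W}$ by $F(\log n)=h(n)$; this is well-defined ($\log$ is injective on $\mathbb{N}$), additive, and order-preserving by the properties of $h$, and it is the unique map with $H=F\circ\log|\cdot|$ since $\log|\cdot|$ is surjective onto $\log\mathbb{N}$. This establishes part (1) for $\mathsf{OrdCMon}$. Because $\log\mathbb{N}$ is a sub-ordered-monoid of $(\mathbb{R},+)$, it is cancellative and integrally closed, hence already an object of $\mathsf{COrdCMon}$ and $\mathsf{IcOrdCMon}$; as the universal $\mathsf{OrdCMon}$-entropy already lands there, it remains universal in those subcategories (equivalently, the relevant reflections of Section \ref{subsec:cat_of_codomains} fix $\log\mathbb{N}$).

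For the group and vector-space cases I would use that the universal $\mathsf{V}$-entropy is the reflection into $\mathsf{V}$ (Definition \ref{def:baseless-1}, Proposition \ref{prop:baseless}) and that reflections compose, so it equals the reflection of $\log\mathbb{N}$ into $\mathsf{V}$ precomposed with $\log|\cdot|$; alternatively one can repeat the factorization argument directly. For $\mathsf{OrdAb}$ the reflection of $\log\mathbb{N}$ is its Grothendieck group of differences, whose elements $\log n-\log m=\log(n/m)$ form exactly $\log\mathbb{Q}_{>0}$, and one checks the induced order $\log(n/m)\ge\log(n'/m')\Leftrightarrow nm'\ge n'm$ agrees with the order inherited from $\mathbb{R}$; being a subgroup of $\mathbb{R}$ it is integrally closed, so the $\mathsf{IcOrdAb}$ reflection coincides, giving part (2). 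For $\mathsf{OrdVect}_{\mathbb{Q}}$ the reflection is the $\mathbb{Q}$-vector space generated by the (torsion-free) group $\log\mathbb{Q}_{>0}$, which I would identify inside $\mathbb{R}$ with $\mathbb{Q}\log\mathbb{Q}_{>0}=\{a\log b:a\in\mathbb{Q},\,b\in\mathbb{Q}_{>0}\}$ by clearing denominators in a prime-basis expansion (writing $\sum_p a_p\log p=\tfrac1M\log\prod_p p^{Ma_p}$); it is Archimedean, hence integrally closed, so $\mathsf{IcOrdVect}_{\mathbb{Q}}$ yields the same answer, giving part (3).

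I expect the main obstacle to be the bookkeeping in the last step: confirming that the abstractly-defined reflections of Section \ref{subsec:cat_of_codomains} coincide with the concrete codomains $\log\mathbb{Q}_{>0}$ and $\mathbb{Q}\log\mathbb{Q}_{>0}$, and in particular that the positive cone produced by the many-copy-convertibility construction matches the order these sets inherit as subsets of $\mathbb{R}$ (the clearing-denominators argument shows the conic hull of $\{\log q:q\in\mathbb{Q}_{>0},\,q\ge1\}$ is precisely $\{v\in\mathbb{Q}\log\mathbb{Q}_{>0}:v\ge0\}$). Everything else is a direct consequence of the fact that morphisms in $\mathrm{Epi}(\mathsf{FinISet})$ are governed solely by cardinality.
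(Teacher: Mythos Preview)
Your proposal is correct and follows essentially the same approach as the paper: both reduce to the observation that a baseless entropy on $\mathrm{Epi}(\mathsf{FinISet})$ depends only on cardinality, is monotone and multiplicative in $\mathbb{N}$, and hence factors uniquely through $\log|\cdot|:\mathrm{Epi}(\mathsf{FinISet})\to\log\mathbb{N}$. The paper's proof is terser (it handles only the $\mathsf{OrdCMon}$ case and then says the remaining cases ``follow directly''), while you spell out explicitly why $\log\mathbb{N}$ is already cancellative and integrally closed and why its Grothendieck group and $\mathbb{Q}$-span are $\log\mathbb{Q}_{>0}$ and $\mathbb{Q}\log\mathbb{Q}_{>0}$; this extra care is sound and fills in details the paper leaves implicit.
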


\medskip{}

These universal entropies can be summarized by the following diagram,
where the categories on the left are the ``categories of codomains''
of entropies (see Section \ref{subsec:cat_of_codomains}), and the
categories on the right are elements of the corresponding categories
on the left. The arrow $(\log|\cdot|)\circ\mathrm{Cod}$ is the universal
$\mathsf{OrdCMon}$-entropy of $(\mathrm{Epi}(\mathsf{FinISet}),-/\mathrm{Epi}(\mathsf{FinISet}))$
in Proposition \ref{prop:finset}.\footnote{This notation is justified by the one-to-one correspondence $Uh=UH\circ\mathrm{Cod}$
in Proposition \ref{prop:baseless}, where we omit the forgetful functor
$U$ for brevity.}

\[\begin{tikzcd}
	{\mathsf{MonSiMonCat}} && \ni & {-/\mathrm{Epi}(\mathsf{FinISet})} \\
	{\mathsf{OrdCMon}} && \ni & {\log\mathbb{N}} \\
	{\mathsf{COrdCMon}} & {\mathsf{IcOrdCMon}} & \ni & {\log\mathbb{N}} & {\log\mathbb{N}} \\
	{\mathsf{OrdAb}} & {\mathsf{IcOrdAb}} & \ni & {\log\mathbb{Q}_{>0}} & {\log\mathbb{Q}_{>0}} \\
	{\mathsf{OrdVect}_{\mathbb{Q}}} & {\mathsf{IcOrdVect}_{\mathbb{Q}}} & \ni & {\mathbb{Q}\log\mathbb{Q}_{>0}} & {\mathbb{Q}\log\mathbb{Q}_{>0}}
	\arrow[hook, from=2-1, to=1-1]
	\arrow[hook, from=3-2, to=3-1]
	\arrow[hook, from=3-1, to=2-1]
	\arrow[hook, from=5-2, to=4-2]
	\arrow[hook, from=4-2, to=3-2]
	\arrow[hook, from=4-1, to=3-1]
	\arrow[hook, from=5-1, to=4-1]
	\arrow[hook, from=4-2, to=4-1]
	\arrow[hook, from=5-2, to=5-1]
	\arrow["{(\log|\cdot|)\circ\mathrm{Cod}}", from=1-4, to=2-4]
	\arrow[Rightarrow, no head, from=2-4, to=3-4]
	\arrow[hook, from=3-4, to=4-4]
	\arrow[hook, from=4-4, to=5-4]
	\arrow[Rightarrow, no head, from=3-4, to=3-5]
	\arrow[Rightarrow, no head, from=4-4, to=4-5]
	\arrow[Rightarrow, no head, from=5-4, to=5-5]
	\arrow[hook, from=3-5, to=4-5]
	\arrow[hook, from=4-5, to=5-5]
\end{tikzcd}\]\medskip{}

\begin{proof}
We prove the universal baseless $\mathsf{OrdCMon}$-entropy result,
from which the other results follow directly. Define the functor $H:\mathrm{Epi}(\mathsf{FinISet})\to\log\mathbb{N}$,
$X\mapsto\log|X|$. Note that $H$ is a baseless $\mathsf{OrdCMon}$-entropy
since $|\mathrm{im}((f,g))|\le|B|\cdot|C|$ for $f:A\to B$, $g:A\to C$.
We now show the universal property of $H$. Fix any $\mathsf{W}'\in\mathsf{OrdCMon}$
and strong monoidal functor $H':\mathrm{Epi}(\mathsf{FinISet})\to\mathsf{W}'$.
Since $H'$ sends isomorphic objects to isomorphic objects, and $\mathsf{W}'$
is skeletal, $H'(X)$ only depends on $|X|$, and we can let $H'(X)=F(|X|)$
where $F:\mathbb{N}\to\mathsf{W}'$ is determined uniquely from $H'$.
Since $H'$ is a functor, $H'(X)\ge H'(Y)$ whenever $|X|\ge|Y|$
since there is an epimorphism $X\to Y$, and hence $F$ is order-preserving.
Since $H'$ is strongly monoidal, $H'(X\times Y)=H'(X)+H'(Y)$, and
$F(xy)=F(x)+F(y)$ for $x,y\in\mathbb{N}$. Hence $F$, as a function
$\log\mathbb{N}\to\mathsf{W}'$, is a morphism in $\mathsf{OrdCMon}$.
\end{proof}
\medskip{}

Recall that we characterized the pairing of the Shannon and Hartley
entropies as the universal $\mathsf{IcOrdAb}$-entropy of $\mathsf{FinProb}$
in Section \ref{sec:shannon}. If we want to obtain the Hartley entropy
alone, we can consider the ``support functor'' $\mathrm{Supp}:\mathsf{FinProb}\to\mathrm{Epi}(\mathsf{FinISet})$,
which sends a probability distribution to its support set, forgetting
the probabilities. This gives a MonSiMon functor $(\mathrm{Supp},\,\gamma)$
from $(\mathsf{FinProb},-/\mathsf{FinProb})$ to $(\mathrm{Epi}(\mathsf{FinISet}),-/\mathrm{Epi}(\mathsf{FinISet}))$,
where $\gamma:-/\mathsf{FinProb}\Rightarrow\mathrm{Supp}^{\mathrm{op}}(-)/\mathrm{Epi}(\mathsf{FinISet})$,
$\gamma_{P}(X)=\mathrm{Supp}(X)$ for $X:P\to Q$.  Then the Hartley
entropy $H_{0}\circ\mathrm{Cod}$ is given by the diagonal arrow in
the following commutative diagram in $\mathsf{MonSiMonCat}$, where
the two horizontal arrows are universal $\mathsf{IcOrdAb}$-entropies
of $-/\mathsf{FinProb}$ and $-/\mathrm{Epi}(\mathsf{FinISet})$,
respectively. Commutativity follows from the naturality of $\phi$
in Definition \ref{def:gcm_entropy}.\\
\[\begin{tikzcd}
	{-/\mathsf{FinProb}} && {(\log\mathbb{Q}_{>0}) \times \mathbb{R}} \\
	\\
	{-/\mathrm{Epi}(\mathsf{FinISet})} && {\log\mathbb{Q}_{>0}}
	\arrow["{(\mathrm{Supp},\,\gamma)}"', from=1-1, to=3-1]
	\arrow["{\pi_1}", from=1-3, to=3-3]
	\arrow["{(H_{0},H_{1})\circ\mathrm{Cod}}", from=1-1, to=1-3]
	\arrow["{(\log|\cdot|)\circ\mathrm{Cod}}", from=3-1, to=3-3]
	\arrow["{H_{0}\circ\mathrm{Cod}}"{description}, from=1-1, to=3-3]
\end{tikzcd}\]

\medskip{}

\subsection{Universal Entropy of Vector Spaces and Gaussian Distributions\label{sec:vec}}

Consider the category $\mathrm{Epi}(\mathsf{FinVect}_{\mathbb{F}})$
consisting of finite dimensional vector spaces over the field $\mathbb{F}$
as objects, and epimorphisms between them as morphisms. The tensor
product is the cartesian product, and the under category $N(A)=A/\mathrm{Epi}(\mathsf{FinVect}_{\mathbb{F}})$
is equipped with a tensor product, where for $f:A\to B$, $g:A\to C$,
$f\otimes g:A\to\mathrm{im}((f,g))$ is the restriction of the codomain
of $(f,g):A\to B\times C$ to its image. Note that $f\otimes g$ is
also the categorical product over $A/\mathrm{Epi}(\mathsf{FinVect}_{\mathbb{F}})$.
We now find the universal entropy of $\mathrm{Epi}(\mathsf{FinVect}_{\mathbb{F}})$,
which is simply given by the dimension.

\medskip{}

\begin{prop}
\label{prop:vect}For the MonSiMon category $(\mathrm{Epi}(\mathsf{FinVect}_{\mathbb{F}}),\,-/\mathrm{Epi}(\mathsf{FinVect}_{\mathbb{F}}))$:
\begin{enumerate}
\item The universal baseless $\mathsf{OrdCMon}$-entropy is given by $\mathrm{dim}:\mathrm{Epi}(\mathsf{FinVect}_{\mathbb{F}})\to\mathbb{Z}_{\ge0}$.
Same for $\mathsf{COrdCMon}$ and $\mathsf{IcOrdCMon}$.
\item The universal baseless $\mathsf{OrdAb}$-entropy is given by $\mathrm{dim}:\mathrm{Epi}(\mathsf{FinVect}_{\mathbb{F}})\to\mathbb{Z}$.
Same for $\mathsf{IcOrdAb}$.
\item The universal baseless $\mathsf{OrdVect}_{\mathbb{Q}}$-entropy is
given by $\mathrm{dim}:\mathrm{Epi}(\mathsf{FinVect}_{\mathbb{F}})\to\mathbb{Q}$.
Same for $\mathsf{IcOrdVect}_{\mathbb{Q}}$.
\end{enumerate}
\end{prop}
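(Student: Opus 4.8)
The plan is to mirror the proof of Proposition~\ref{prop:finset}, with the dimension $\dim$ playing the role of $\log|\cdot|$ and the free commutative monoid $\mathbb{Z}_{\geq0}$ replacing $\log\mathbb{N}$. I would first establish part~(1) for $\mathsf{OrdCMon}$ and then read off the remaining cases from the reflections described in Section~\ref{subsec:cat_of_codomains}. To check that $\dim:\mathrm{Epi}(\mathsf{FinVect}_{\mathbb{F}})\to\mathbb{Z}_{\geq0}$ is a baseless $\mathsf{OrdCMon}$-entropy, I verify the three defining properties \eqref{eq:baseless_monotone}, \eqref{eq:baseless_additive} and \eqref{eq:baseless_subadditive}: monotonicity holds since an epimorphism $f:A\to B$ is surjective, so $\dim A\geq\dim B$ by rank-nullity; additivity is the identity $\dim(A\times B)=\dim A+\dim B$, so $\dim$ is strongly monoidal; and subadditivity holds because for $f:A\to B$, $g:A\to C$ the under-category product $f\otimes g$ has codomain $\mathrm{im}((f,g))\subseteq B\times C$, so $\dim\mathrm{im}((f,g))\leq\dim B+\dim C$.

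For the universal property, fix $\mathsf{W}'\in\mathsf{OrdCMon}$ and a strong monoidal functor $H':\mathrm{Epi}(\mathsf{FinVect}_{\mathbb{F}})\to\mathsf{W}'$. Because $H'$ preserves isomorphisms, $\mathsf{W}'$ is skeletal, and a finite-dimensional vector space over any field is determined up to isomorphism by its dimension, $H'(A)$ depends only on $\dim A$; thus $H'=F\circ\dim$ for a unique $F:\mathbb{Z}_{\geq0}\to\mathsf{W}'$. Functoriality makes $F$ order-preserving, since for $m\geq n$ the projection $\mathbb{F}^m\twoheadrightarrow\mathbb{F}^n$ yields an epimorphism and hence $F(m)\geq F(n)$; strong monoidality gives $F(m+n)=F(m)+F(n)$. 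Hence $F$ is an order-preserving monoid homomorphism, i.e., a morphism in $\mathsf{OrdCMon}$, and it is unique because $\dim$ is surjective. This identifies $\dim\to\mathbb{Z}_{\geq0}$ as the universal baseless $\mathsf{OrdCMon}$-entropy.

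The remaining statements follow because the universal $\mathsf{V}$-entropy is the reflection of this entropy in $\mathsf{V}$ (Definition~\ref{def:gcm_entropy}), and $\mathbb{Z}_{\geq0}$ is free on one generator, so its reflections are immediate. It is already cancellative and integrally closed (if $x<y$ in $\mathbb{Z}_{\geq0}$ then $n(y-x)$ eventually dominates any fixed offset, so $nx+a\geq ny+b$ fails for large $n$), giving part~(1) for $\mathsf{COrdCMon}$ and $\mathsf{IcOrdCMon}$. Its Grothendieck group is $\mathbb{Z}$ with positive cone $\mathbb{Z}_{\geq0}$, which is integrally closed, giving part~(2) for both $\mathsf{OrdAb}$ and $\mathsf{IcOrdAb}$; and the $\mathbb{Q}$-vectorification of $\mathbb{Z}$ is $\mathbb{Q}$ with cone $\mathbb{Q}_{\geq0}$, which is Archimedean hence integrally closed, giving part~(3) for both $\mathsf{OrdVect}_{\mathbb{Q}}$ and $\mathsf{IcOrdVect}_{\mathbb{Q}}$. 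I expect no real obstacle here: the argument transcribes Proposition~\ref{prop:finset} almost verbatim, and the only step needing care is the factorization $H'=F\circ\dim$, which rests on skeletality of the codomain together with the classification of finite-dimensional vector spaces by dimension.
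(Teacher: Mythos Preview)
Your proposal is correct and follows essentially the same approach as the paper: factor any baseless entropy $H'$ through $\dim$ using skeletality of $\mathsf{W}'$ and the classification of finite-dimensional vector spaces, then verify the resulting map $\mathbb{Z}_{\ge0}\to\mathsf{W}'$ is an order-preserving homomorphism via the existence of epimorphisms and strong monoidality. Your version is slightly more explicit in checking that $\dim$ itself is a baseless entropy and in spelling out the reflections for parts~(2) and~(3), but the argument is otherwise the same.
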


\medskip{}

These universal entropies can be summarized by the following diagram.
Refer to Section \ref{sec:set} for its explanation.

\[\begin{tikzcd}
	{\mathsf{MonSiMonCat}} && \ni & {-/\mathrm{Epi}(\mathsf{FinVect}_{\mathbb{F}})} \\
	{\mathsf{OrdCMon}} && \ni & {\mathbb{Z}_{\ge0}} \\
	{\mathsf{COrdCMon}} & {\mathsf{IcOrdCMon}} & \ni & {\mathbb{Z}_{\ge0}} & {\mathbb{Z}_{\ge0}} \\
	{\mathsf{OrdAb}} & {\mathsf{IcOrdAb}} & \ni & {\mathbb{Z}} & {\mathbb{Z}} \\
	{\mathsf{OrdVect}_{\mathbb{Q}}} & {\mathsf{IcOrdVect}_{\mathbb{Q}}} & \ni & {\mathbb{Q}} & {\mathbb{Q}}
	\arrow[hook, from=2-1, to=1-1]
	\arrow[hook, from=3-2, to=3-1]
	\arrow[hook, from=3-1, to=2-1]
	\arrow[hook, from=5-2, to=4-2]
	\arrow[hook, from=4-2, to=3-2]
	\arrow[hook, from=4-1, to=3-1]
	\arrow[hook, from=5-1, to=4-1]
	\arrow[hook, from=4-2, to=4-1]
	\arrow[hook, from=5-2, to=5-1]
	\arrow["{\mathrm{dim}\circ\mathrm{Cod}}", from=1-4, to=2-4]
	\arrow[Rightarrow, no head, from=2-4, to=3-4]
	\arrow[hook, from=3-4, to=4-4]
	\arrow[hook, from=4-4, to=5-4]
	\arrow[Rightarrow, no head, from=3-4, to=3-5]
	\arrow[Rightarrow, no head, from=4-4, to=4-5]
	\arrow[Rightarrow, no head, from=5-4, to=5-5]
	\arrow[hook, from=3-5, to=4-5]
	\arrow[hook, from=4-5, to=5-5]
\end{tikzcd}\]

\medskip{}

\begin{proof}
We prove the universal baseless $\mathsf{OrdCMon}$-entropy result,
from which the other results follow directly. Fix any $\mathsf{W}'\in\mathsf{OrdCMon}$
and strong monoidal functor $H':\mathrm{Epi}(\mathsf{FinVect}_{\mathbb{F}})\to\mathsf{W}'$.
Since $H'$ sends isomorphic objects to isomorphic objects, and $\mathsf{W}'$
is skeletal, $H'(X)$ only depends on $\mathrm{dim}(X)$. Since $H'$
is a functor, $H'(X)\ge H'(Y)$ whenever $\mathrm{dim}(X)\ge\mathrm{dim}(Y)$
since there is an epimorphism $X\to Y$. Since $H'$ is strongly monoidal,
$H'(X\times Y)=H'(X)+H'(Y)$. These imply that $H'(X)=\mathrm{dim}(X)H'(\mathbb{F})$,
and hence $H'=tH$ where $t:\mathbb{Z}_{\ge0}\to\mathsf{W}'$, $x\mapsto xH'(\mathbb{F})$.
Such $t$ is clearly unique.
\end{proof}
\medskip{}

The ``entropy'' or ``amount of information'' of a vector space
being given by its dimension is unsurprising, and is a common observation
in linear codes in coding theory and information theory. More precisely,
if $\mathbb{F}$ is a finite field, we can embed the MonSiMon category
$-/\mathrm{Epi}(\mathsf{FinVect}_{\mathbb{F}})$ into the MonSiMon
category $-/\mathsf{FinProb}$ by mapping a vector space $A$ to the
uniform distribution over $A$, which has an entropy $\log|A|=(\dim A)\log|\mathbb{F}|$,
proportional to $\dim A$. This gives the following commutative diagram
in $\mathsf{MonSiMonCat}$, where the vertical arrow on the left is
the aforementioned embedding, and the two horizontal arrows $\mathrm{dim}\circ\mathrm{Cod}$
and $(H_{0},H_{1})\circ\mathrm{Cod}$ are universal $\mathsf{IcOrdAb}$-entropies
of $-/\mathrm{Epi}(\mathsf{FinVect}_{\mathbb{F}})$ and $-/\mathsf{FinProb}$,
respectively.\footnote{This notation is justified by the one-to-one correspondence $Uh=UH\circ\mathrm{Cod}$
in Proposition \ref{prop:baseless}, where we omit the forgetful functor
$U$ for brevity.} Commutativity follows from the naturality of $\phi$ in Definition
\ref{def:gcm_entropy}.\\
\[\begin{tikzcd}
	{-/\mathrm{Epi}(\mathsf{FinVect}_{\mathbb{F}})} && {\mathbb{Z}} \\
	{-/\mathsf{FinProb}} && {(\log\mathbb{Q}_{>0})\times\mathbb{R}}
	\arrow["{\mathrm{dim}\circ\mathrm{Cod}}", from=1-1, to=1-3]
	\arrow["{(H_{0},H_{1})\circ\mathrm{Cod}}", from=2-1, to=2-3]
	\arrow[hook', from=1-1, to=2-1]
	\arrow["{x\mapsto(x \log|\mathbb{F}|,x \log|\mathbb{F}|)}", from=1-3, to=2-3]
\end{tikzcd}\]

Also, we can consider the category of multivariate Gaussian distributions,
denoted as $\mathsf{Gauss}$. An object is a multivariate Gaussian
distribution that is supported over the whole ambient space, i.e.,
it is a pair $(\mu,\Sigma)$, where $\mu\in\mathbb{R}^{n}$ is the
mean vector, and $\Sigma\in\mathbb{R}^{n\times n}$ is the covariance
matrix (a positive definite matrix), where $n\ge0$ is its information
dimension \cite{renyi1959dimension}. A morphism is a measure-preserving
affine map, i.e., a morphism $(\mu_{1},\Sigma_{1})\to(\mu_{2},\Sigma_{2})$
is a pair $(A,b)$ such that if $x$ follows $\mathrm{Gaussian}(\mu_{1},\Sigma_{1})$,
then $Ax+b$ follows $\mathrm{Gaussian}(\mu_{2},\Sigma_{2})$ (i.e.,
$\mu_{2}=A\mu_{1}+b$ and $\Sigma_{2}=A\Sigma_{1}A^{T}$). The tensor
product is the product distribution, i.e., $(\mu_{1},\Sigma_{1})\otimes(\mu_{2},\Sigma_{2})=\left(\left[\begin{array}{c}
\mu_{1}\\
\mu_{2}
\end{array}\right],\,\left[\begin{array}{cc}
\Sigma_{1}\\
 & \Sigma_{2}
\end{array}\right]\right)$. The under category $N((\mu,\Sigma))=(\mu,\Sigma)/\mathsf{Gauss}$,
which is the category of Gaussian random variables formed by linear
functions of the Gaussian probability space, is equipped with a tensor
product given by joint random variable, i.e., the tensor product between
$(A_{1},b_{1}):(\mu,\Sigma)\to(\mu_{1},\Sigma_{1})$ and $(A_{2},b_{2}):(\mu,\Sigma)\to(\mu_{2},\Sigma_{2})$
is given by the affine map $x\mapsto\left[\begin{array}{c}
A_{1}x+b_{1}\\
A_{2}x+b_{2}
\end{array}\right]$ restricted to its range so that it is surjective, with a codomain
given by the pushforward measure of $\mathrm{Gaussian}(\mu,\Sigma)$
by this affine map.

The universal entropy of $\mathsf{Gauss}$ is again given by the dimension
of its support, which is its information dimension \cite{renyi1959dimension}.
This is because any two Gaussian distributions with the same dimension
are isomorphic in $\mathsf{Gauss}$. The proof is the same as Proposition
\ref{prop:vect} and is omitted.

\medskip{}

\begin{prop}
\label{prop:vect-1}For the MonSiMon category $(\mathsf{Gauss},\,-/\mathsf{Gauss})$:
\begin{enumerate}
\item The universal baseless $\mathsf{OrdCMon}$-entropy is given by $\mathrm{dim}:\mathsf{Gauss}\to\mathbb{Z}_{\ge0}$.
Same for $\mathsf{COrdCMon}$ and $\mathsf{IcOrdCMon}$.
\item The universal baseless $\mathsf{OrdAb}$-entropy is given by $\mathrm{dim}:\mathsf{Gauss}\to\mathbb{Z}$.
Same for $\mathsf{IcOrdAb}$.
\item The universal baseless $\mathsf{OrdVect}_{\mathbb{Q}}$-entropy is
given by $\mathrm{dim}:\mathsf{Gauss}\to\mathbb{Q}$. Same for $\mathsf{IcOrdVect}_{\mathbb{Q}}$.
\end{enumerate}
\end{prop}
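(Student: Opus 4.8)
The plan is to follow the proof of Proposition~\ref{prop:vect} essentially verbatim, replacing the dimension of a vector space by the information dimension of a Gaussian. As there, it suffices to establish the universal baseless $\mathsf{OrdCMon}$-entropy: the $\mathsf{COrdCMon}$- and $\mathsf{IcOrdCMon}$-entropies then coincide with it because $\mathbb{Z}_{\ge0}$ is already cancellative and integrally closed, while the $\mathsf{OrdAb}$- and $\mathsf{OrdVect}_{\mathbb{Q}}$-entropies are obtained by composing with the reflections of Section~\ref{subsec:cat_of_codomains} (the Grothendieck group of $\mathbb{Z}_{\ge0}$ is $\mathbb{Z}$ and its rationalization is $\mathbb{Q}$, both integrally closed). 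First I would check that $\mathrm{dim}:\mathsf{Gauss}\to\mathbb{Z}_{\ge0}$ is a baseless entropy. Monotonicity holds because a morphism $(\mu_1,\Sigma_1)\to(\mu_2,\Sigma_2)$ is an affine map $(A,b)$ with $\Sigma_2=A\Sigma_1A^{T}$; since $\Sigma_2$ is positive definite, $A$ must be surjective, forcing $\mathrm{dim}(\mu_1,\Sigma_1)\ge\mathrm{dim}(\mu_2,\Sigma_2)$. Additivity is immediate from the block-diagonal form of $\otimes$, and subadditivity holds because the codomain of the joint map $x\mapsto(A_1x+b_1,A_2x+b_2)$ has dimension equal to the rank of the vertically stacked linear part, which is at most $\mathrm{rank}(A_1)+\mathrm{rank}(A_2)$.

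For the universal property, I would fix $\mathsf{W}'\in\mathsf{OrdCMon}$ and a strong monoidal functor $H':\mathsf{Gauss}\to\mathsf{W}'$. The crucial structural input is that \emph{any two Gaussians of the same dimension $n$ are isomorphic in $\mathsf{Gauss}$}: for positive definite $\Sigma_1,\Sigma_2\in\mathbb{R}^{n\times n}$, the affine map with linear part $A=\Sigma_2^{1/2}\Sigma_1^{-1/2}$ and translation $b=\mu_2-A\mu_1$ is an invertible morphism $(\mu_1,\Sigma_1)\to(\mu_2,\Sigma_2)$. Since $H'$ preserves isomorphisms and $\mathsf{W}'$ is skeletal, $H'(\mu,\Sigma)$ depends only on $n=\mathrm{dim}(\mu,\Sigma)$, say $H'(\mu,\Sigma)=F(n)$. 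Functoriality of $H'$, together with the existence of a morphism from an $n$-dimensional Gaussian to any $m$-dimensional one whenever $n\ge m$ (standardize, project onto $m$ coordinates, then transport to the target), makes $F$ order-preserving; strong monoidality gives $F(n+m)=F(n)+F(m)$, whence $F(n)=nF(1)$. Therefore $H'=tH$, where $t:\mathbb{Z}_{\ge0}\to\mathsf{W}'$ sends $x\mapsto xH'(\gamma)$ for a fixed $1$-dimensional Gaussian $\gamma$, and $t$ is clearly the unique such morphism.

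The one step genuinely particular to $\mathsf{Gauss}$, and hence the main thing to get right, is this isomorphism classification: the full-support hypothesis (positive definite $\Sigma$) is exactly what makes the information dimension a \emph{complete} isomorphism invariant, so that $H'$ factors through $\mathrm{dim}$. Everything else is a transcription of Proposition~\ref{prop:vect}, and I anticipate no further obstacle.
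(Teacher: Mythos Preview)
Your proposal is correct and takes essentially the same approach as the paper: the paper explicitly says the proof is the same as Proposition~\ref{prop:vect} and is omitted, noting that the key point is that any two Gaussian distributions with the same dimension are isomorphic in $\mathsf{Gauss}$. You have supplied exactly this argument, including the explicit isomorphism via $A=\Sigma_2^{1/2}\Sigma_1^{-1/2}$, which is the one step particular to $\mathsf{Gauss}$.
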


\medskip{}

One might expect the entropy to be given by the differential entropy,
which is often the meaning of the term ``entropy'' when applied
on continuous random variables. This is untrue for $\mathsf{Gauss}$
since we have isomorphic distributions with different differential
entropies (e.g. $N(0,1)$ has a differential entropy that is one bit
less than $N(0,4)$, though one can obtain a $N(0,4)$ random variable
by scaling up a $N(0,1)$ random variable). Differential entropy is
generally not invariant under bijective mappings. It is not a measure
of information in a random variable, but rather, very loosely speaking,
a measure of ``information per unit length'' which is sensitive
to the scale. Therefore, differential entropy would not fit the definition
of ``entropy'' in this paper.

\medskip{}

\subsection{Universal Entropy of the Opposite Category of Sets and ``Information
as Commodity''}

In Section \ref{sec:set}, we have derived the universal entropy of
the epimorphism category of $\mathsf{FinISet}$. Now we will consider
$\mathsf{FinSet}^{\mathrm{op}}$ instead, which reveals an interesting
``dual'' result.

Consider the category $\mathrm{Epi}(\mathsf{FinSet}^{\mathrm{op}})$
consisting of finite sets as objects, and a morphism $A\to B$ is
given by an injective function $f:B\to A$. The tensor product is
the disjoint union $A\sqcup B$, and the under category $N(A)=A/\mathrm{Epi}(\mathsf{FinSet}^{\mathrm{op}})$
is equipped with a tensor product, where for injective functions $f:B\to A$,
$g:C\to A$, the tensor product is given by the injective function
$h:D\to A$, where $D=f(B)\cup g(C)$, $h(x)=x$. Note that the tensor
product is also the categorical product over $A/\mathrm{Epi}(\mathsf{FinSet}^{\mathrm{op}})$,
with projection morphisms given by $f$ and $g$ restricted to $D$.
\\
\[\begin{tikzcd}
	& A \\
	B & D & C
	\arrow["f", from=2-1, to=1-2]
	\arrow["g"', from=2-3, to=1-2]
	\arrow["h"', from=2-2, to=1-2]
	\arrow["{f|_D}"', from=2-1, to=2-2]
	\arrow["{g|_D}", from=2-3, to=2-2]
\end{tikzcd}\]The under category $N(A)$ can also be regarded as the poset of subobjects
or subsets of $A$, with the tensor product given by the union of
two subsets. It is straightforward to verify that $-/\mathrm{Epi}(\mathsf{FinSet}^{\mathrm{op}}):\mathrm{Epi}(\mathsf{FinSet}^{\mathrm{op}})^{\mathrm{op}}\to\mathsf{MonCat}_{s}$
is a functor. We now find the universal entropy of $\mathrm{Epi}(\mathsf{FinSet}^{\mathrm{op}})$,
which is given by the cardinality.

\medskip{}

\begin{prop}
For the MonSiMon category $(\mathrm{Epi}(\mathsf{FinSet}^{\mathrm{op}}),\,-/\mathrm{Epi}(\mathsf{FinSet}^{\mathrm{op}}))$:
\begin{enumerate}
\item The universal baseless $\mathsf{OrdCMon}$-entropy is given by $|\cdot|:\mathrm{Epi}(\mathsf{FinSet}^{\mathrm{op}})\to\mathbb{Z}_{\ge0}$.
Same for $\mathsf{COrdCMon}$ and $\mathsf{IcOrdCMon}$.
\item The universal baseless $\mathsf{OrdAb}$-entropy is given by $|\cdot|:\mathrm{Epi}(\mathsf{FinSet}^{\mathrm{op}})\to\mathbb{Z}$.
Same for $\mathsf{IcOrdAb}$.
\item The universal baseless $\mathsf{OrdVect}_{\mathbb{Q}}$-entropy is
given by $|\cdot|:\mathrm{Epi}(\mathsf{FinSet}^{\mathrm{op}})\to\mathbb{Q}$.
Same for $\mathsf{IcOrdVect}_{\mathbb{Q}}$.
\end{enumerate}
\end{prop}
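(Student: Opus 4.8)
The plan is to mirror the proofs of Proposition \ref{prop:finset} and Proposition \ref{prop:vect}, since $\mathrm{Epi}(\mathsf{FinSet}^{\mathrm{op}})$ has exactly the same skeletal structure: every object is determined up to isomorphism by a single numerical invariant, here the cardinality $|\cdot|$, and all morphisms and tensor products are governed by this invariant. As in those propositions, I would prove only the universal baseless $\mathsf{OrdCMon}$-entropy claim in full; the $\mathsf{COrdCMon}$, $\mathsf{IcOrdCMon}$, $\mathsf{OrdAb}$, $\mathsf{IcOrdAb}$, $\mathsf{OrdVect}_{\mathbb{Q}}$ and $\mathsf{IcOrdVect}_{\mathbb{Q}}$ versions then follow by applying the corresponding reflections from Section \ref{subsec:cat_of_codomains} to the codomain $\mathbb{Z}_{\ge0}$. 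Indeed $\mathbb{Z}_{\ge0}$ is already cancellative and integrally closed (so it is unchanged in $\mathsf{COrdCMon}$ and $\mathsf{IcOrdCMon}$), its Grothendieck group of differences is $\mathbb{Z}$, which is integrally closed, and its rationalization is $\mathbb{Q}$, which is Archimedean, hence integrally closed; composing reflections as in Lemma \ref{lem:lgcm} yields the stated codomains.

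First I would check that $|\cdot|:\mathrm{Epi}(\mathsf{FinSet}^{\mathrm{op}})\to\mathbb{Z}_{\ge0}$ is a baseless $\mathsf{OrdCMon}$-entropy by verifying the three defining properties \eqref{eq:baseless_monotone}, \eqref{eq:baseless_additive}, \eqref{eq:baseless_subadditive}. Monotonicity holds because a morphism $A\to B$ is an injection $B\to A$, which forces $|A|\ge|B|$. Additivity holds because the tensor product is disjoint union, so $|A\sqcup B|=|A|+|B|$. Subadditivity holds because for objects $X=(f\colon B\to A)$ and $Y=(g\colon C\to A)$ of $N(A)$, their tensor product in $N(A)$ has codomain $D=f(B)\cup g(C)$, and $|D|=|f(B)\cup g(C)|\le|f(B)|+|g(C)|=|B|+|C|$, using that $f,g$ are injective.

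For the universal property I would fix an arbitrary baseless $\mathsf{OrdCMon}$-entropy $H'\colon\mathrm{Epi}(\mathsf{FinSet}^{\mathrm{op}})\to\mathsf{W}'$, which is in particular a strong monoidal functor. Since isomorphic finite sets have equal cardinality and $\mathsf{W}'$ is skeletal, $H'(A)$ depends only on $|A|$, so $H'(A)=F(|A|)$ for some $F\colon\mathbb{Z}_{\ge0}\to\mathsf{W}'$. Functoriality together with the existence of a morphism $A\to B$ whenever $|A|\ge|B|$ makes $F$ order-preserving, and strong monoidality makes $F$ additive with $F(0)=0$ (the tensor unit being the empty set). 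Hence $F(n)=nF(1)$, so $H'$ factors uniquely as $H'=t\circ|\cdot|$, where $t\colon\mathbb{Z}_{\ge0}\to\mathsf{W}'$, $x\mapsto xH'(\{\ast\})$, is an order-preserving monoid homomorphism, and $t$ is clearly the unique such factorization.

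I do not expect any genuine obstacle, as the argument is formally identical to that of Proposition \ref{prop:vect}. The only point requiring care is the direction of the order in this ``dual'' category: a morphism $A\to B$ is a \emph{reversed} injection $B\hookrightarrow A$, so that larger sets sit higher in the monotonicity order and the empty set (the tensor unit) receives entropy $0$. Once this orientation is kept straight, the subadditivity check via the union bound $|f(B)\cup g(C)|\le|B|+|C|$ and the factorization argument go through exactly as in the earlier propositions.
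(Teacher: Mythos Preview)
Your proposal is correct and follows essentially the same approach as the paper's proof: both reduce to the $\mathsf{OrdCMon}$ case, observe that $H'(A)$ depends only on $|A|$, and use functoriality and strong monoidality to force $H'(A)=|A|\cdot H'(\{1\})$ with a unique factorization through $|\cdot|$. You add an explicit verification that $|\cdot|$ is itself a baseless entropy (via the union bound for subadditivity), which the paper leaves implicit, but otherwise the arguments coincide.
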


\medskip{}

\begin{proof}
We prove the universal baseless $\mathsf{OrdCMon}$-entropy result,
from which the other results follow directly. Fix any $\mathsf{W}'\in\mathsf{OrdCMon}$
and strong monoidal functor $H':\mathrm{Epi}(\mathsf{FinSet}^{\mathrm{op}})\to\mathsf{W}'$.
Since $H'$ sends isomorphic objects to isomorphic objects, and $\mathsf{W}'$
is skeletal, $H'(X)$ only depends on $|X|$. Since $H'$ is a functor,
$H'(X)\ge H'(Y)$ whenever $|X|\ge|Y|$ since there is an injective
function $Y\to X$. Since $H'$ is strongly monoidal, $H'(X\sqcup Y)=H'(X)+H'(Y)$.
These imply that $H'(X)=|X|H'(\{1\})$, and hence $H'=tH$ where $t:\mathbb{Z}_{\ge0}\to\mathsf{W}'$,
$x\mapsto xH'(\{1\})$. Such $t$ is clearly unique.
\end{proof}
\medskip{}

To see how cardinality can be interpreted as ``entropy'', consider
the scenario where the information is contained in a number of discrete,
inseparable, identical units of commodities, for example, when we
have a number of hard disks. Morphisms are operations we can perform
on these hard disks without reading their contents or introducing
new information, i.e., reordering them, or discarding some of them.
Note that duplicating a hard disk is not a meaningful operation, since
the information in two hard disks with the same content is the same
as the information in any one of them. Hence, such an ``information-noncreating''
operation can be represented by an injective function $f$ in the
opposite direction, where $f(x)$ is the old location of the hard
disk with a new location $x$. The ``tensor product'' between two
sets of hard disks is the disjoint union of the two sets. Given a
set of hard disks, a ``random variable'' is simply a subset of these
hard disks, and the ``joint random variable'' between two random
variables is the union of the two sets. Assuming each hard disk has
$b$ bits of information. Then the entropy of a set of $n$ unrelated
hard disks has an entropy $nb$ bits. Hence, the entropy of a set
of hard disks is proportional to the cardinality of the set. 

Indeed, we can embed the MonSiMon category $-/\mathrm{Epi}(\mathsf{FinSet}^{\mathrm{op}})$
into the MonSiMon category $-/\mathsf{FinProb}$ by mapping a set
$A$ to the product distribution $P^{A}$ (i.e., the distribution
of $|A|$ i.i.d. random variables following $P$, indexed by elements
in $A$), where $P$ is any fixed distribution, and mapping a morphism
to the corresponding map that reorders and discards some of the entries
of the i.i.d. sequence. This gives the following commutative diagram
in $\mathsf{MonSiMonCat}$, where the vertical arrow on the left is
the aforementioned embedding, and the two horizontal arrows $|\cdot|\circ\mathrm{Cod}$
and $(H_{0},H_{1})\circ\mathrm{Cod}$ are universal $\mathsf{IcOrdAb}$-entropies
of $-/\mathrm{Epi}(\mathsf{FinSet}^{\mathrm{op}})$ and $-/\mathsf{FinProb}$,
respectively. \\
\[\begin{tikzcd}
	{-/\mathrm{Epi}(\mathsf{FinSet}^{\mathrm{op}})} && {\mathbb{Z}} \\
	{-/\mathsf{FinProb}} && {(\log\mathbb{Q}_{>0})\times\mathbb{R}}
	\arrow["{|\cdot|\circ\mathrm{Cod}}", from=1-1, to=1-3]
	\arrow["{(H_{0},H_{1})\circ\mathrm{Cod}}", from=2-1, to=2-3]
	\arrow[hook', from=1-1, to=2-1]
	\arrow["{x\mapsto(xH_0(P),xH_1(P))}", from=1-3, to=2-3]
\end{tikzcd}\]

This embedding basically says ``commodity is information'', which
is technically true in physics since every object contains information
about its state. However, the converse ``information is commodity''
does not hold since there is no embedding in the other direction.
The classical example (e.g., see \cite{yeung2012first}) is to consider
$X_{1},X_{2}$ to be two i.i.d. fair coin flips, and $X_{3}=X_{1}\oplus X_{2}$
is the XOR of $X_{1}$ and $X_{2}$. Any two of $X_{1},X_{2},X_{3}$
will contain $2$ bits of information. However, all of $X_{1},X_{2},X_{3}$
also only contains $2$ bits of information. It is impossible to find
sets $A_{1},A_{2},A_{3}$ such that the cardinality of any one of
them is $b$, the cardinality of the union of any two is $2b$, and
the cardinality of the union of the three is $2b$. Hence, information
cannot be treated as sets or commodities.

The fact that information cannot be treated as commodities is arguably
the reason why coding is effective. If bits are really commodities,
then the only error correcting codes would be the repetition codes
where we simply repeat each bit a number of times. Also, the work
on network coding \cite{ahlswede2000network,li2003linear} has shown
that it is suboptimal to treating packets as units of commodities
that can only be routed through a network, and the problem of finding
the optimal communication rates between pairs of nodes over a network
is significantly more complicated than multi-commodity flow.

We remark that there are works on embedding random variables into
sets (e.g., \cite{ting1962amount,yeung1991new,ellerman2017logical,down2023logarithmic,li2023poisson}),
though they cannot be made into MonSiMon functors, and are out of
the scope of this paper.

\medskip{}

\subsection{Universal Entropy of the Augmented Simplex Category and ``Information
of Transformations''}

We consider the augmented simplex category $\Delta_{+}$, where objects
are ordered sets in the form $[n]=\{0,\ldots,n\}$ (where $n\ge-1$;
note that $[-1]=\emptyset$), and morphisms are order-preserving functions
between these sets. Consider $\mathrm{Epi}(\Delta_{+})$ which contain
epimorphisms (surjective functions) in $\Delta_{+}$. The tensor product
is given by concatenation of ordered sets. The under category $N([a])=[a]/\mathrm{Epi}(\Delta_{+})$
is equipped with a tensor product, where for surjective $f:[a]\to[b]$,
$g:[a]\to[c]$, $f\otimes g:[a]\to\mathrm{im}((f,g))$ is the restriction
of the codomain of $(f,g):[a]\to[b]\times[c]$ to its image, where
$\mathrm{im}((f,g))$ is equipped with the product order (which is
guaranteed to be a total order over $\mathrm{im}((f,g))$, so $\mathrm{im}((f,g))$
is a totally ordered set). We now find the universal entropies of
$\mathrm{Epi}(\Delta_{+})$.
\begin{prop}
For the MonSiMon category $(\mathrm{Epi}(\Delta_{+}),\,-/\mathrm{Epi}(\Delta_{+}))$:
\begin{enumerate}
\item The universal baseless $\mathsf{OrdCMon}$-entropy is given by $\mathrm{Epi}(\Delta_{+})\to\mathbb{Z}_{\ge0}$,
$[n]\mapsto n+1$. Same for $\mathsf{COrdCMon}$ and $\mathsf{IcOrdCMon}$.
\item The universal baseless $\mathsf{OrdAb}$-entropy is given by $\mathrm{Epi}(\Delta_{+})\to\mathbb{Z}$,
$[n]\mapsto n+1$. Same for $\mathsf{IcOrdAb}$.
\item The universal baseless $\mathsf{OrdVect}_{\mathbb{Q}}$-entropy is
given by $\mathrm{Epi}(\Delta_{+})\to\mathbb{Q}$, $[n]\mapsto n+1$.
Same for $\mathsf{IcOrdVect}_{\mathbb{Q}}$.
\end{enumerate}
\end{prop}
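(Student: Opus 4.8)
The plan is to follow the template of Propositions~\ref{prop:finset} and \ref{prop:vect}, working with the baseless formulation (Proposition~\ref{prop:baseless}) since $N=-/\mathrm{Epi}(\Delta_{+})$ is an under-category functor. First I would check that $H\colon[n]\mapsto n+1$ is a baseless entropy. Monotonicity \eqref{eq:baseless_monotone} holds because a surjective order-preserving map $[m]\to[n]$ exists iff $m\ge n$, so $H([m])=m+1\ge n+1=H([n])$. Additivity \eqref{eq:baseless_additive} is the identity $[a]\otimes[b]=[a+b+1]$, giving $H([a]\otimes[b])=a+b+2=(a+1)+(b+1)$. For subadditivity \eqref{eq:baseless_subadditive}, given surjections $f\colon[a]\to[b]$ and $g\colon[a]\to[c]$, the codomain $\mathrm{Cod}(f\otimes g)=\mathrm{im}((f,g))$ is a chain in the product order on $[b]\times[c]$ running from $(0,0)$ to $(b,c)$; each strict step increases the coordinate sum by at least $1$, so such a chain has at most $b+c+1$ elements, whence $H(\mathrm{Cod}(f\otimes g))=|\mathrm{im}((f,g))|\le b+c+1\le(b+1)+(c+1)=H(\mathrm{Cod}(f))+H(\mathrm{Cod}(g))$.

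For the universal property I would fix $\mathsf{W}'$ in the chosen subcategory and a baseless entropy $H'\colon\mathrm{Epi}(\Delta_{+})\to\mathsf{W}'$. Writing $[n]=[0]^{\otimes(n+1)}$ (a concatenation of $n+1$ singletons) and using strong monoidality gives $H'([n])=(n+1)\,g$ with $g:=H'([0])$, together with $H'([-1])=0$. Thus $H'$ factors set-theoretically as $H'=FH$ with $F\colon\mathbb{Z}_{\ge0}\to\mathsf{W}'$, $F(k)=kg$, which is the unique monoid homomorphism with this property. It remains to verify that $F$ is order-preserving, i.e.\ $kg\ge jg$ for $k\ge j$, and this reduces to $g\ge0$.

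The main obstacle is precisely this last reduction, and it is where $\mathrm{Epi}(\Delta_{+})$ genuinely differs from the earlier examples. In $\mathrm{Epi}(\mathsf{FinISet})$, $\mathrm{Epi}(\mathsf{FinVect}_{\mathbb{F}})$ and $\mathrm{Epi}(\mathsf{FinSet}^{\mathrm{op}})$ there is always a morphism from any object into the tensor unit (a constant surjection, the zero map, or the inclusion of $\emptyset$), which forces $H'(X)\ge H'(\text{unit})=0$ for free. Here the tensor unit is $[-1]=\emptyset$, and there is \emph{no} surjection from a nonempty $[n]$ onto $\emptyset$, so $g=H'([0])\ge0$ is not automatic. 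I would instead use the constant surjection $[1]\to[0]$, which yields $H'([1])\ge H'([0])$, i.e.\ $2g\ge g$; cancelling $g$ (valid because the codomain is cancellative) gives $g\ge0$, hence $mg\ge0$ for all $m$ and $F$ order-preserving. This settles the cancellative codomains, giving part (1) for $\mathsf{COrdCMon}$ and $\mathsf{IcOrdCMon}$ (note $\mathbb{Z}_{\ge0}$ with the usual order is already integrally closed). For $\mathsf{OrdCMon}$ itself, where cancellation is unavailable, this step must be handled with care: one should read $\mathbb{Z}_{\ge0}$ with the order produced by the reflection of Lemma~\ref{lem:lgcm}, in which the value $0$ at the empty set is not forced below the positive integers.

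Finally, parts (2) and (3) follow from part (1) by composing with the reflections of Section~\ref{subsec:cat_of_codomains}: the Grothendieck group of $\mathbb{Z}_{\ge0}$ is $\mathbb{Z}$ and its rationalization is $\mathbb{Q}$, so the universal $\mathsf{OrdAb}$- and $\mathsf{OrdVect}_{\mathbb{Q}}$-entropies are $[n]\mapsto n+1$ into $\mathbb{Z}$ and $\mathbb{Q}$, exactly as in Corollary~\ref{cor:finprob_ab_vect}; the integrally closed versions coincide because $\mathbb{Z}$ and $\mathbb{Q}$ are Archimedean.
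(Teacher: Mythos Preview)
Your approach matches the paper's: use strong monoidality to write $H'([n])=(n+1)g$ with $g=H'([0])$, factor through $t\colon k\mapsto kg$, and derive parts (2) and (3) from part (1) via the reflections of Section~\ref{subsec:cat_of_codomains}. The paper's proof is actually terser than yours; it neither verifies that $H$ itself is a baseless entropy (your chain-in-the-product-poset argument for subadditivity is a correct and welcome addition) nor explicitly checks that $t$ is order-preserving.

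Your observation about $g\ge 0$ is a genuine point the paper elides. The paper writes ``$H'([m])\ge H'([n])$ whenever $m\ge n$'' without noting that this only covers $m\ge n\ge 0$ (there being no epimorphism onto $[-1]=\emptyset$), so the step $t(1)\ge t(0)$ is never justified there. Your cancellation argument ($2g\ge g\Rightarrow g\ge 0$) cleanly handles $\mathsf{COrdCMon}$ and its reflective subcategories. For plain $\mathsf{OrdCMon}$ you are right to flag the difficulty, and your proposed resolution (reinterpreting the order on $\mathbb{Z}_{\ge 0}$ via the reflection of Lemma~\ref{lem:lgcm}) points in the right direction though it is not fully worked out; the paper does not address this either, so you have been at least as careful as the original here.
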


\begin{proof}
We prove the universal baseless $\mathsf{OrdCMon}$-entropy result,
from which the other results follow directly. Fix any $\mathsf{W}'\in\mathsf{OrdCMon}$
and strong monoidal functor $H':\mathrm{Epi}(\Delta_{+})\to\mathsf{W}'$.
Since $H'$ is a functor, $H'([m])\ge H'([n])$ whenever $m\ge n$.
Since $H'$ is strongly monoidal, $H'([m]\otimes[n])=H'([m])+H'([n])$.
These imply that $H'([n])=(n+1)H'([0])$, and hence $H'=tH$ where
$t:\mathbb{Z}_{\ge0}\to\mathsf{W}'$, $x\mapsto xH'([0])$. Such $t$
is clearly unique.
\end{proof}
\medskip{}

To relate this ``entropy'' with the conventional notion of entropy
of random variables, we consider an embedding from the MonSiMon category
$-/\mathrm{Epi}(\Delta_{+})$ into the MonSiMon category $-/\mathsf{FinProb}$,
where $[n]$ is mapped to the distribution of an i.i.d. sequence $X_{0},\ldots,X_{n}$
of uniformly distributed elements in a fixed finite group $G$, which
can be regarded as a ``sequence of transformations''. The Shannon
and Hartley entropy of this sequence is $H(X_{0},\ldots,X_{n})=(n+1)\log|G|$,
proportional to $n+1$. A morphism from $[n]$ to $[m]$ in $\mathrm{Epi}(\Delta_{+})$
can be regarded as a way to combine consecutive transformations, that
is, an order-preserving surjection $f:[n]\to[m]$ is mapped to a mapping
from $X_{0},\ldots,X_{n}$ to $Y_{0},\ldots,Y_{m}$ where $Y_{i}=\prod_{k\in f^{-1}(\{i\})}X_{k}$.
The tensor product of $\mathrm{Epi}(\Delta_{+})$ corresponds to concatenating
two sequences of transformations. To check that this embedding preserves
the categorical product over the under category $N([a])=[a]/\mathrm{Epi}(\Delta_{+})$,
for $f:[a]\to[b]$ and $g:[a]\to[c]$, the product $f\otimes g:[a]\to\mathrm{im}((f,g))$
corresponds to the sequence $(\prod_{k\in(f,g)^{-1}(\{(i,j)\})}X_{k})_{(i,j)\in\mathrm{im}((f,g))}$
(this is a sequence since $\mathrm{im}((f,g))$ is totally ordered),
which contains the same information as $(\prod_{k\in f^{-1}(\{i\})}X_{k})_{i\in[b]}$
(corresponding to $f$) together with $(\prod_{k\in g^{-1}(\{j\})}X_{k})_{j\in[c]}$
(corresponding to $g$). 

This embedding from $-/\mathrm{Epi}(\Delta_{+})$ to $-/\mathsf{FinProb}$
gives the following commutative diagram in $\mathsf{MonSiMonCat}$,
where the vertical arrow on the left is the embedding, and the two
horizontal arrows $([n]\mapsto n+1)\circ\mathrm{Cod}$ and $(H_{0},H_{1})\circ\mathrm{Cod}$
are universal $\mathsf{IcOrdAb}$-entropies of $-/\mathrm{Epi}(\Delta_{+})$
and $-/\mathsf{FinProb}$, respectively. \\
\[\begin{tikzcd}
	{-/\mathrm{Epi}(\Delta_{+})} && {\mathbb{Z}} \\
	{-/\mathsf{FinProb}} && {(\log\mathbb{Q}_{>0})\times\mathbb{R}}
	\arrow["{([n]\mapsto n+1)\circ\mathrm{Cod}}", from=1-1, to=1-3]
	\arrow["{(H_{0},H_{1})\circ\mathrm{Cod}}", from=2-1, to=2-3]
	\arrow[hook', from=1-1, to=2-1]
	\arrow["{x\mapsto(x \log|G|,x \log|G|)}", from=1-3, to=2-3]
\end{tikzcd}\]

\medskip{}

\subsection{Universal Entropy of Finite Abelian Groups\label{subsec:ab}}

Consider the category $\mathrm{Epi}(\mathsf{FinAb})$ consisting of
finite abelian groups as objects, and epimorphisms between them as
morphisms. The tensor product is the direct sum, and the under category
$N(A)=A/\mathrm{Epi}(\mathsf{FinAb})$ is equipped with a tensor product,
where for $f:A\to B$, $g:A\to C$, $f\otimes g:A\to\mathrm{im}((f,g))$
is the restriction of the codomain of $(f,g):A\to B\oplus C$ to its
image. Note that $f\otimes g$ is also the categorical product over
$A/\mathrm{Epi}(\mathsf{FinAb})$. We now find the universal entropies
of $(\mathrm{Epi}(\mathsf{FinAb}),\,-/\mathrm{Epi}(\mathsf{FinAb}))$.

\medskip{}

\begin{thm}
For $A\in\mathsf{FinAb}$, $i,k\in\mathbb{N}$, let $M_{i,k}(A)$
be the multiplicity of $\mathbb{Z}_{p_{i}^{j}}$ ($p_{i}$ is the
$i$-th prime number) in the decomposition of $A$, i.e., the decomposition
of $A$ is $A\cong\bigoplus_{i,j\in\mathbb{N}}\mathbb{Z}_{p_{i}^{j}}^{M_{i,k}(A)}$.
Define a function $M:\mathsf{FinAb}\to\mathbb{Z}_{\ge0}^{\oplus\mathbb{N}^{2}}$,
$M(A)=(M_{i,k}(A))_{i,k\in\mathbb{N}}$. For the MonSiMon category
$(\mathrm{Epi}(\mathsf{FinAb}),\,-/\mathrm{Epi}(\mathsf{FinAb}))$:
\begin{enumerate}
\item The universal baseless $\mathsf{OrdCMon}$-entropy is given by $M:\mathrm{Epi}(\mathsf{FinAb})\to(\mathbb{Z}_{\ge0}^{\oplus\mathbb{N}^{2}},\succeq)$,\footnote{$\mathbb{Z}_{\ge0}^{\oplus\mathbb{N}^{2}}=\bigoplus_{i,j\in\mathbb{N}}\mathbb{Z}_{\ge0}$
denotes the direct sum of monoids, where each element is in the form
$(\alpha_{i,j})_{i,j\in\mathbb{N}}$ with finitely many nonzero entries.} where $a\succeq\beta$ if $\sum_{j=k}^{\infty}\alpha_{i,j}\ge\sum_{j=k}^{\infty}\beta_{i,j}$
for every $i,k\in\mathbb{N}$. Same for $\mathsf{COrdCMon}$ and $\mathsf{IcOrdCMon}$.
\item The universal baseless $\mathsf{OrdAb}$-entropy is given by $M:\mathrm{Epi}(\mathsf{FinAb})\to(\mathbb{Z}^{\oplus\mathbb{N}^{2}},\succeq)$.
Same for $\mathsf{IcOrdAb}$.
\item The universal baseless $\mathsf{OrdVect}_{\mathbb{Q}}$-entropy is
given by $M:\mathrm{Epi}(\mathsf{FinAb})\to(\mathbb{Q}^{\oplus\mathbb{N}^{2}},\succeq)$.
Same for $\mathsf{IcOrdVect}_{\mathbb{Q}}$.
\end{enumerate}
\end{thm}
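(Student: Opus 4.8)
The plan is to follow the same template as Propositions \ref{prop:finset} and \ref{prop:vect}, but since the isomorphism classification of objects is now richer, the crux is a clean criterion for when an epimorphism of finite abelian groups exists. I will first package everything into a single combinatorial invariant. Writing $s_{i,k}(A):=\sum_{j\ge k}M_{i,j}(A)$, the structure theorem gives $s_{i,k}(A)=\dim_{\mathbb{F}_{p_i}}\!\big(p_i^{\,k-1}A/p_i^{\,k}A\big)=\dim_{\mathbb{F}_{p_i}}\!\big(p_i^{\,k-1}A\cap A[p_i]\big)$, where $A[p]=\{x:px=0\}$. Note that $\alpha\succeq\beta$ means by definition $s_{i,k}(\alpha)\ge s_{i,k}(\beta)$ for all $i,k$, that $M$ is additive under $\oplus$, and that $M$ induces a bijection between isomorphism classes of $\mathsf{FinAb}$ and $\mathbb{Z}_{\ge0}^{\oplus\mathbb{N}^2}$.

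The key lemma is: there is an epimorphism $A\to B$ in $\mathsf{FinAb}$ if and only if $M(A)\succeq M(B)$. For necessity, a surjection $A\twoheadrightarrow B$ carries $p_i^{k-1}A$ onto $p_i^{k-1}B$ and $p_i^kA$ onto $p_i^kB$, hence induces a surjection $p_i^{k-1}A/p_i^kA\to p_i^{k-1}B/p_i^kB$, giving $s_{i,k}(A)\ge s_{i,k}(B)$. For sufficiency, I reduce to a single prime $p_i$ (the condition decouples across primes, and a direct sum of per-prime surjections is a surjection) and encode the $p_i$-part of $A$ by the partition $\lambda(A)$ having $M_{i,k}(A)$ parts equal to $k$; then $s_{i,k}(A)=\lambda(A)'_k$ is the conjugate partition, so $M(A)\succeq M(B)$ is exactly Young-diagram containment $\lambda(B)\subseteq\lambda(A)$, equivalently $\mu_\ell\ge\nu_\ell$ for all $\ell$ when the parts are listed in decreasing order $\mu$ (for $A$) and $\nu$ (for $B$). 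The desired epimorphism is the direct sum over $\ell$ of the reductions $\mathbb{Z}_{p_i^{\mu_\ell}}\twoheadrightarrow\mathbb{Z}_{p_i^{\nu_\ell}}$.

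Granting this lemma, the three defining properties of a baseless $\mathsf{OrdCMon}$-entropy follow: monotonicity is the necessity direction; additivity is $M(A\oplus B)=M(A)+M(B)$; and subadditivity holds because $\mathrm{im}((f,g))$ is a subgroup $D\le B\oplus C$, whence $p_i^{k-1}D\cap D[p_i]\subseteq p_i^{k-1}(B\oplus C)\cap(B\oplus C)[p_i]$ yields $s_{i,k}(D)\le s_{i,k}(B)+s_{i,k}(C)$, i.e. $M(B)+M(C)\succeq M(D)$. For the universal property, any baseless $\mathsf{OrdCMon}$-entropy $H':\mathrm{Epi}(\mathsf{FinAb})\to\mathsf{W}'$ sends isomorphic groups to equal objects (as $\mathsf{W}'$ is skeletal), so it factors as $H'=F\circ M$ with $F$ well-defined on all of $\mathbb{Z}_{\ge0}^{\oplus\mathbb{N}^2}$ by surjectivity of $M$; $F$ is a monoid homomorphism by additivity of $H'$, and it is order-preserving precisely by the sufficiency direction (given $\alpha\succeq\beta$, realize $\alpha=M(A)$, $\beta=M(B)$, obtain an epimorphism $A\to B$, and apply functoriality of $H'$). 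Uniqueness of $F$ is immediate from surjectivity of $M$.

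Finally, the remaining identifications come from the reflection computations of Section \ref{subsec:cat_of_codomains}. Since the $s_{i,k}$ are additive, one checks directly that $(\mathbb{Z}_{\ge0}^{\oplus\mathbb{N}^2},\succeq)$ is cancellative and integrally closed (if $ns_{i,k}(x)+s_{i,k}(a)\ge ns_{i,k}(y)+s_{i,k}(b)$ for all $n$, divide by $n$ and let $n\to\infty$), hence it already lies in $\mathsf{IcOrdCMon}\subseteq\mathsf{COrdCMon}$ and the reflection fixes it, proving (1). For (2), the Grothendieck group of differences of $(\mathbb{Z}_{\ge0}^{\oplus\mathbb{N}^2},\succeq)$ is $(\mathbb{Z}^{\oplus\mathbb{N}^2},\succeq)$ with $s_{i,k}$ extended linearly, and it is again integrally closed, so it serves for both $\mathsf{OrdAb}$ and $\mathsf{IcOrdAb}$. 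For (3), the map $x\mapsto(s_{i,k}(x))$ is injective and many-copy convertibility coincides with $\succeq$, so the $\mathsf{OrdVect}_{\mathbb{Q}}$-reflection is $(\mathbb{Q}^{\oplus\mathbb{N}^2},\succeq)$, its positive cone $\{x:s_{i,k}(x)\ge0\ \forall i,k\}$ obtained by clearing denominators, which is Archimedean and hence also the $\mathsf{IcOrdVect}_{\mathbb{Q}}$-reflection. I expect the main obstacle to be the sufficiency half of the key lemma — actually producing the epimorphism — which the conjugate-partition / Young-diagram-containment reformulation handles cleanly.
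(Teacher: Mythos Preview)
Your proof is correct and follows the same overall architecture as the paper's: establish the key lemma (epimorphism $A\twoheadrightarrow B$ exists iff $M(A)\succeq M(B)$), verify that $M$ satisfies monotonicity, additivity, and subadditivity, then check universality. The tactical differences are worth noting. For the key lemma, the paper states the three-way equivalence (subgroup / quotient / $\succeq$), proves subgroup $\Rightarrow\ \succeq$ by a direct count, cites Fuchs for quotient $\Rightarrow$ subgroup, and reads off $\succeq\ \Rightarrow$ quotient from the cyclic case; your conjugate-partition reformulation makes the sufficiency direction entirely self-contained and arguably cleaner. For the universal property, the paper does \emph{not} invoke the sufficiency direction: instead it shows that $\theta_{i,j}:=H'(\mathbb{Z}_{p_i^j})$ is nondecreasing in $j$, then argues that any $\alpha\succeq\beta$ can be reached by elementary steps (decrease $\alpha_{i,j}$, increase $\alpha_{i,j-1}$), each of which cannot increase $\sum\alpha_{i,j}\theta_{i,j}$. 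Your route---realize $\alpha,\beta$ as $M(A),M(B)$, produce the epimorphism, and apply functoriality of $H'$---is shorter and reuses the key lemma symmetrically, at the cost of needing the explicit construction of that epimorphism. Finally, you spell out why the codomain is already cancellative and integrally closed and what its Grothendieck and $\mathbb{Q}$-vector-space completions are; the paper leaves these as implicit consequences of the reflection machinery.
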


\begin{proof}
We first state a well-known fact about finite abelian groups (often
stated in other forms, e.g. see \cite{fuchs1960abelian}). Consider
finite abelian groups $A,B$. The following are equivalent:
\begin{enumerate}
\item There is a subgroup of $A$ isomorphic to $B$.
\item There is a quotient group of $A$ isomorphic to $B$.
\item $M(A)\succeq M(B)$.
\end{enumerate}
For the sake of completeness, we include the proof of $1\Rightarrow3$
(the arguments are definitely not novel, though we could not find
a reference). We appeal to \cite{fuchs1960abelian} for $2\Rightarrow1$,
and $3\Rightarrow2$ follows directly from the fact that $\mathbb{Z}_{p_{i}^{j}}$
has a quotient isomorphic to $\mathbb{Z}_{p_{i}^{k}}$ if $j\ge k$.
Consider $A=\bigoplus_{i,j\in\mathbb{N}}\mathbb{Z}_{p_{i}^{j}}^{\alpha_{i,j}}$.
Fix $i,k\in\mathbb{Z}$. Let $S\subseteq A$ be the set of elements
with order that divides $p_{i}^{k}$. We have $|p_{i}^{k-1}S|=\prod_{j=k}^{\infty}p_{i}^{\alpha_{i,j}}$
since $p_{i}^{k-1}\mathbb{Z}_{p_{i}^{j}}=\{0\}$ for $j<k$; and for
$j\ge k$, the set of elements in $\mathbb{Z}_{p_{i}^{j}}$ with order
that divides $p_{i}^{k}$ is $\{0,p_{i}^{j-k},2p_{i}^{j-k},\ldots,(p^{k}-1)p_{i}^{j-k}\}$,
and its image after multiplying $p_{i}^{k-1}$ is $\{0,p_{i}^{j-1},2p_{i}^{j-1},\ldots,(p-1)p_{i}^{j-1}\}$.
Assume $B\subseteq A$ is a subgroup with $B\cong\mathbb{Z}_{p_{i}^{k}}^{t}$
for some $t\in\mathbb{N}$. We have $|p_{i}^{k-1}B|=p_{i}^{t}$. Since
$B\subseteq S$, $p_{i}^{k-1}B\subseteq p_{i}^{k-1}S$, we have $p_{i}^{t}\le\prod_{j=k}^{\infty}p_{i}^{\alpha_{i,j}}$,
$t\le\sum_{j=k}^{\infty}\alpha_{i,j}$. The result follows from that
$\bigoplus_{i,j\in\mathbb{N}}\mathbb{Z}_{p_{i}^{j}}^{\beta_{i,j}}$
has a subgroup isomorphic to $\mathbb{Z}_{p_{i}^{k}}^{\sum_{j=k}^{\infty}\beta_{i,j}}$.

We then show that $M:\mathrm{Epi}(\mathsf{FinAb})\to(\mathbb{Z}_{\ge0}^{\oplus\mathbb{N}^{2}},\succeq)$
is a baseless $\mathsf{OrdCMon}$-entropy. For a morphism $f:A\to B$
in $\mathrm{Epi}(\mathsf{FinAb})$, by the fundamental theorem on
homomorphisms, $B$ is isomorphic to the quotient group $A/\mathrm{ker}(f)$,
and hence $M(A)\succeq M(B)$, meaning that $M$ is a functor. It
is clearly strongly monoidal. It is left to check the subadditivity
property that $M\mathrm{Cod}:A/\mathrm{Epi}(\mathsf{FinAb})\to(\mathbb{Z}_{\ge0}^{\oplus\mathbb{N}^{2}},\succeq)$
is a lax monoidal functor for all $A\in\mathrm{Epi}(\mathsf{FinAb})$.
Consider epimorphisms $f:A\to B$, $g:A\to C$ in $\mathrm{Epi}(\mathsf{FinAb})$,
and let $f\otimes g:A\to D$. By construction, $D$ is the image of
$(f,g):A\to(B\oplus C)$, and is a subgroup of $B\oplus C$. Hence
$M(D)\preceq M(B\oplus C)=M(B)+M(C)$ for $i,k\in\mathbb{N}$.

Finally, we prove the universal property of $M$. Let $H':\mathrm{Epi}(\mathsf{FinAb})\to\mathsf{W}'$
be any baseless $\mathsf{OrdCMon}$-entropy. Since $\mathsf{W}'$
is skeletal, $H'$ maps isomorphic groups to the same object, so $H'(A)$
is determined by $M(A)$. Let $H'(A)=t(M(A))$ where $t:M(\mathsf{FinAb})\to\mathsf{W}'$
is a function. It is left to check that $t$ is a morphism in $\mathsf{OrdCMon}$.
Let $\theta_{i,j}:=t(M(\mathbb{Z}_{p_{i}^{j}}))$. Since $H'$ is
a strong monoidal functor, for $A\cong\bigoplus_{i,j\in\mathbb{N}}\mathbb{Z}_{p_{i}^{j}}^{\alpha_{i,j}}$,
we have 
\[
t(\alpha)=H'(A)=\sum_{i,j}\alpha_{i,j}H'(\mathbb{Z}_{p_{i}^{j}})=\sum_{i,j}\alpha_{i,j}\theta_{i,j}.
\]
Since there is a morphism from $\mathbb{Z}_{p_{i}^{j+1}}$ to $\mathbb{Z}_{p_{i}^{j}}$
and to $\{0\}$ in $\mathrm{Epi}(\mathsf{FinAb})$, we have $\theta_{i,j+1}\ge\theta_{i,j}\ge0$.
Note that for every $\alpha\succeq\beta$, i.e., $\sum_{j=k}^{\infty}\alpha_{i,j}\ge\sum_{j=k}^{\infty}\beta_{i,j}$
for $i,k\in\mathbb{N}$, it is possible to transform $\alpha$ into
$\beta$ using a finite number of steps in this form: if $\alpha_{i,j}\ge1$,
decrease $\alpha_{i,j}$ by $1$, and increase $\alpha_{i,j-1}$ by
$1$ if $j\ge2$. Since $\theta_{i,j+1}\ge\theta_{i,j}\ge0$, such
a step cannot increase $t(\alpha)$, and hence $t(\alpha)\ge t(\beta)$
if $\alpha\succeq\beta$. Therefore, $t$ is an order-preserving homomorphism.
It is clear that such $t$ is unique.
\end{proof}
\medskip{}

The characterization of the universal entropy of $\mathrm{Epi}(\mathsf{FinGrp})$,
where $\mathsf{FinGrp}$ is the category of finite groups, is left
for future studies.

\medskip{}

\section{Global Naturality\label{sec:global}}

Recall that Definition \ref{def:gcm_entropy} gives the universal
$\mathsf{V}$-entropy of various MonSiMon categories $\mathsf{C}\in\mathsf{MonSiMonCat}$
as components $\phi_{\mathsf{C}}:\mathsf{C}\to T(\mathsf{C})$ of
the unit of the universal $\mathsf{V}$-entropy monad $T$. This allows
us to link the universal entropies of all MonSiMon categories in a
natural and functorial manner, i.e., the mapping $T$ sending a MonSiMon
category $\mathsf{C}$ to the codomain of its universal entropy $T(\mathsf{C})$
is a functor, and the mapping sending $\mathsf{C}$ to the arrow $\phi_{\mathsf{C}}:\mathsf{C}\to T(\mathsf{C})$
(in the arrow category of $\mathsf{MonSiMonCat}$) is a functor as
well.

In this paper, we have studied the following MonSiMon categories,
which can be linked by the MonSiMon functors drawn below (where we
write $-/\mathsf{C}$ for the MonSiMon category $(\mathsf{C},-/\mathsf{C})$
for brevity, $\mathbb{F}$ is a finite field, and $0<\rho<1$):\\
\[\begin{tikzcd}[sep=scriptsize]
	{*} & {-/\mathrm{Epi}(\Delta_{+})} & {-/\mathrm{Epi}(\mathsf{FinSet}^{\mathrm{op}})} & {-/\mathrm{Epi}(\mathsf{FinVect}_{\mathbb{F}})} \\
	& {-/\mathrm{Epi}(\mathsf{FinISet})} \\
	{*} && {-/\mathsf{FinProb}} & {-/\mathrm{Epi}(\mathsf{FinAb})} \\
	& {-/\mathsf{LProb}_{\rho}}
	\arrow[from=3-3, to=2-2]
	\arrow[from=1-3, to=1-4]
	\arrow[from=1-4, to=3-4]
	\arrow[from=3-4, to=3-3]
	\arrow[from=1-1, to=1-2]
	\arrow[from=1-2, to=1-3]
	\arrow[from=3-3, to=4-2]
	\arrow[from=2-2, to=3-1]
	\arrow[from=4-2, to=3-1]
\end{tikzcd}\]Each of these functors will be explained separately later. Note that
$*$ denotes the zero object of $\mathsf{MonSiMonCat}$ (given by
the pair $(*,\,*\stackrel{*}{\to}\mathsf{MonCat}_{\ell})$, where
$*\stackrel{*}{\to}\mathsf{MonCat}_{\ell}$ is the functor sending
the unique object to $*\in\mathsf{MonCat}_{\ell}$), and has a universal
$\mathsf{IcOrdAb}$-entropy $(!,h)$, $h_{\bullet}(\bullet)=0$. Considering
the reflective subcategory $\mathsf{IcOrdAb}\hookrightarrow\mathsf{MonSiMonCat}$,
and passing these MonSiMon categories to the monad unit $\phi$, and
passing these MonSiMon functors to the monad $T:\mathsf{MonSiMonCat}\to\mathsf{IcOrdAb}$,
we have the following commutative diagram in $\mathsf{MonSiMonCat}$
linking those MonSiMon categories and their universal $\mathsf{IcOrdAb}$-entropies:\\
\[\begin{tikzcd}
	{\mathsf{C}} && {T(\mathsf{C})} \\
	{*} && {\{0\}} \\
	{-/\mathrm{Epi}(\Delta_{+})} && {\mathbb{Z}} \\
	{-/\mathrm{Epi}(\mathsf{FinSet}^{\mathrm{op}})} && {\mathbb{Z}} \\
	{-/\mathrm{Epi}(\mathsf{FinVect}_{\mathbb{F}})} && {\mathbb{Z}} \\
	{-/\mathrm{Epi}(\mathsf{FinAb})} && {(\mathbb{Z}^{\oplus\mathbb{N}^{2}},\succeq)} \\
	{-/\mathsf{FinProb}} && {(\log\mathbb{Q}_{>0})\times\mathbb{R}} \\
	& {-/\mathsf{LProb}_{\rho}} && {\mathbb{R}} \\
	{-/\mathrm{Epi}(\mathsf{FinISet})} && {\log \mathbb{Q}_{>0}} \\
	& {*} && {\{0\}}
	\arrow[from=7-1, to=9-1]
	\arrow["{\pi_1}"{description, pos=0.7}, from=7-3, to=9-3]
	\arrow["{|\cdot|\circ\mathrm{Cod}}", from=4-1, to=4-3]
	\arrow["{M\circ\mathrm{Cod}}", from=6-1, to=6-3]
	\arrow["{(H_{0},H_{1})\circ\mathrm{Cod}}", from=7-1, to=7-3]
	\arrow["{(\log|\cdot|)\circ\mathrm{Cod}}"{pos=0.2}, from=9-1, to=9-3]
	\arrow[from=9-3, to=10-4]
	\arrow[from=9-1, to=10-2]
	\arrow["{0\circ\mathrm{Cod}}", from=10-2, to=10-4]
	\arrow["{0\circ\mathrm{Cod}}", from=2-1, to=2-3]
	\arrow["{\dim\circ\mathrm{Cod}}", from=5-1, to=5-3]
	\arrow[from=4-1, to=5-1]
	\arrow[Rightarrow, no head, from=4-3, to=5-3]
	\arrow[from=5-1, to=6-1]
	\arrow[from=5-3, to=6-3]
	\arrow[from=6-1, to=7-1]
	\arrow[from=6-3, to=7-3]
	\arrow[from=2-1, to=3-1]
	\arrow[from=3-1, to=4-1]
	\arrow[from=2-3, to=3-3]
	\arrow[Rightarrow, no head, from=3-3, to=4-3]
	\arrow["{([n]\mapsto n+1)\circ\mathrm{Cod}}", from=3-1, to=3-3]
	\arrow[from=7-1, to=8-2]
	\arrow["{\pi_2}", from=7-3, to=8-4]
	\arrow[from=8-4, to=10-4]
	\arrow["{H_1\circ\mathrm{Cod}}"{pos=0.2}, from=8-2, to=8-4]
	\arrow[from=8-2, to=10-2]
	\arrow["{\phi_{\mathsf{C}}}", from=1-1, to=1-3]
\end{tikzcd}\]

The categories on the left are domains of the entropies, that are
MonSiMon categories in the form $(\mathsf{C},-/\mathsf{C})$. The
categories on the right are codomains of the entropies, that are integrally
closed partially ordered abelian groups, which are also MonSiMon categories
via the embedding in Section \ref{sec:subcats}.  Each horizontal
functor above are the universal $\mathsf{IcOrdAb}$-entropies of those
MonSiMon categories. For notational simplicity, a universal $\mathsf{IcOrdAb}$-entropy
is written as $h=H\circ\mathrm{Cod}$, where $H$ is the universal
baseless $\mathsf{IcOrdAb}$-entropy.\footnote{This notation is justified by the one-to-one correspondence $Uh=UH\circ\mathrm{Cod}$
in Proposition \ref{prop:baseless}, where we omit the forgetful functor
$U$ for brevity.} We now explain each of the vertical functors above:
\begin{itemize}
\item $-/\mathrm{Epi}(\Delta_{+})\to-/\mathrm{Epi}(\mathsf{FinSet}^{\mathrm{op}})$
is the MonSiMon functor $(F,\gamma)$, with $F$ sending $[n]$ to
$\{0,\ldots,n\}$, and sending order-preserving surjective function
$f:[m]\to[n]$ to the injective function $\{0,\ldots,n\}\to\{0,\ldots,m\}$,
$x\mapsto\min f^{-1}(\{x\})$ (or $\max f^{-1}(\{x\})$), and $\gamma:-/\mathrm{Epi}(\Delta_{+})\Rightarrow F^{\mathrm{op}}(-)/\mathrm{Epi}(\mathsf{FinSet}^{\mathrm{op}})$,
$\gamma_{P}(X)=F(X)$ for $X:P\to Q$. After passing this MonSiMon
functor to the monad $T$, the ``$\mathbb{Z}=\mathbb{Z}$'' functor
in the diagram on the right is simply the identity MonSiMon functor.
\item $-/\mathrm{Epi}(\mathsf{FinSet}^{\mathrm{op}})\to-/\mathrm{Epi}(\mathsf{FinVect}_{\mathbb{F}})$
is the MonSiMon $(F,\gamma)$, with $F$ sending $A\in\mathsf{FinSet}$
to $\mathbb{F}^{A}\in\mathsf{FinVect}_{\mathbb{F}}$, and sending
injective function $f:A\to B$ to $\mathbb{F}^{B}\to\mathbb{F}^{A}$,
$(x_{i})_{i\in B}\mapsto(x_{f(j)})_{j\in A}$, and $\gamma$ is the
same as above. After passing this MonSiMon functor to the monad $T$,
the ``$\mathbb{Z}=\mathbb{Z}$'' functor in the diagram on the right
is simply the identity MonSiMon functor.
\item $-/\mathrm{Epi}(\mathsf{FinVect}_{\mathbb{F}})\to-/\mathrm{Epi}(\mathsf{FinAb})$
is the inclusion MonSiMon functor. After passing this MonSiMon functor
to the monad $T$, the ``$\mathbb{Z}\to(\mathbb{Z}^{\oplus\mathbb{N}^{2}},\succeq)$''
functor in the diagram on the right is the MonSiMon functor mapping
$x\in\mathbb{R}$ to $(\alpha_{i,k})_{i,k\in\mathbb{N}}$ where $\alpha_{i,k}=x$
if $p_{i}=p$ and $k=m$ (assuming $|\mathbb{F}|=p^{m}$), and $\alpha_{i,k}=0$
otherwise.
\item $-/\mathrm{Epi}(\mathsf{FinAb})\to-/\mathsf{FinProb}$ is the MonSiMon
functor $(F,\gamma)$, with $F$ sending $A\in\mathsf{FinAb}$ to
the uniform distribution over $A$, and sending $f:A\to B$ in $\mathrm{Epi}(\mathsf{FinAb})$
to the measure-preserving function from the uniform distribution over
$A$ to that over $B$ given by $f$, and $\gamma$ is the same as
above. After passing this MonSiMon functor to the monad $T$, the
``$(\mathbb{Z}^{\oplus\mathbb{N}^{2}},\succeq)\to(\log\mathbb{Q}_{>0})\times\mathbb{R}$''
functor in the diagram on the right is the MonSiMon functor mapping
$(\alpha_{i,k})_{i,k\in\mathbb{N}}$ to $(\sum_{i,k}\alpha_{i,k}\log p_{i}^{k},\,\sum_{i,k}\alpha_{i,k}\log p_{i}^{k})$.
\item $-/\mathsf{FinProb}\to-/\mathrm{Epi}(\mathsf{FinISet})$ is the MonSiMon
functor $(F,\gamma)$, with $F$ sending $P\in\mathsf{FinProb}$ to
the support of $P$ (the functor $\mathrm{Supp}$ in Section \ref{sec:set}).
After passing this MonSiMon functor to the monad $T$, the ``$\pi_{1}:(\log\mathbb{Q}_{>0})\times\mathbb{R}\to\log\mathbb{Q}_{>0}$''
functor in the diagram on the right is the MonSiMon functor mapping
$(x_{0},x_{1})$ to $x_{0}$.
\item $-/\mathsf{FinProb}\to-/\mathsf{LProb}_{\rho}$ is the inclusion MonSiMon
functor. After passing this MonSiMon functor to the monad $T$, the
``$\pi_{2}:(\log\mathbb{Q}_{>0})\times\mathbb{R}\to\mathbb{R}$''
functor in the diagram on the right is the MonSiMon functor mapping
$(x_{0},x_{1})$ to $x_{1}$.
\end{itemize}
\medskip{}

Moreover, we can link these MonSiMon categories and their universal
$\mathsf{OrdCMon}$, $\mathsf{IcOrdCMon}$, $\mathsf{IcOrdAb}$ and
$\mathsf{IcOrdVect}_{\mathbb{Q}}$-entropies, placing almost all the
universal entropies discussed in this paper so far in the following
commutative diagram. Also refer to Figure \ref{fig:reflect2} for
the same figure drawn in a way that highlights the objects and morphisms
in different reflective subcategories.\\
{\footnotesize{}
\[\begin{tikzcd}[column sep=scriptsize]
	{\mathsf{MonSiMonCat}} & {\;\mathsf{OrdCMon}\;} & {\mathsf{IcOrdCMon}} & {\mathsf{IcOrdAb}} & {\mathsf{IcOrdVect}_\mathbb{Q}} \\
	\ni & \ni & \ni & \ni & \ni \\
	{-/\mathrm{Epi}(\Delta_{+})} & {\mathbb{Z}_{\ge 0}} & {\mathbb{Z}_{\ge 0}} & {\mathbb{Z}} & {\mathbb{Q}} \\
	{-/\mathrm{Epi}(\mathsf{FinSet}^{\mathrm{op}})} & {\mathbb{Z}_{\ge 0}} & {\mathbb{Z}_{\ge 0}} & {\mathbb{Z}} & {\mathbb{Q}} \\
	{-/\mathrm{Epi}(\mathsf{FinVect}_{\mathbb{F}})} & {\mathbb{Z}_{\ge 0}} & {\mathbb{Z}_{\ge 0}} & {\mathbb{Z}} & {\mathbb{Q}} \\
	{-/\mathrm{Epi}(\mathsf{FinAb})} & {(\mathbb{Z}_{\ge0}^{\oplus\mathbb{N}^{2}},\succeq)} & {(\mathbb{Z}_{\ge0}^{\oplus\mathbb{N}^{2}},\succeq)} & {(\mathbb{Z}^{\oplus\mathbb{N}^{2}},\succeq)} & {(\mathbb{Q}^{\oplus\mathbb{N}^{2}},\succeq)} \\
	\\
	{-/\mathsf{FinProb}} & {(\mathsf{FinProb},\succsim_{01})} & {(H_{0},H_{1})(\mathsf{FinProb})} & {(\log\mathbb{Q}_{>0})\times\mathbb{R}} & {(\mathbb{Q} \log \mathbb{Q}_{>0}) \times \mathbb{R}} \\
	{-/\mathrm{Epi}(\mathsf{FinISet})} & {\log \mathbb{N}} & {\log \mathbb{N}} & {\log \mathbb{Q}_{>0}} & {\mathbb{Q} \log \mathbb{Q}_{>0}} \\
	\\
	{-/\mathsf{LProb}_{\rho}} & {?} & {\mathbb{R}_{\ge 0}} & {\mathbb{R}} & {\mathbb{R}}
	\arrow[curve={height=6pt}, from=8-1, to=9-1]
	\arrow[from=4-1, to=5-1]
	\arrow[Rightarrow, no head, from=4-5, to=5-5]
	\arrow[from=5-1, to=6-1]
	\arrow[from=5-5, to=6-5]
	\arrow[from=6-1, to=8-1]
	\arrow[from=6-5, to=8-5]
	\arrow[from=3-1, to=4-1]
	\arrow[Rightarrow, no head, from=3-5, to=4-5]
	\arrow[curve={height=-24pt}, from=8-1, to=11-1]
	\arrow[Rightarrow, no head, from=3-2, to=3-3]
	\arrow[Rightarrow, no head, from=4-2, to=4-3]
	\arrow[Rightarrow, no head, from=5-2, to=5-3]
	\arrow[Rightarrow, no head, from=6-2, to=6-3]
	\arrow["{(H_{0},H_{1})}", from=8-2, to=8-3]
	\arrow[from=11-2, to=11-3]
	\arrow[Rightarrow, no head, from=9-2, to=9-3]
	\arrow["{([n]\mapsto n+1) \circ \mathrm{Cod}}", from=3-1, to=3-2]
	\arrow["{|\cdot| \circ \mathrm{Cod}}", from=4-1, to=4-2]
	\arrow["{\dim \circ \mathrm{Cod}}", from=5-1, to=5-2]
	\arrow["{M \circ \mathrm{Cod}}", from=6-1, to=6-2]
	\arrow[from=8-1, to=8-2]
	\arrow[from=11-1, to=11-2]
	\arrow["{(\log|\cdot|) \circ \mathrm{Cod}}", from=9-1, to=9-2]
	\arrow[hook, from=3-3, to=3-4]
	\arrow[hook, from=3-4, to=3-5]
	\arrow[hook, from=4-3, to=4-4]
	\arrow[hook, from=4-4, to=4-5]
	\arrow[hook, from=5-3, to=5-4]
	\arrow[hook, from=5-4, to=5-5]
	\arrow[hook, from=6-3, to=6-4]
	\arrow[hook, from=6-4, to=6-5]
	\arrow[hook, from=8-3, to=8-4]
	\arrow[hook, from=8-4, to=8-5]
	\arrow[hook, from=11-3, to=11-4]
	\arrow[Rightarrow, no head, from=11-4, to=11-5]
	\arrow[hook, from=9-3, to=9-4]
	\arrow[hook, from=9-4, to=9-5]
	\arrow[Rightarrow, no head, from=3-4, to=4-4]
	\arrow[Rightarrow, no head, from=4-4, to=5-4]
	\arrow[from=5-4, to=6-4]
	\arrow[from=6-4, to=8-4]
	\arrow[Rightarrow, no head, from=3-2, to=4-2]
	\arrow[Rightarrow, no head, from=4-2, to=5-2]
	\arrow[Rightarrow, no head, from=3-3, to=4-3]
	\arrow[Rightarrow, no head, from=4-3, to=5-3]
	\arrow[from=5-2, to=6-2]
	\arrow[from=5-3, to=6-3]
	\arrow[from=6-2, to=8-2]
	\arrow[from=6-3, to=8-3]
	\arrow[curve={height=6pt}, from=8-2, to=9-2]
	\arrow["{\pi_1}"', curve={height=6pt}, from=8-3, to=9-3]
	\arrow["{\pi_1}"', curve={height=6pt}, from=8-4, to=9-4]
	\arrow["{\pi_1}"', curve={height=6pt}, from=8-5, to=9-5]
	\arrow[curve={height=-24pt}, from=8-2, to=11-2]
	\arrow["{\pi_2}"{description, pos=0.6}, curve={height=-24pt}, from=8-3, to=11-3]
	\arrow["{\pi_2}"{description, pos=0.6}, curve={height=-24pt}, from=8-4, to=11-4]
	\arrow["{\pi_2}"{description, pos=0.6}, curve={height=-24pt}, from=8-5, to=11-5]
	\arrow[hook', from=1-2, to=1-1]
	\arrow[hook', from=1-3, to=1-2]
	\arrow[hook', from=1-4, to=1-3]
	\arrow[hook', from=1-5, to=1-4]
	\arrow["{H_1 \circ \mathrm{Cod}}"{description, pos=0.3}, curve={height=-30pt}, from=11-1, to=11-3]
	\arrow["{(H_0,H_1) \circ \mathrm{Cod}}"{description, pos=0.2}, curve={height=-30pt}, from=8-1, to=8-3]
\end{tikzcd}\]}{\footnotesize\par}

\begin{figure}
\begin{centering}
\includegraphics[scale=0.83]{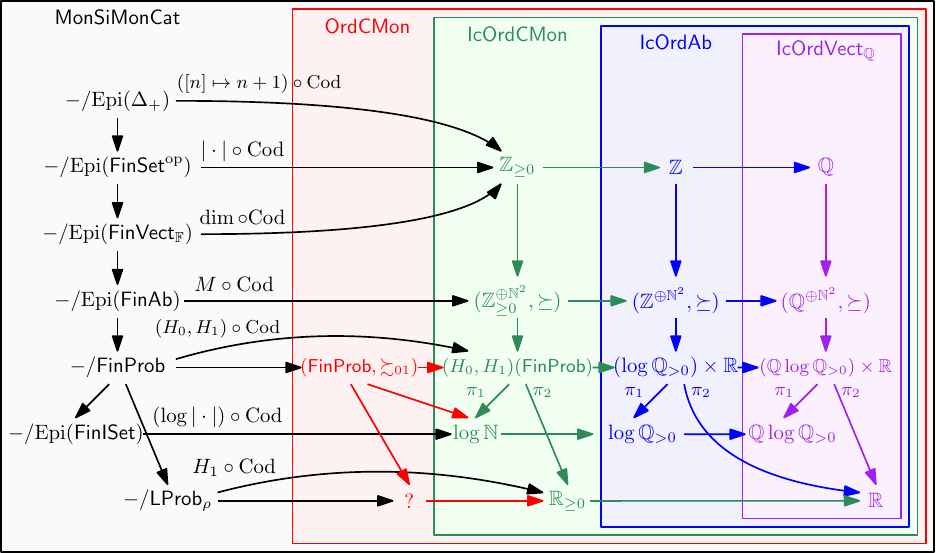}
\par\end{centering}
\caption{\label{fig:reflect2}The universal $\mathsf{V}$-entropies of various
categories for various $\mathsf{V}$'s discussed in this paper, placed
in a single commutative diagram. The \textcolor{purple}{purple} objects
and morphisms in the \textcolor{purple}{purple} rectangle are in the
category \textcolor{purple}{$\mathsf{IcOrdVect}_{\mathbb{Q}}$}. The
\textcolor{blue}{blue} and \textcolor{purple}{purple} objects and
morphisms are in \textcolor{blue}{$\mathsf{IcOrdAb}$}. The \textcolor{teal}{green},
\textcolor{blue}{blue} and \textcolor{purple}{purple} objects and
morphisms are in \textcolor{teal}{$\mathsf{IcOrdCMon}$}. The \textcolor{red}{red},
\textcolor{teal}{green}, \textcolor{blue}{blue} and \textcolor{purple}{purple}
objects and morphisms are in \textcolor{red}{$\mathsf{OrdCMon}$}.
All objects and morphisms are in $\mathsf{MonSiMonCat}$. The universal
$\mathsf{V}$-entropy of $\mathsf{C}\in\mathsf{MonSiMonCat}$ (where
$\mathsf{V}$ is $\mathsf{OrdCMon}$, $\mathsf{IcOrdCMon}$, ...)
is the arrow from $\mathsf{C}$ to its reflection in $\mathsf{V}\subseteq\mathsf{MonSiMonCat}$.}
\end{figure}

\medskip{}

\section{Conditional Entropy\label{sec:conditional}}

\subsection{Discretely-Indexed Monoidal Strictly-Indexed Monoidal Categories}

For $Q\in\mathsf{FinProb}$, the over category $\mathsf{FinProb}/Q$
can be regarded as ``the category of finite conditional probability
spaces conditioned on $Q$'', where a ``conditional probability
space'' is in the form $f:P\to Q$.  It was noted in \cite{baez2011entropy}
that $\mathsf{FinProb}/Q$ is monoidal, with tensor product given
by the conditional product of distributions \cite{dawid1999conditional},
where $f_{1}\otimes f_{2}$ for $f_{1}:P_{1}\to Q$ (where $P_{1}$
is a distribution over $A_{1}$, and $Q$ is a distribution over $B$)
and $f_{2}:P_{2}\to Q$ (where $P_{2}$ is a distribution over $A_{2}$)
is given by $f:P\to Q$, where $P$ is a distribution over $\{(x_{1},x_{2})\in A_{1}\times A_{2}:\,f_{1}(x_{1})=f_{2}(x_{2})\}$
with 
\[
P(x_{1},x_{2})=\frac{P_{1}(x_{1})P_{2}(x_{2})}{Q(f_{1}(x_{1}))},
\]
and $f$ is the measure-preserving function that maps $(x_{1},x_{2})$
to $f_{1}(x_{1})$. It was observed in \cite{baez2011entropy} that
under categories of $\mathsf{FinProb}/Q$ admits categorical products.
For $f:P\to Q$, the under category $f/(\mathsf{FinProb}/Q)$ consists
of objects in the form $P\stackrel{x}{\to}A\stackrel{a}{\to}Q$ where
$ax=f$, i.e., objects are commuting triangles
\[
\begin{array}{ccccc}
 &  & A\\
 & \!\!\!\stackrel{x}{\nearrow}\!\!\! &  & \!\!\!\stackrel{a}{\searrow}\!\!\!\\
P\! &  & \!\!\stackrel{f}{\longrightarrow}\!\! &  & \!Q
\end{array}
\]
which can be regarded as a ``conditional random variable conditioned
on $f$'', i.e., $x$ is a random variable over the probability space
$P$, where the random variable $f$ must be a function of $x$. The
categorical product over $f/(\mathsf{FinProb}/Q)$ corresponds to
taking the joint random variable. The ``conditional Shannon entropy
functor'' $\tilde{H}_{1}:f/(\mathsf{FinProb}/Q)\to[0,\infty)$ where
\[
\tilde{H}_{1}\left(\begin{array}{ccccc}
 &  & A\\
 & \!\!\!\stackrel{x}{\nearrow}\!\!\! &  & \!\!\!\stackrel{a}{\searrow}\!\!\!\\
P\! &  & \!\!\stackrel{f}{\longrightarrow}\!\! &  & \!Q
\end{array}\right)=H_{1}(x|f)=H_{1}(A)-H_{1}(Q)
\]
is the conditional Shannon entropy of the random variable $x$ conditional
on the random variable $f$, was observed to be a lax monoidal functor
in \cite{baez2011entropy}. This is a consequence of the subadditivity
of conditional Shannon entropy:
\begin{equation}
H_{1}(X|Z)+H_{1}(Y|Z)\ge H_{1}(X,Y|Z)\label{eq:subadd}
\end{equation}
for any finite jointly distributed random variables $X,Y,Z$. 

The above observations show that $\mathsf{FinProb}/Q$ shares many
properties as $\mathsf{FinProb}$. In fact, we can check that $(\mathsf{FinProb}/Q,\,-/(\mathsf{FinProb}/Q))$
is a MonSiMon category as well, with a universal baseless $\mathsf{IcOrdAb}$-entropy
that maps $f:P\to Q$ to 
\[
(H_{0}(X|Y=y),\,H_{1}(X|Y=y))_{y\in\mathrm{supp}(Q)}\in((\log\mathbb{Q}_{>0})\times\mathbb{R})^{|\mathrm{supp}(Q)|},
\]
where $X$ is a random variable that follows $P$, $Y=f(X)$, $\mathrm{supp}(Q)$
denotes the support of the distribution $Q$, and $H_{1}(X|Y=y)=H_{1}(P_{X|Y=y})$
is the entropy of the conditional distribution $P_{X|Y=y}$. Here
we simply concatenate the Shannon and Hartley entropies conditional
on each value of $Y$. Different values of $Y$ are treated separately
and do not interact with each other. Although we can obtain the conditional
entropy $H_{1}(X|Y)=\sum_{y}Q(y)H_{1}(X|Y=y)$ as a linear function
of $(H_{0}(X|Y=y),\,H_{1}(X|Y=y))_{y}$, we are interested in a characterization
of the (conditional) Shannon entropy as the only entropy satisfying
a certain universal property, excluding other linear functions of
$(H_{0}(X|Y=y),\,H_{1}(X|Y=y))_{y}$. To this end, we should not find
the entropy of each $\mathsf{FinProb}/Q$ separately, but find a way
to link the entropies of $\mathsf{FinProb}/Q$ for different $Q$
together. This can be done via the chain rule
\begin{equation}
H_{1}(X|Y)+H_{1}(Y|Z)=H_{1}(X|Z)\label{eq:chain}
\end{equation}
for any random variables $X,Y,Z$ such that $Y$ is a function of
$X$, and $Z$ is a function of $Y$. This is referred as the functoriality
property in \cite{baez2011characterization}. 

One might treat $\mathsf{FinProb}$ as an ``indexed MonSiMon category''
since each of its over category is a MonSiMon category. Unfortunately,
although the over category functor $\mathsf{FinProb}/-:\,\mathsf{FinProb}\to\mathsf{Cat}$
is a functor, it cannot be upgraded to a $\mathsf{FinProb}\to\mathsf{MonSiMonCat}$
functor since there is generally no monoidal functor from $\mathsf{FinProb}/X$
to $\mathsf{FinProb}/Y$ that can be obtained from the morphism $f:X\to Y$
(the conditional product does not interact well with the change of
the conditioned distribution). Though it can be considered as a $\mathsf{FinProb}\to\mathsf{SiCat}$
functor, where $\mathsf{SiCat}=\mathbb{U}\mathsf{Cat}/\!/\mathsf{Cat}$
is like $\mathsf{MonSiMonCat}$ but with all monoidal structures removed.
Therefore, we require two functors, one of them is $\mathsf{FinProb}\to\mathsf{SiCat}$,
and the other is $\mathrm{Dis}(\mathsf{C})\to\mathsf{MonSiMonCat}$
(where $\mathrm{Dis}(\mathsf{C})$ is the discrete category with the
same objects as $\mathsf{C}$). We now give the definition of ``discretely-indexed
MonSiMon categories'' by linking these two functors.

\medskip{}

\begin{defn}
[Discretely-indexed monoidal strictly-indexed monoidal categories]
\label{def:gcmcat-1-1}Assume $\mathsf{Cat}$ contains only small
categories with respect to a Grothendieck universe. Let $\mathbb{U}$
be a Grothendieck universe large enough such that $\mathbb{U}\mathsf{Cat}$,
the 2-category of $\mathbb{U}$-small categories, contains $\mathsf{Cat}$
as an object. Let $\mathsf{SiCat}=\mathbb{U}\mathsf{Cat}/\!/\mathsf{Cat}$
(the \emph{category of strictly-indexed categories}; see Definition
\ref{def:gcmcat_succinct}), with the obvious forgetful functor $U_{\mathrm{MM}}:\mathsf{MonSiMonCat}\to\mathsf{SiCat}$
that forgets the two monoid structures. Let $\mathbb{U}_{2}$ be a
Grothendieck universe large enough such that $\mathbb{U}_{2}\mathsf{Cat}$,
the 2-category of $\mathbb{U}_{2}$-small categories, contains $\mathsf{SiCat}$
and $\mathsf{MonSiMonCat}$ as objects. Consider the strict-strict
arrow $2$-category $2\mathrm{Arr}(\mathbb{U}_{2}\mathsf{Cat})$.\footnote{$2\mathrm{Arr}(\mathbb{U}_{2}\mathsf{Cat})$ is the strict-strict
functor $2$-category from $\mathsf{I}$ to $\mathbb{U}_{2}\mathsf{Cat}$
(where $\mathsf{I}=\{0\to1\}$ is the interval category as a strict
2-category), where objects are strict $2$-functors $\mathsf{I}\to\mathbb{U}_{2}\mathsf{Cat}$;
$1$-cells are strict natural transformations; and $2$-cells are
modifications.} Then the \emph{category of discretely-indexed monoidal strictly-indexed
monoidal (DisiMonSiMon) categories}, denoted as $\mathsf{DisiMonSiMonCat}$,
is given as the lax comma category\footnote{Given a pair of functors $\mathsf{A}\stackrel{F}{\to}\mathsf{C}\stackrel{G}{\leftarrow}\mathsf{B}$
where $\mathsf{A},\mathsf{B},\mathsf{C}$ are strict 2-categories
and $F,G$ are strict 2-functors, the lax comma category is a category
where objects are triples $(A,B,t)$, $A\in\mathsf{A}$, $B\in\mathsf{B}$,
and $t:F(A)\to G(B)$ in $\mathsf{C}$; and a morphism from $(A,B,t)$
to $(A',B',t')$ is a triple $(a,b,\gamma)$, $a:A\to A'$ in $\mathsf{A}$,
$b:B\to B'$ in $\mathsf{B}$, and $\gamma:G(b)t\Rightarrow t'F(a)$
in $\mathsf{C}$.} of the following pair of functors
\[
\mathsf{Cat}\stackrel{\mathrm{Dis}(-)\hookrightarrow-}{\longrightarrow}2\mathrm{Arr}(\mathbb{U}_{2}\mathsf{Cat})\stackrel{U_{\mathrm{MM}}}{\longleftarrow}*
\]
where $\mathrm{Dis}:\mathsf{Cat}\to\mathsf{Cat}$ is the comonad sending
a category to its discrete subcategory (with the same objects and
only the identity morphisms), $\mathrm{Dis}(\mathsf{C})\hookrightarrow\mathsf{C}$
is the obvious embedding, $\mathrm{Dis}(-)\hookrightarrow-$ is the
functor that sends $\mathsf{C}\in\mathsf{Cat}$ to the arrow $\mathrm{Dis}(\mathsf{C})\hookrightarrow\mathsf{C}$
in $2\mathrm{Arr}(\mathbb{U}_{2}\mathsf{Cat})$, and $*\stackrel{U_{\mathrm{MM}}}{\to}2\mathrm{Arr}(\mathbb{U}_{2}\mathsf{Cat})$
is the functor that maps the unique object to the arrow $U_{\mathrm{MM}}$
in $2\mathrm{Arr}(\mathbb{U}_{2}\mathsf{Cat})$. 

\end{defn}

\medskip{}

We now unpack the above definition. An object of $\mathsf{DisiMonSiMonCat}$,
called a \emph{discretely-indexed monoidal strictly-indexed monoidal
(DisiMonSiMon) category}, is a tuple $(\mathsf{C},O,\tilde{O})$,
where $\mathsf{C}\in\mathsf{Cat}$, $O:\mathrm{Dis}(\mathsf{C})\to\mathsf{MonSiMonCat}$
(a morphism in $\mathbb{U}\mathsf{Cat}$), and $\tilde{O}:\mathsf{C}\to\mathsf{SiCat}$
(a morphism in $\mathbb{U}_{2}\mathsf{Cat}$) such that the following
commutes:\\
\[\begin{tikzcd}
	{\mathrm{Dis}(\mathsf{C})} & {\mathsf{C}} \\
	{\mathsf{MonSiMonCat}} & {\mathsf{SiCat}}
	\arrow[hook, from=1-1, to=1-2]
	\arrow["{U_{\mathrm{MM}}}", from=2-1, to=2-2]
	\arrow["O"', from=1-1, to=2-1]
	\arrow["{\tilde{O}}", from=1-2, to=2-2]
\end{tikzcd}\]Given DisiMonSiMon categories $(\mathsf{C}_{1},O_{1},\tilde{O}_{1})$,
$(\mathsf{C}_{2},O_{2},\tilde{O}_{2})$, a morphism from the former
to the later in $\mathsf{DisiMonSiMonCat}$ (called a \emph{DisiMonSiMon
functor)} is given by a tuple $(F,\omega,\tilde{\omega})$, where
$F:\mathsf{C}_{1}\to\mathsf{C}_{2}$, and $\omega:O_{1}\Rightarrow O_{2}\mathrm{Dis}(F)$,
$\tilde{\omega}:\tilde{O}_{1}\Rightarrow\tilde{O}_{2}F$ are natural
transformation such that the following two compositions are the same:\\
\[\begin{tikzcd}[column sep=small]
	{\mathrm{Dis}(\mathsf{C}_1)} & {\mathrm{Dis}(\mathsf{C}_2)} & {\mathsf{C}_2} && {\mathrm{Dis}(\mathsf{C}_1)} & {\mathsf{C}_1} & {\mathsf{C}_2} \\
	&&& {=} \\
	& {\mathsf{MonSiMonCat}} & {\mathsf{SiCat}} && {\mathsf{MonSiMonCat}} & {\mathsf{SiCat}}
	\arrow["{U_{\mathrm{MM}}}", from=3-2, to=3-3]
	\arrow[""{name=0, anchor=center, inner sep=0}, "{O_1}"', from=1-1, to=3-2]
	\arrow["{\tilde{O}_2}", from=1-3, to=3-3]
	\arrow["{\mathrm{Dis}(F)}", from=1-1, to=1-2]
	\arrow[hook, from=1-2, to=1-3]
	\arrow["{O_2}", from=1-2, to=3-2]
	\arrow[hook, from=1-5, to=1-6]
	\arrow["{U_{\mathrm{MM}}}", from=3-5, to=3-6]
	\arrow["{O_1}"', from=1-5, to=3-5]
	\arrow[""{name=1, anchor=center, inner sep=0}, "{\tilde{O}_1}"', from=1-6, to=3-6]
	\arrow["F", from=1-6, to=1-7]
	\arrow["{\tilde{O}_2}", from=1-7, to=3-6]
	\arrow["{\mathrm{id}}"{description}, shorten <=9pt, shorten >=9pt, Rightarrow, from=3-2, to=1-3]
	\arrow["{\mathrm{id}}"{description}, shorten <=9pt, shorten >=9pt, Rightarrow, from=3-5, to=1-6]
	\arrow["\omega", shorten <=5pt, Rightarrow, from=0, to=1-2]
	\arrow["{\tilde{\omega}}", shorten <=6pt, Rightarrow, from=1, to=1-7]
\end{tikzcd}\]Composition of DisiMonSiMon functors $(\mathsf{C}_{1},O_{1},\tilde{O}_{1})\stackrel{(F_{1},\omega_{1},\tilde{\omega}_{1})}{\longrightarrow}(\mathsf{C}_{2},O_{2},\tilde{O}_{2})\stackrel{(F_{2},\omega_{2},\tilde{\omega}_{2})}{\longrightarrow}(\mathsf{C}_{3},O_{3},\tilde{O}_{3})$
is given by $(F_{2}F_{1},\,\omega_{2}\mathrm{Dis}(F_{1})\circ\omega_{1},\,\tilde{\omega}_{2}F_{1}\circ\tilde{\omega}_{1})$.
\[\begin{tikzcd}[column sep=small]
	{\mathrm{Dis}(\mathsf{C}_1)} & {\mathrm{Dis}(\mathsf{C}_2)} & {\mathrm{Dis}(\mathsf{C}_3)} && {\mathsf{C}_1} & {\mathsf{C}_2} & {\mathsf{C}_3} \\
	&&&& \qquad\qquad && \qquad\qquad \\
	& {\mathsf{MonSiMonCat}} &&&& {\mathsf{SiCat}}
	\arrow["{F_1}", from=1-5, to=1-6]
	\arrow[""{name=0, anchor=center, inner sep=0}, "{\tilde{O}_{1}}"', from=1-5, to=3-6]
	\arrow[""{name=1, anchor=center, inner sep=0}, "{\tilde{O}_{2}}"{description}, from=1-6, to=3-6]
	\arrow["{F_2}", from=1-6, to=1-7]
	\arrow["{\tilde{O}_{3}}", from=1-7, to=3-6]
	\arrow["{\mathrm{Dis}(F_1)}", from=1-1, to=1-2]
	\arrow["{\mathrm{Dis}(F_2)}", from=1-2, to=1-3]
	\arrow[""{name=2, anchor=center, inner sep=0}, "{O_{1}}"', from=1-1, to=3-2]
	\arrow[""{name=3, anchor=center, inner sep=0}, "{O_{2}}"{description}, from=1-2, to=3-2]
	\arrow["{O_{3}}", from=1-3, to=3-2]
	\arrow["{\tilde{\omega}_2}", shorten <=7pt, shorten >=3pt, Rightarrow, from=1, to=1-7]
	\arrow["{\tilde{\omega}_1}", shorten <=4pt, shorten >=2pt, Rightarrow, from=0, to=1-6]
	\arrow["{\omega_1}", shorten <=5pt, shorten >=2pt, Rightarrow, from=2, to=1-2]
	\arrow["{\omega_2}", shorten <=8pt, shorten >=4pt, Rightarrow, from=3, to=1-3]
\end{tikzcd}\]

\medskip{}

An example of DisiMonSiMon category is the ``DisiMonSiMon category
of finite conditional probability spaces'', denoted as $\mathsf{FinCondProb}$,
with: 
\begin{itemize}
\item $\mathsf{C}=\mathsf{FinProb}$.
\item $O:\mathrm{Dis}(\mathsf{FinProb})\to\mathsf{MonSiMonCat}$ sends $Q\in\mathsf{FinProb}$
to $O(Q)=(\mathsf{FinProb}/Q,\,-/(\mathsf{FinProb}/Q))$, the ``MonSiMon
category of conditional probability spaces conditioned on $Q$'',
with tensor product given by the conditional product.
\item $\tilde{O}:\mathsf{FinProb}\to\mathsf{SiCat}$ defined as:
\begin{itemize}
\item For object $Q\in\mathsf{FinProb}$, $\tilde{O}(Q)=U_{\mathrm{MM}}(O(Q))$.
\item For morphism $f:Q\to Q'$, $\tilde{O}(f)=(F,\gamma)$ is a GC functor
(morphism in $\mathsf{SiCat}$, defined in a similar way as MonSiMon
functor) where: 
\begin{itemize}
\item $F:\mathsf{FinProb}/Q\to\mathsf{FinProb}/Q'$ is the postcomposition
functor by $f$, a ``functor sending a conditional probability space
conditioned on $Q$ to that conditioned on $Q'$''.
\item The natural transformation 
\[
\gamma:-/(\mathsf{FinProb}/Q)\Rightarrow F^{\mathrm{op}}(-)/(\mathsf{FinProb}/Q')
\]
 has component 
\[
\gamma_{g}:g/(\mathsf{FinProb}/Q)\to fg/(\mathsf{FinProb}/Q')
\]
 for $g:P\to Q$ given by the postcomposition by $f$:
\begin{align*}
\gamma_{g}\left(\begin{array}{ccccc}
 &  & A\\
 & \!\!\!\stackrel{x}{\nearrow}\!\!\! &  & \!\!\!\stackrel{a}{\searrow}\!\!\!\\
P\! &  & \!\!\stackrel{g}{\longrightarrow}\!\! &  & \!Q
\end{array}\right) & \;=\;\left(\begin{array}{ccccc}
 &  & A\\
 & \!\!\!\stackrel{x}{\nearrow}\!\!\! &  & \!\!\!\stackrel{fa}{\searrow}\!\!\!\\
P\! &  & \!\!\stackrel{fg}{\longrightarrow}\!\! &  & \!Q'
\end{array}\right),
\end{align*}
which is a ``functor sending a conditional random variable conditioned
on $Q$ to that conditioned on $Q'$''.
\end{itemize}
\end{itemize}
\end{itemize}
\medskip{}

Note that $\gamma_{g}$ is not a monoidal functor, and $\gamma$ is
not a monoidal natural transformation, which is fine since functoriality
is only required for $\tilde{O}:\mathsf{FinProb}\to\mathsf{SiCat}$
(where $\mathsf{SiCat}$ is like $\mathsf{MonSiMonCat}$ but without
any monoidal structure), not for $O:\mathrm{Dis}(\mathsf{FinProb})\to\mathsf{MonSiMonCat}$
(since the domain is discrete). 

\medskip{}

\subsection{Embedding $\mathsf{OrdCMon}$ into $\mathsf{DisiMonSiMonCat}$}

Similar to how we embed $\mathsf{OrdCMon}$ into $\mathsf{MonSiMonCat}$,
we can also embed it into $\mathsf{DisiMonSiMonCat}$.

\medskip{}

\begin{defn}
[Embedding $\mathsf{OrdCMon}$ into $\mathsf{DisiMonSiMonCat}$]
\label{def:gcmcat-1-1-1-1}Define a functor $R_{\mathrm{DiM}}:\mathsf{OrdCMon}\to\mathsf{DisiMonSiMonCat}$
which maps the object $\mathsf{W}\in\mathsf{OrdCMon}$ to $(\mathbf{B}(\mathsf{W}),\,R(\mathsf{W}),\,\mathbf{A}_{\mathsf{W}})\in\mathsf{DisiMonSiMonCat}$,
\[\begin{tikzcd}
	{*} & {\mathbf{B}(\mathsf{W})} \\
	{\mathsf{MonSiMonCat}} & {\mathsf{SiCat}}
	\arrow[hook, from=1-1, to=1-2]
	\arrow["{U_{\mathrm{MM}}}", from=2-1, to=2-2]
	\arrow["{R(\mathsf{W})}"', from=1-1, to=2-1]
	\arrow["{\mathbf{A}_{\mathsf{W}}}", from=1-2, to=2-2]
\end{tikzcd}\]where 
\begin{itemize}
\item $\mathbf{B}:\mathsf{CMon}\to\mathsf{Cat}$ is the delooping functor
sending the commutative monoid $M\in\mathsf{CMon}$ to the category
$\mathbf{B}(M)\in\mathsf{Cat}$ with one object $\bullet$ with a
monoid of endomorphisms $\mathrm{hom}_{\mathbf{B}(M)}(\bullet,\bullet)=M$,
and sending a monoid homomorphism $f:M_{1}\to M_{2}$ to the functor
sending the endomorphism $x\in\mathrm{hom}_{\mathbf{B}(M_{1})}(\bullet,\bullet)=M_{1}$
to $f(x)\in\mathrm{hom}_{\mathbf{B}(M_{2})}(\bullet,\bullet)=M_{2}$.\footnote{We write $\mathbf{B}(\mathsf{W})=\mathbf{B}(U(\mathsf{W}))$ where
$U:\mathsf{OrdCMon}\to\mathsf{CMon}$ is the forgetful functor. We
omit $U$ for brevity.} 
\item $R(\mathsf{W})$ denotes the functor $*\to\mathsf{MonSiMonCat}$ sending
the unique object to $R(\mathsf{W})\in\mathsf{MonSiMonCat}$ (see
Lemma \ref{lem:lgcm}).
\item $\mathbf{A}_{\mathsf{W}}:\mathbf{B}(\mathsf{W})\to\mathsf{SiCat}$
is the ``multiplication action'' which sends the only object to
$U_{\mathrm{MM}}(R(\mathsf{W}))$, and the morphism $x\in\mathsf{W}$
(recall that $\mathrm{hom}_{\mathbf{B}(M)}(\bullet,\bullet)$ is $\mathsf{W}$
as a monoid) to the morphism $(!,\gamma)$ in $\mathsf{SiCat}$, where
$!:*\to*$ is the unique functor, and $\gamma:(*\stackrel{\mathsf{W}}{\to}\mathsf{Cat})\Rightarrow(*\stackrel{\mathsf{W}}{\to}\mathsf{Cat})$
has component $\gamma_{\bullet}:\mathsf{W}\to\mathsf{W}$ given by
left multiplication $\gamma_{\bullet}=x\otimes-$.
\end{itemize}
For a morphism $F:\mathsf{W}_{1}\to\mathsf{W}_{2}$ in $\mathsf{OrdCMon}$,
we take $R_{\mathrm{DiM}}(F)=(\mathbf{B}(F),R(F),\tilde{\omega})$:\\
\[\begin{tikzcd}[column sep=small]
	{*} & {*} & {\mathbf{B}(\mathsf{W}_2)} && {*} & {\mathbf{B}(\mathsf{W}_1)} & {\mathbf{B}(\mathsf{W}_2)} \\
	&&& {=} \\
	& {\mathsf{MonSiMonCat}} & {\mathsf{SiCat}} && {\mathsf{MonSiMonCat}} & {\mathsf{SiCat}}
	\arrow["{U_{\mathrm{MM}}}", from=3-2, to=3-3]
	\arrow[""{name=0, anchor=center, inner sep=0}, "{R(\mathsf{W}_1)}"', from=1-1, to=3-2]
	\arrow["{\mathbf{A}_{\mathsf{W}_2}}", from=1-3, to=3-3]
	\arrow[from=1-1, to=1-2]
	\arrow[hook, from=1-2, to=1-3]
	\arrow["{R(\mathsf{W}_2)}"{description}, from=1-2, to=3-2]
	\arrow[hook, from=1-5, to=1-6]
	\arrow["{U_{\mathrm{MM}}}", from=3-5, to=3-6]
	\arrow["{R(\mathsf{W}_1)}"', from=1-5, to=3-5]
	\arrow[""{name=1, anchor=center, inner sep=0}, "{\mathbf{A}_{\mathsf{W}_1}}"{description}, from=1-6, to=3-6]
	\arrow["{\mathbf{B}(F)}", from=1-6, to=1-7]
	\arrow["{\mathbf{A}_{\mathsf{W}_2}}", from=1-7, to=3-6]
	\arrow["{\mathrm{id}}"{description}, shorten <=9pt, shorten >=9pt, Rightarrow, from=3-2, to=1-3]
	\arrow["{\mathrm{id}}"{description}, shorten <=9pt, shorten >=9pt, Rightarrow, from=3-5, to=1-6]
	\arrow["{R(F)}"{description}, shorten <=5pt, Rightarrow, from=0, to=1-2]
	\arrow["{\tilde{\omega}}", shorten <=6pt, Rightarrow, from=1, to=1-7]
\end{tikzcd}\]where $\tilde{\omega}:\mathbf{A}_{\mathsf{W}_{1}}\Rightarrow\mathbf{A}_{\mathsf{W}_{2}}\mathbf{B}(F)$
with component $\tilde{\omega}_{\bullet}:U_{\mathrm{MM}}(R(\mathsf{W}_{1}))\to U_{\mathrm{MM}}(R(\mathsf{W}_{2}))$
being a morphism $(!,\gamma)$ in $\mathsf{SiCat}$, where $\gamma:(*\stackrel{\mathsf{W}_{1}}{\to}\mathsf{Cat})\Rightarrow(*\stackrel{\mathsf{W}_{2}}{\to}\mathsf{Cat})$
has component $\gamma_{\bullet}=F:\mathsf{W}_{1}\to\mathsf{W}_{2}$.
\end{defn}

\medskip{}

\subsection{Chain Rule}

Similar to how Section \ref{sec:univ} showed that a MonSiMon functor
from $\mathsf{C}\in\mathsf{MonSiMonCat}$ to $\mathsf{W}\in\mathsf{OrdCMon}$
must satisfy the monotonicity, additivity and subadditivity properties,
we now show that a DisiMonSiMon functor $(F,\omega,\tilde{\omega})$
from $(\mathsf{C},O,\tilde{O})\in\mathsf{DisiMonSiMonCat}$ to $R_{\mathrm{DiM}}(\mathsf{W})$
(where $\mathsf{W}\in\mathsf{OrdCMon}$) must satisfy the chain rule
\eqref{eq:chain}. Denote the tensor product and the tensor unit of
$\mathsf{W}$ by $+$ and $0$ respectively. By the definition of
DisiMonSiMon functor, the following two compositions are the same:\\
\[\begin{tikzcd}[column sep=small]
	{\mathrm{Dis}(\mathsf{C})} & {*} & {\mathbf{B}(\mathsf{W})} && {\mathrm{Dis}(\mathsf{C})} & {\mathsf{C}} & {\mathbf{B}(\mathsf{W})} \\
	&&& {=} \\
	& {\mathsf{MonSiMonCat}} & {\mathsf{SiCat}} && {\mathsf{MonSiMonCat}} & {\mathsf{SiCat}}
	\arrow["{U_{\mathrm{MM}}}", from=3-2, to=3-3]
	\arrow[""{name=0, anchor=center, inner sep=0}, "O"', from=1-1, to=3-2]
	\arrow["{\mathbf{A}_\mathsf{W}}", from=1-3, to=3-3]
	\arrow["{\mathrm{Dis}(F)}", from=1-1, to=1-2]
	\arrow[hook, from=1-2, to=1-3]
	\arrow["{R(\mathsf{W})}"{description}, from=1-2, to=3-2]
	\arrow[hook, from=1-5, to=1-6]
	\arrow["{U_{\mathrm{MM}}}", from=3-5, to=3-6]
	\arrow["O"', from=1-5, to=3-5]
	\arrow[""{name=1, anchor=center, inner sep=0}, "{\tilde{O}}"', from=1-6, to=3-6]
	\arrow["F", from=1-6, to=1-7]
	\arrow["{\mathbf{A}_\mathsf{W}}", from=1-7, to=3-6]
	\arrow["{\mathrm{id}}"{description}, shorten <=9pt, shorten >=9pt, Rightarrow, from=3-2, to=1-3]
	\arrow["{\mathrm{id}}"{description}, shorten <=9pt, shorten >=9pt, Rightarrow, from=3-5, to=1-6]
	\arrow["\omega", shorten <=5pt, Rightarrow, from=0, to=1-2]
	\arrow["{\tilde{\omega}}", shorten <=6pt, Rightarrow, from=1, to=1-7]
\end{tikzcd}\] Hence, for $P\in\mathsf{C}$,
\begin{equation}
U_{\mathrm{MM}}(\omega_{P})=\tilde{\omega}_{P}.\label{eq:omegatilde}
\end{equation}
By the naturality of $\tilde{\omega}$, for $g:P\to Q$ in $\mathsf{C}$,
we have 
\begin{equation}
\mathbf{A}_{\mathsf{W}}(F(g))\tilde{\omega}_{P}=\tilde{\omega}_{Q}\tilde{O}(g).\label{eq:condent_nat}
\end{equation}
Let the MonSiMon category $O(P)$ be $(\mathsf{D}_{P},N_{P})$ (same
for $O(Q)$). Let the GC functor $\tilde{\omega}_{P}:\tilde{O}(P)\to\mathbf{A}_{\mathsf{W}}(F(P))$
be $(!,\tilde{h}_{P})$, where $\tilde{h}_{P}:N_{P}\Rightarrow\Delta_{\mathsf{W}}$.
Let the GC functor $\tilde{O}(g)$ be $(G,\gamma)$, $G:\mathsf{D}_{P}\to\mathsf{D}_{Q}$,
$\gamma:N_{P}\Rightarrow N_{Q}G^{\mathrm{op}}$. Equation \eqref{eq:condent_nat}
becomes\\
\[\begin{tikzcd}
	{\mathsf{D}_{P}^\mathrm{op}} & {*} &&& {*} && {\mathsf{D}_{P}^\mathrm{op}} & {\mathsf{D}_{Q}^\mathrm{op}} & {*} \\
	&&&&& {=} \\
	& {\mathsf{Cat}} &&&&&& {\mathsf{Cat}}
	\arrow[""{name=0, anchor=center, inner sep=0}, "{N_P}"', from=1-7, to=3-8]
	\arrow[""{name=1, anchor=center, inner sep=0}, "{N_Q}"{description}, from=1-8, to=3-8]
	\arrow["{G^\mathrm{op}}", from=1-7, to=1-8]
	\arrow["{!}", from=1-8, to=1-9]
	\arrow["{\mathsf{W}}", from=1-9, to=3-8]
	\arrow[""{name=2, anchor=center, inner sep=0}, "{N_P}"', from=1-1, to=3-2]
	\arrow["{!}", from=1-1, to=1-2]
	\arrow["{!}", from=1-2, to=1-5]
	\arrow[""{name=3, anchor=center, inner sep=0}, "{\mathsf{W}}"{description}, from=1-2, to=3-2]
	\arrow["{\mathsf{W}}", from=1-5, to=3-2]
	\arrow["\gamma", shorten <=4pt, shorten >=4pt, Rightarrow, from=0, to=1-8]
	\arrow["{h_Q}", shorten <=6pt, shorten >=6pt, Rightarrow, from=1, to=1-9]
	\arrow["{\tilde{h}_{P}}", shorten <=4pt, shorten >=4pt, Rightarrow, from=2, to=1-2]
	\arrow["{F(g)+-}"{pos=0.6}, shorten <=17pt, shorten >=17pt, Rightarrow, from=3, to=1-5]
\end{tikzcd}\]

For $E\in\mathsf{D}_{P}$, $X\in N_{P}(E)$, we have
\[
F(g)+\tilde{h}_{P,E}(X)=\tilde{h}_{Q,G(E)}(\gamma_{E}(X)).
\]
Let the MonSiMon functor $\omega_{P}:O(P)\to R(\mathsf{W})$ be $(!,h_{P})$,
where $h_{P}:N_{P}\Rightarrow\Delta_{\mathsf{W}}$ is a monoidal natural
transformation. By \eqref{eq:omegatilde}, we also have
\begin{equation}
F(g)+h_{P,E}(X)=h_{Q,G(E)}(\gamma_{E}(X)).\label{eq:chain_gen}
\end{equation}

Taking $E=I_{\mathsf{D}_{P}}$ and $X=\epsilon_{P,\bullet}$ where
$\epsilon_{P}:*\to N(I_{\mathsf{D}_{P}})$ is one of the coherence
maps of the lax monoidal functor $N$, we have $h_{P,E}(X)=0$ since
$h_{P}$ is a monoidal natural transformation, and hence by \eqref{eq:chain_gen},
$F(g)=h_{Q,G(I_{\mathsf{D}_{P}})}(\gamma_{I_{\mathsf{D}_{P}}}(\epsilon_{P,\bullet}))$.
Substituting back to \eqref{eq:chain_gen},
\begin{equation}
h_{Q,G(I_{\mathsf{D}_{P}})}(\gamma_{I_{\mathsf{D}_{P}}}(\epsilon_{P,\bullet}))+h_{P,E}(X)=h_{Q,G(E)}(\gamma_{E}(X)).\label{eq:chain_sub}
\end{equation}
Here $h_{P,E}(X)$ should be interpreted as ``the conditional entropy
of the random variable $X\in N_{P}(E)$ conditioned on $P$''.

We now consider the DisiMonSiMon category $\mathsf{FinCondProb}$
discussed earlier. For a chain of morphisms $K\stackrel{f}{\to}P\stackrel{g}{\to}Q$
in $\mathsf{FinProb}$, taking $X=\mathrm{id}_{K}$, \eqref{eq:chain_sub}
gives
\begin{equation}
h_{Q,g}(\mathrm{id}_{P})+h_{P,f}(\mathrm{id}_{K})=h_{Q,gf}(\mathrm{id}_{K}),\label{eq:chain_finprob}
\end{equation}
or informally, ``$H(P|Q)+H(K|P)=H(K|Q)$'', which is basically the
chain rule \eqref{eq:chain}. 

\medskip{}

\subsection{$\mathsf{OrdCMon}$ as a Reflective Subcategory of $\mathsf{DisiMonSiMonCat}$}

Similar to Lemma \ref{lem:lgcm}, we can show that $R_{\mathrm{DiM}}$
is fully faithful and has a left adjoint. Therefore, $\mathsf{OrdCMon}$
is a reflective subcategory of $\mathsf{DisiMonSiMonCat}$. \medskip{}

\begin{lem}
\label{lem:lgcm-1}$R_{\mathrm{DiM}}:\mathsf{OrdCMon}\to\mathsf{DisiMonSiMonCat}$
is fully faithful, and has a left adjoint $L_{\mathrm{DiM}}:\mathsf{DisiMonSiMonCat}\to\mathsf{OrdCMon}$.
Hence, $\mathsf{OrdCMon}$ is a reflective subcategory of $\mathsf{DisiMonSiMonCat}$.
\end{lem}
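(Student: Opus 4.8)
The plan is to mirror the proof of Lemma~\ref{lem:lgcm}, first establishing full faithfulness by a direct computation and then constructing $L_{\mathrm{DiM}}$ by generators and relations.

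For full faithfulness, I would unpack a DisiMonSiMon functor $(F,\omega,\tilde\omega):R_{\mathrm{DiM}}(\mathsf{W}_1)\to R_{\mathrm{DiM}}(\mathsf{W}_2)$. Since $\mathbf{B}(\mathsf{W}_i)$ is a one-object category with endomorphism monoid $\mathsf{W}_i$, the functor $F:\mathbf{B}(\mathsf{W}_1)\to\mathbf{B}(\mathsf{W}_2)$ is exactly a monoid homomorphism $f:\mathsf{W}_1\to\mathsf{W}_2$ on underlying monoids. Because $\mathrm{Dis}(\mathbf{B}(\mathsf{W}_1))=*$, the natural transformation $\omega$ is a single morphism $\omega_\bullet:R(\mathsf{W}_1)\to R(\mathsf{W}_2)$ in $\mathsf{MonSiMonCat}$, which by Lemma~\ref{lem:lgcm} is an order-preserving homomorphism $g:\mathsf{W}_1\to\mathsf{W}_2$. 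The compatibility condition defining a DisiMonSiMon functor forces $\tilde\omega_\bullet=U_{\mathrm{MM}}(\omega_\bullet)$, so its $\gamma$-component is again $g$. The remaining constraint is the naturality of $\tilde\omega$ over the morphisms $x\in\mathsf{W}_1=\mathrm{hom}_{\mathbf{B}(\mathsf{W}_1)}(\bullet,\bullet)$. Reading off the $\gamma$-components (recall $\mathbf{A}_{\mathsf{W}}(x)$ acts by left multiplication $x\otimes-$), this naturality square reads $f(x)+g(w)=g(x+w)$ for all $x,w\in\mathsf{W}_1$; setting $w=0$ and using $g(0)=0$ yields $f=g$. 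Hence the whole datum $(F,\omega,\tilde\omega)$ is determined by the single order-preserving homomorphism $g$, and conversely every such $g$ produces a valid DisiMonSiMon functor (namely $R_{\mathrm{DiM}}(g)$), giving the bijection $\mathsf{OrdCMon}(\mathsf{W}_1,\mathsf{W}_2)\cong\mathsf{DisiMonSiMonCat}(R_{\mathrm{DiM}}(\mathsf{W}_1),R_{\mathrm{DiM}}(\mathsf{W}_2))$.

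For the left adjoint, I would first identify what a DisiMonSiMon functor $(\mathsf{C},O,\tilde O)\to R_{\mathrm{DiM}}(\mathsf{W})$ amounts to, exactly as in the ``Chain Rule'' subsection: a family of $\mathsf{W}$-entropies $\omega_P=(!,h_P)$ of each MonSiMon category $O(P)=(\mathsf{D}_P,N_P)$, together with the chain-rule relation~\eqref{eq:chain_sub} enforced by the naturality~\eqref{eq:condent_nat} of $\tilde\omega$ across each $g:P\to Q$ in $\mathsf{C}$, where the conditional-entropy value $F(g)=h_{Q,G(I_{\mathsf{D}_P})}(\gamma_{I_{\mathsf{D}_P}}(\epsilon_{P,\bullet}))$ is itself recovered from the $h$'s. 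Accordingly, I would set $L_{\mathrm{DiM}}((\mathsf{C},O,\tilde O))=\bar{\mathsf{N}}$, where $\bar{\mathsf{N}}$ is built from the free commutative monoid on $\coprod_{P\in\mathsf{C}}\coprod_{E\in\mathsf{D}_P}\mathrm{Ob}(N_P(E))$ by imposing (i) for each $P$, all the relations from the proof of Lemma~\ref{lem:lgcm} that make the induced map a $\mathsf{W}$-entropy of $O(P)$, and (ii) for each $g:P\to Q$ with $\tilde O(g)=(G,\gamma)$ and each $X\in N_P(E)$, the chain-rule relation identifying $\gamma_E(X)$ with $\gamma_{I_{\mathsf{D}_P}}(\epsilon_{P,\bullet})+X$ in both directions; then taking the smallest monoidal preorder $\mathrm{Tra}(\mathrm{Mon}(\mathcal R))$ and quotienting as in Lemma~\ref{lem:lgcm}. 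The unit $\phi$ sends each random variable to its class and each $g$ to the class of $\gamma_{I_{\mathsf{D}_P}}(\epsilon_{P,\bullet})$, and the universal property of $\bar{\mathsf{N}}$ gives the required natural bijection with $\mathsf{OrdCMon}(\bar{\mathsf{N}},\mathsf{W})$.

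I expect the main obstacle to be verifying that the chain-rule relations in (ii) are exactly equivalent to the naturality of $\tilde\omega$, and in particular that the assignment $g\mapsto[\gamma_{I_{\mathsf{D}_P}}(\epsilon_{P,\bullet})]$ is \emph{functorial}, i.e.\ $[c_{g_2 g_1}]=[c_{g_2}]+[c_{g_1}]$ in $\bar{\mathsf{N}}$, so that it assembles into a genuine functor $F:\mathsf{C}\to\mathbf{B}(\bar{\mathsf{N}})$ realizing the chain rule~\eqref{eq:chain_finprob}. This reduces to chasing~\eqref{eq:chain_gen} along a composite $K\stackrel{f}{\to}P\stackrel{g}{\to}Q$ using the functoriality of $\tilde O$ and the monoidal-natural-transformation identities already imposed in (i); once this consistency is checked, the compatibility $U_{\mathrm{MM}}(\omega_P)=\tilde\omega_P$ of~\eqref{eq:omegatilde} and the naturality of the unit follow routinely, and the adjunction $L_{\mathrm{DiM}}\dashv R_{\mathrm{DiM}}$ is immediate.
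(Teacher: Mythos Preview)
Your full-faithfulness argument is essentially identical to the paper's: both use the compatibility $U_{\mathrm{MM}}(\omega_\bullet)=\tilde\omega_\bullet$ and the naturality of $\tilde\omega$ over endomorphisms $x\in\mathsf{W}_1$ to force $F(x)=\tilde\gamma_\bullet(x)$, so the whole functor is determined by $\omega$.

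For the left adjoint, your construction is correct but organized differently from the paper's. The paper takes as generating set the disjoint union of $\mathrm{Ob}(\mathrm{Arr}(\mathsf{C}))$ with $\bigsqcup_P\mathrm{Ob}(L(O(P)))$, i.e.\ it adds the morphisms of $\mathsf{C}$ as \emph{explicit generators} and imposes the relation $(gf,\,g+f)$ directly; this makes the functor $F:\mathsf{C}\to\mathbf{B}(\bar{\mathsf{N}})$ well-defined and strictly functorial by construction, with no verification needed. You instead generate only from the random-variable objects $\coprod_{P,E}\mathrm{Ob}(N_P(E))$ and \emph{recover} $F(g)$ as the class of $\gamma_{I_{\mathsf{D}_P}}(\epsilon_{P,\bullet})$; your ``main obstacle'' (functoriality of $g\mapsto[c_g]$) is then a genuine lemma, though as you correctly sketch it follows by applying the chain-rule relation for $g_2$ at $E=G_1(I_{\mathsf{D}_K})$, $X=c_{g_1}$ and using $\tilde O(g_2g_1)=\tilde O(g_2)\tilde O(g_1)$. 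The paper also bundles the per-$P$ relations by using the already-computed reflections $L(O(P))$ as generators rather than raw $N_P(E)$ objects, which is a minor bookkeeping simplification. In short: the paper's route trades a larger generating set for trivial functoriality, while yours is more economical in generators at the cost of the extra check you identified; both are valid.
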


\begin{proof}
First check that $R_{\mathrm{DiM}}$ is fully faithful. Consider
a DisiMonSiMon functor $(F,\omega,\tilde{\omega}):R_{\mathrm{DiM}}(\mathsf{W}_{1})\to R_{\mathrm{DiM}}(\mathsf{W}_{2})$,
where $\mathsf{W}_{1},\mathsf{W}_{2}\in\mathsf{OrdCMon}$. We have
the following (omitting obvious embedding functors):\\
\[\begin{tikzcd}[column sep=small]
	{*} & {*} & {\mathbf{B}(\mathsf{W}_2)} && {*} & {\mathbf{B}(\mathsf{W}_1)} & {\mathbf{B}(\mathsf{W}_2)} \\
	&&& {=} \\
	& {\mathsf{MonSiMonCat}} & {\mathsf{SiCat}} && {\mathsf{MonSiMonCat}} & {\mathsf{SiCat}}
	\arrow["{U_{\mathrm{MM}}}", from=3-2, to=3-3]
	\arrow[""{name=0, anchor=center, inner sep=0}, "{\mathsf{W}_1}"', from=1-1, to=3-2]
	\arrow["{\mathbf{A}_{\mathsf{W}_2}}", from=1-3, to=3-3]
	\arrow[from=1-1, to=1-2]
	\arrow[hook, from=1-2, to=1-3]
	\arrow["{\mathsf{W}_2}", from=1-2, to=3-2]
	\arrow[hook, from=1-5, to=1-6]
	\arrow["{U_{\mathrm{MM}}}", from=3-5, to=3-6]
	\arrow["{\mathsf{W}_1}"', from=1-5, to=3-5]
	\arrow[""{name=1, anchor=center, inner sep=0}, "{\mathbf{A}_{\mathsf{W}_1}}"', from=1-6, to=3-6]
	\arrow["{\mathbf{B}(F)}", from=1-6, to=1-7]
	\arrow["{\mathbf{A}_{\mathsf{W}_2}}", from=1-7, to=3-6]
	\arrow["{\mathrm{id}}"{description}, shorten <=9pt, shorten >=9pt, Rightarrow, from=3-2, to=1-3]
	\arrow["{\mathrm{id}}"{description}, shorten <=9pt, shorten >=9pt, Rightarrow, from=3-5, to=1-6]
	\arrow["\omega", shorten <=5pt, Rightarrow, from=0, to=1-2]
	\arrow["{\tilde{\omega}}", shorten <=6pt, Rightarrow, from=1, to=1-7]
\end{tikzcd}\]

Hence, $U_{\mathrm{MM}}(\omega_{\bullet})=\tilde{\omega}_{\bullet}$.
Let the GC functor $\tilde{\omega}_{\bullet}:U_{\mathrm{MM}}(R(\mathsf{W}_{1}))\to U_{\mathrm{MM}}(R(\mathsf{W}_{2}))$
be $(!,\tilde{\gamma})$, where $\tilde{\gamma}:\Delta_{\mathsf{W}_{1}}\Rightarrow\Delta_{\mathsf{W}_{2}}$.
By the naturality of $\tilde{\omega}$,  $F(x)\otimes\tilde{\gamma}_{\bullet}(y)=\tilde{\gamma}_{\bullet}(x\otimes y)$
for all $x,y\in\mathsf{W}_{1}$. Taking $y$ to be the unit, we obtain
$F(x)=\tilde{\gamma}_{\bullet}(x)$. Hence $(F,\omega,\tilde{\omega})$
is determined by $\omega$.

Consider a DisiMonSiMon category $(\mathsf{C},O,\tilde{O})$. For
$P\in\mathsf{C}$, consider $\mathrm{Ob}(L(O(P)))$, which is the
codomain of the universal $\mathsf{OrdCMon}$ entropy of $O(P)$ as
a set. Let $(!,h_{P})$ be the universal $\mathsf{OrdCMon}$ entropies
of $O(P)$, where $h_{P}:N_{P}\Rightarrow\Delta_{L(O(P))}$ is a monoidal
natural transformation. Let 
\[
S=\mathrm{Ob}(\mathrm{Arr}(\mathsf{C}))\sqcup\bigsqcup_{P\in\mathsf{C}}\mathrm{Ob}(L(O(P)))\in\mathsf{Set}
\]
be the disjoint set union of the set of morphisms in $\mathsf{C}$
and the aforementioned codomains. Let $\mathrm{F}(S)$ be the commutative
monoid generated by the set $S$, with product denoted as $+$. Consider
a binary relation $\mathcal{R}$ over $\mathrm{F}(S)$, where the
following pairs are related for every $P\in\mathsf{C}$:
\begin{itemize}
\item $(gf,\,g+f)$ and $(g+f,\,gf)$ for $P_{1}\stackrel{f}{\to}P_{2}\stackrel{g}{\to}P_{3}$
in $\mathsf{C}$;
\item $(A,B)$ for every morphism $g:A\to B$ in $L(O(P))$;
\item $(A\otimes B,\,A+B)$ and $(A+B,\,A\otimes B)$ for every $A,B\in L(O(P))$;
\item $(g+h_{P,E}(X),\,h_{Q,G(E)}(\gamma_{E}(X)))$ and $(h_{Q,G(E)}(\gamma_{E}(X)),\,g+h_{P,E}(X))$
for $g:P\to Q$ in $\mathsf{C}$, where we let $O(P)$ be $(\mathsf{D}_{P},N_{P})$,
$\tilde{O}(g)$ be $(G,\gamma)$, and consider every $E\in\mathsf{D}_{P}$,
$X\in N_{P}(E)$ (see \eqref{eq:chain_gen});
\item $(I_{L(O(P))},\,I)$ and $(I,\,I_{L(O(P))})$, where $I$ and $I_{L(O(P))}$
are the monoidal units of $\mathrm{F}(S)$ and $L(O(P))$ respectively.
\end{itemize}
Consider the transitive monoidal closure $\mathrm{Tra}(\mathrm{Mon}(\mathcal{R}))$
of $\mathcal{R}$, which is the smallest monoidal preorder containing
$\mathcal{R}$. Let $L_{\mathrm{DiM}}((\mathsf{C},O,\tilde{O}))\in\mathsf{OrdCMon}$
be the symmetric monoidal posetal category where objects are equivalent
classes in $\mathrm{Tra}(\mathrm{Mon}(\mathcal{R}))$, tensor product
is given by $+$, and $X\ge Y$ (there is a morphism $X\to Y$) if
$(X,Y)\in\mathrm{Tra}(\mathrm{Mon}(\mathcal{R}))$. It is straightforward
to check that $L_{\mathrm{DiM}}$ is left adjoint to $R_{\mathrm{DiM}}$.
The rest of the proof is straightforward and omitted.
\end{proof}
\medskip{}

\subsection{Universal Conditional Entropy}

We can define the universal conditional $\mathsf{V}$-entropy in the
same manner as Definition \ref{def:gcm_entropy}.

\medskip{}

\begin{defn}
[Universal conditional $\mathsf{V}$-entropy]\label{def:gcm_condentropy}Consider
a DisiMonSiMon category $\mathsf{C}\in\mathsf{DisiMonSiMonCat}$,
and a reflective subcategory $\mathsf{V}$ of $\mathsf{OrdCMon}$,
the \emph{universal conditional} $\mathsf{V}$\emph{-entropy} of $\mathsf{C}$
is the reflection morphism from $\mathsf{C}$ to $\mathsf{V}$. 
\end{defn}

\medskip{}

We now prove that the conditional Shannon entropy is the universal
conditional $\mathsf{IcOrdCMon}$-entropy of $\mathsf{FinCondProb}$,
the category of finite conditional probability spaces.

\medskip{}

\begin{thm}
\label{thm:cond_ent}The universal conditional $\mathsf{IcOrdCMon}$-entropy
of $\mathsf{FinCondProb}$ is given by the conditional Shannon entropy
as a DisiMonSiMon functor $(F,\omega,\tilde{\omega}):\mathsf{FinCondProb}\to R_{\mathrm{DiM}}(\mathbb{R}_{\ge0})$,
where $F:\mathsf{FinProb}\to\mathbf{B}(\mathbb{R}_{\ge0})$ maps the
morphism $f:P\to Q$ to $H_{1}(P)-H_{1}(Q)$, the MonSiMon functor
$\omega_{Q}:(-/(\mathsf{FinProb}/Q))\to R(\mathbb{R}_{\ge0})$ is
given by $(!,h_{Q})$ with a component $h_{Q,f}$ for $f\in\mathsf{FinProb}/Q$
given as
\[
h_{Q,f}\left(\begin{array}{ccccc}
 &  & A\\
 & \!\!\!\stackrel{x}{\nearrow}\!\!\! &  & \!\!\!\stackrel{a}{\searrow}\!\!\!\\
P\! &  & \!\!\stackrel{f}{\longrightarrow}\!\! &  & \!Q
\end{array}\right)=H_{1}(A)-H_{1}(Q),
\]
and $\tilde{\omega}_{Q}=U_{\mathrm{MM}}(\omega_{Q})$.
\end{thm}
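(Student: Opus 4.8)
The plan is to verify the two halves of the universal property of the reflection morphism (Definition \ref{def:gcm_condentropy}): first that the stated conditional Shannon entropy is a genuine DisiMonSiMon functor $\mathsf{FinCondProb}\to R_{\mathrm{DiM}}(\mathbb{R}_{\ge0})$, and then that it is initial among all DisiMonSiMon functors into $R_{\mathrm{DiM}}(\mathsf{W})$ with $\mathsf{W}\in\mathsf{IcOrdCMon}$. For the first half, I would check that $F$ is a functor $\mathsf{FinProb}\to\mathbf{B}(\mathbb{R}_{\ge0})$, i.e. that $f\mapsto H_1(P)-H_1(Q)\ge0$ (monotonicity of $H_1$) and that the chain rule \eqref{eq:chain} gives $F(gf)=F(g)+F(f)$; that each $\omega_Q$ is a MonSiMon functor, which amounts to the monotonicity, additivity and subadditivity \eqref{eq:subadd} of conditional Shannon entropy on the slices $f/(\mathsf{FinProb}/Q)$; and that the compatibility/naturality condition \eqref{eq:chain_gen} holds, which is exactly the chain rule in the form \eqref{eq:chain_finprob}. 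These are all standard properties of conditional Shannon entropy, so this half is routine.

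For the universal property, let $(F',\omega',\tilde{\omega}')\colon\mathsf{FinCondProb}\to R_{\mathrm{DiM}}(\mathsf{W})$ be an arbitrary DisiMonSiMon functor with $\mathsf{W}\in\mathsf{IcOrdCMon}$; I must produce a unique $G\colon\mathbb{R}_{\ge0}\to\mathsf{W}$ in $\mathsf{IcOrdCMon}$ factoring it. The first step is to reduce the conditional data to unconditional data: setting $H'(P):=F'(!_P)$ for the unique $!_P\colon P\to *$ and applying \eqref{eq:chain_finprob} to $P\xrightarrow{f}Q\xrightarrow{!}*$ shows $H'(P)=H'(Q)+F'(f)$, so, since $\mathsf{W}$ is cancellative (integral closedness implies cancellativity), $F'(f)$ is determined by $H'$. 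Restricting to $Q=*$ and using $\mathsf{FinProb}/*\cong\mathsf{FinProb}$, the component $\omega'_*$ is an ordinary $\mathsf{IcOrdCMon}$-entropy of $\mathsf{FinProb}$ whose baseless entropy is $H'|_{\mathsf{FinProb}}$ (Proposition \ref{prop:baseless}); Theorem \ref{thm:finprob_ic} then yields a unique order-preserving homomorphism $G_0\colon(H_0,H_1)(\mathsf{FinProb})\to\mathsf{W}$ with $H'(P)=G_0(H_0(P),H_1(P))$.

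The crux is to show that $G_0$ ignores its Hartley coordinate, so that $H'$ factors through $H_1$ alone. Here I exploit the slices $\omega'_Q$ for a nontrivial condition $Q$, whose subadditivity is the conditional inequality $H'(A_1)+H'(A_2)\ge H'(A_{12})+H'(Q)$ for two refinements $A_1,A_2$ of $Q$ sitting over a common $P$, with joint $A_{12}$. The idea is to pick a configuration in which the Shannon parts balance but the Hartley parts do not: taking $A_1=(U,W)$, $A_2=(V,W)$, $A_{12}=(U,V,W)$ and $Q=W$ with $U,V$ conditionally independent given $W$ forces $H_1(A_1)+H_1(A_2)=H_1(A_{12})+H_1(Q)=:t$, while an appropriate choice of supports (e.g. $W\in\{0,1\}$, with $U,V$ each binary and free when $W=0$ but constant when $W=1$) makes $s:=H_0(A_1)+H_0(A_2)$ strictly smaller than $s':=H_0(A_{12})+H_0(Q)$. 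Writing $G_0$ as a homomorphism, conditional subadditivity gives $G_0(s,t)\ge G_0(s',t)$, whereas order-preservation (since $(s',t)\ge(s,t)$) gives the reverse, so $G_0(s,t)=G_0(s',t)$. Passing to Grothendieck groups — with $\widehat{(H_0,H_1)(\mathsf{FinProb})}=(\log\mathbb{Q}_{>0})\times\mathbb{R}$ by Corollary \ref{cor:finprob_ab_vect} and $\hat{\mathsf{W}}\in\mathsf{IcOrdAb}$ — this reads $\hat{G}_0(\delta,0)=0$ for $\delta:=s'-s>0$ in $\log\mathbb{Q}_{>0}$; since $\hat{G}_0(k\delta,0)=0$ for all $k$ and these elements are unbounded, order-preservation forces $n\hat{G}_0(a,0)\le\hat{G}_0(k\delta,0)=0$ for any Hartley value $a\ge0$ and all $n$, while $\hat{G}_0(a,0)\ge0$, whence $\hat{G}_0(a,0)=0$. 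Thus $G_0(a,b)$ depends only on $b$, defining a unique $G\colon\mathbb{R}_{\ge0}\to\mathsf{W}$ with $H'(P)=G(H_1(P))$.

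Finally I would assemble the factorization: $F'(f)=H'(P)-H'(Q)=G(H_1(P))-G(H_1(Q))=G\big(H_1(P)-H_1(Q)\big)=G(F(f))$ (using that $G$ is a homomorphism and $H_1(P)\ge H_1(Q)$), together with the analogous identities for the slice components $\omega'_Q$, so that $(F',\omega',\tilde{\omega}')=R_{\mathrm{DiM}}(G)\circ(F,\omega,\tilde{\omega})$; uniqueness of $G$ is immediate since $H_1$ is surjective onto $\mathbb{R}_{\ge0}$. I expect the main obstacle to be the Hartley-killing step: setting up a conditioned configuration that simultaneously balances the Shannon contributions and breaks Hartley subadditivity, and then the group-theoretic cleanup upgrading a single such relation to the vanishing of $\hat{G}_0$ on the whole Hartley axis. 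The remaining parts are either routine verifications or direct consequences of Theorem \ref{thm:finprob_ic}, Corollary \ref{cor:finprob_ab_vect} and Lemma \ref{lem:lgcm-1}.
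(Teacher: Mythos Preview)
Your proposal is correct and follows the same overall architecture as the paper: verify that the conditional Shannon entropy is a DisiMonSiMon functor, restrict to the slice over the terminal object to invoke Theorem~\ref{thm:finprob_ic} and obtain a factorization through $(H_0,H_1)$, and then use conditional subadditivity (i.e., submodularity, \eqref{eq:thm_submod}) to show that the Hartley component must vanish.

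The one place where you genuinely diverge is the Hartley-killing step. The paper builds an $\epsilon$-perturbed distribution on $\{0,1\}^3$ that is \emph{not} conditionally independent, so the Shannon sums on the two sides of the submodularity inequality differ by an error $\delta(\epsilon)\to 0$; it then combines this with cancellativity and integral closedness, arguing entirely inside the monoid $\mathsf{W}$, to show first that $b_1>b_2$ forces $t(a_1,b_1)\ge t(a_2,b_2)$ and then that $t(a,b)$ is independent of $a$. You instead choose $U,V$ \emph{exactly} conditionally independent given $W$, so the Shannon sides match identically and you get a single clean identity $G_0(\log 9,t)=G_0(\log 10,t)$; you then pass to the Grothendieck group $(\log\mathbb{Q}_{>0})\times\mathbb{R}$ (legitimate, since $\mathsf{IcOrdCMon}$ embeds into $\mathsf{IcOrdAb}$ by the footnote after Definition~\ref{def:icocm}) and propagate $\hat{G}_0(\log(10/9),0)=0$ along the Hartley axis by sandwiching with multiples $k\log(10/9)$. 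Your route avoids the $\epsilon$-limit bookkeeping at the cost of the group-completion step; both are valid, and the key numerical phenomenon --- Hartley failing submodularity via $\log 9<\log 10$ --- is the same in both.
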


\begin{proof}
It is straightforward to check that $(F,\omega,\tilde{\omega})$ is
a DisiMonSiMon functor. To check the universal property, we now consider
any DisiMonSiMon functor $(F,\omega,\tilde{\omega}):\mathsf{FinProb}\to R_{\mathrm{DiM}}(\mathsf{W})$
where $\mathsf{W}\in\mathsf{IcOrdCMon}$. Assume the MonSiMon functor
$\omega_{Q}:(-/(\mathsf{FinProb}/Q))\to R(\mathsf{W})$ is given by
$(!,h_{Q})$, $h_{Q}:-/(\mathsf{FinProb}/Q)\Rightarrow\Delta_{\mathsf{W}}$.
Consider the degenerate distribution $1\in\mathsf{FinProb}$. Note
that $O(1)=-/(\mathsf{FinProb}/1)$ is isomorphic to $-/\mathsf{FinProb}$
as a MonSiMon category. By Theorem \ref{thm:finprob_ic}, we can let
$\omega_{1}$ be the composition
\[
O(1)\stackrel{(!,(s_{0},s_{1}))}{\longrightarrow}R(\mathsf{W}_{1})\stackrel{R(t)}{\longrightarrow}R(\mathsf{W}),
\]
where $(!,(s_{0},s_{1}))$ is the universal $\mathsf{IcOrdCMon}$-entropy
(pairing of Hartley and Shannon entropies) in \ref{thm:finprob_ic},
$\mathsf{W}_{1}:=(H_{0},H_{1})(\mathsf{FinProb})\subseteq(\log\mathbb{N})\times\mathbb{R}_{\ge0}$,
and $t:\mathsf{W}_{1}\to\mathsf{W}$ is a morphism in $\mathsf{IcOrdCMon}$. 

For $Q\stackrel{!}{\to}1$ in $\mathsf{FinProb}$, let 
\begin{equation}
H(Q):=h_{1,!}(\mathrm{id}_{Q}).\label{eq:hQ_def}
\end{equation}
By \eqref{eq:chain_finprob}, for every $P\stackrel{f}{\to}Q\stackrel{!}{\to}1$
in $\mathsf{FinProb}$, 
\[
h_{1,!}(\mathrm{id}_{Q})+h_{Q,f}(\mathrm{id}_{P})=h_{1,!f}(\mathrm{id}_{P}).
\]
Therefore,
\begin{equation}
h_{Q,f}(\mathrm{id}_{P})=H(P)-H(Q).\label{eq:thm_chain}
\end{equation}
Let $P$ be a probability distribution over $\{0,1\}^{3}$ where 
\[
P(0,0,0)=P(0,1,0)=P(1,0,0)=1/4,
\]
\[
P(1,1,0)=1/4-\epsilon,
\]
\[
P(1,1,1)=\epsilon,
\]
where $0<\epsilon<1/4$. Let $Q$ be the uniform distribution over
$\{0,1\}$. Let $f:P\to Q$ be given by the measure-preserving mapping
$(x,y,z)\mapsto z$. Consider $a,b\in f/(\mathsf{FinProb}/Q)$, $a:P\to A$,
$b:P\to B$, given by the measure-preserving mappings $(x,y,z)\mapsto(z,x)$
and $(x,y,z)\mapsto(y,z)$ respectively. Note that the categorical
product $a\times b$ over the category $f/(\mathsf{FinProb}/Q)$ is
$\mathrm{id}_{P}$. We have the following diagram:\\
\[\begin{tikzcd}
	A & P & B \\
	& Q
	\arrow["f", from=1-2, to=2-2]
	\arrow["a"', from=1-2, to=1-1]
	\arrow["b", from=1-2, to=1-3]
	\arrow["{\pi_1}"', from=1-1, to=2-2]
	\arrow["{\pi_2}", from=1-3, to=2-2]
\end{tikzcd}\]where $\pi_{1},\pi_{2}$ are given by the measure-preserving mappings
$(z,x)\mapsto z$ and $(y,z)\mapsto z$ respectively. By the subadditivity
property of the MonSiMon functor $\omega_{Q}:\mathsf{FinProb}/Q\to\mathsf{W}$,
we have
\[
h_{Q,f}(a)+h_{Q,f}(b)\ge h_{Q,f}(\mathrm{id}_{P}).
\]
By \eqref{eq:invar_extend},
\[
h_{Q,\pi_{1}}(\mathrm{id}_{A})+h_{Q,\pi_{2}}(\mathrm{id}_{B})\ge h_{Q,f}(\mathrm{id}_{P}).
\]
Applying \eqref{eq:thm_chain},
\begin{equation}
H(A)+H(B)\ge H(P)+H(Q),\label{eq:thm_submod}
\end{equation}
which is the submodularity inequality \cite{fujishige1978polymatroidal}.
Note that the Hartley and Shannon entropies of $P,A,B,Q$ are 
\[
H_{0}(P)=\log5,\;H_{1}(P)=\log4+\delta_{P}(\epsilon),
\]
\[
H_{0}(A)=\log3,\;H_{1}(A)=\log2+\delta_{A}(\epsilon),
\]
\[
H_{0}(B)=\log3,\;H_{1}(B)=\log2+\delta_{A}(\epsilon),
\]
\[
H_{0}(Q)=\log2,\;H_{1}(Q)=\delta_{Q}(\epsilon),
\]
where $\delta_{P}(\epsilon),\delta_{A}(\epsilon),\delta_{Q}(\epsilon)\to0$
as $\epsilon\to0$. Hence, \eqref{eq:thm_submod} gives
\[
t(\log9,\log4+2\delta_{A}(\epsilon))\ge t(\log10,\log4+\delta_{P}(\epsilon)+\delta_{Q}(\epsilon)).
\]
Writing $\delta(\epsilon):=2\delta_{A}(\epsilon)$, $\delta'(\epsilon):=\delta_{P}(\epsilon)+\delta_{Q}(\epsilon)$,
we have
\[
t(\log9,\,\log4+\delta(\epsilon))\ge t(\log10,\,\log4+\delta'(\epsilon)).
\]
We then prove that for $(a_{1},b_{1}),(a_{2},b_{2})\in\mathsf{W}_{1}$,
if $b_{1}>b_{2}$, then we have $t(a_{1},b_{1})\ge t(a_{2},b_{2})$.
To prove this, consider $n\in\mathbb{N}$ large enough such that $n\log10+a_{1}>n\log9+a_{2}$.
Let $\epsilon$ be small enough such that $n\delta'(\epsilon)+b_{1}>n\delta(\epsilon)+b_{2}$.
Since $t$ is an order-preserving homomorphism,
\begin{align*}
 & nt(\log9,\,\log4+\delta(\epsilon))+t(a_{1},b_{1})\\
 & \ge nt(\log10,\,\log4+\delta'(\epsilon))+t(a_{1},b_{1})\\
 & =t(n\log10+a_{1},\,n\log4+n\delta'(\epsilon)+b_{1})\\
 & \ge t(n\log9+a_{2},\,n\log4+n\delta(\epsilon)+b_{2})\\
 & =nt(\log9,\,\log4+\delta(\epsilon))+t(a_{2},b_{2}).
\end{align*}
Since $\mathsf{W}$ is cancellative, $t(a_{1},b_{1})\ge t(a_{2},b_{2})$.

We now prove that $t(a,b)$ does not depend on $a$. Fix $a_{1},a_{2},b$
such that $(a_{1},b),(a_{2},b)\in\mathsf{W}_{1}$. For any $n\in\mathbb{N}$,
we have $nb+\log2>nb$, and hence $nt(a_{1},b)+t(\log2,\log2)=t(na_{1}+\log2,nb+\log2)\ge t(na_{2},nb)=nt(a_{2},b)$.
By integral closedness, $t(a_{1},b)\ge t(a_{2},b)$. We also have
$t(a_{2},b)\ge t(a_{1},b)$, and hence $t(a_{1},b)=t(a_{2},b)$. Hence,
$t$ can be factorized as
\[
\mathsf{W}_{1}\stackrel{(!,(s_{0},s_{1}))}{\longrightarrow}\mathbb{R}_{\ge0}\stackrel{t'}{\longrightarrow}\mathsf{W}.
\]
Such $t'$ is clearly an order-preserving homomorphism and is unique.
Therefore, the conditional Shannon entropy functor is the universal
DisiMonSiMon functor from $\mathsf{FinCondProb}$ to $\mathsf{SOrdVect}_{\mathbb{Q}}$.
\end{proof}
\medskip{}

We can also show the same result for $\mathsf{LCondProb}_{\rho}$
and $\mathsf{HCondProb}$ (the conditional versions of $\mathsf{LProb}_{\rho}$
and $\mathsf{HProb}$ defined in the same manner as $\mathsf{FinCondProb}$;
see Section \ref{sec:lrho} for the definitions of $\mathsf{LProb}_{\rho}$
and $\mathsf{HProb}$). The proof is the same as Theorem \ref{thm:cond_ent}
and is omitted.

\medskip{}

\begin{prop}
The universal conditional $\mathsf{IcOrdCMon}$-entropy of $\mathsf{LCondProb}_{\rho}$
(where $0\le\rho<1$) is given by the conditional Shannon entropy
as a DisiMonSiMon functor $(F,\omega,\tilde{\omega}):\mathsf{LCondProb}_{\rho}\to R_{\mathrm{DiM}}(\mathbb{R}_{\ge0})$,
where $F:\mathsf{LProb}_{\rho}\to\mathbf{B}(\mathbb{R}_{\ge0})$ maps
the morphism $f:P\to Q$ to $H_{1}(P)-H_{1}(Q)$, the MonSiMon functor
$\omega_{Q}:(-/(\mathsf{LProb}_{\rho}/Q))\to R(\mathbb{R}_{\ge0})$
is given by $(!,h_{Q})$ with a component $h_{Q,f}$ for $f\in\mathsf{LProb}_{\rho}/Q$
given as
\[
h_{Q,f}\left(\begin{array}{ccccc}
 &  & A\\
 & \!\!\!\stackrel{x}{\nearrow}\!\!\! &  & \!\!\!\stackrel{a}{\searrow}\!\!\!\\
P\! &  & \!\!\stackrel{f}{\longrightarrow}\!\! &  & \!Q
\end{array}\right)=H_{1}(A)-H_{1}(Q),
\]
and $\tilde{\omega}_{Q}=U_{\mathrm{MM}}(\omega_{Q})$. The same is
true for $\mathsf{HCondProb}$ in place of $\mathsf{LCondProb}_{\rho}$.
\end{prop}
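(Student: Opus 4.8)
The plan is to mirror the proof of Theorem~\ref{thm:cond_ent} essentially verbatim, isolating the single place where the specific structure of $\mathsf{FinProb}$ entered and replacing it with the corresponding unconditional result for the category at hand. First I would check that the stated $(F,\omega,\tilde{\omega})$ is a DisiMonSiMon functor into $R_{\mathrm{DiM}}(\mathbb{R}_{\ge0})$: since every $P\in\mathsf{LProb}_{\rho}$ with $0\le\rho<1$ (and every $P\in\mathsf{HProb}$) has finite Shannon entropy, and measure-preserving maps do not increase it, the value $F(f)=H_{1}(P)-H_{1}(Q)\ge0$ is well defined and functorial (telescoping along $P\stackrel{f}{\to}Q\stackrel{g}{\to}R$), while $\omega_{Q}$ is the lax-monoidal component given by conditional Shannon entropy, exactly as for $\mathsf{FinProb}$. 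For the universal property, given any DisiMonSiMon functor $(F',\omega',\tilde{\omega}')$ into $R_{\mathrm{DiM}}(\mathsf{W})$ with $\mathsf{W}\in\mathsf{IcOrdCMon}$, I would restrict to the degenerate object $1$, using the isomorphism $O(1)=(\mathsf{LProb}_{\rho}/1,\,-/(\mathsf{LProb}_{\rho}/1))\cong-/\mathsf{LProb}_{\rho}$ (respectively $-/\mathsf{HProb}$) of MonSiMon categories.

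The key divergence from Theorem~\ref{thm:cond_ent} is precisely at this base case. There one invoked Theorem~\ref{thm:finprob_ic}, whose codomain $(H_{0},H_{1})(\mathsf{FinProb})$ is two-dimensional, and then spent the bulk of the argument using a submodularity inequality (built from a concrete distribution on $\{0,1\}^{3}$) together with integral closedness to eliminate the Hartley coordinate. For $0<\rho<1$ I would instead invoke Theorem~\ref{thm:lprob}, and for $\mathsf{HProb}$ Theorem~\ref{thm:finarvect-2-1}; in both cases the unconditional universal $\mathsf{IcOrdCMon}$-entropy already has codomain $\mathbb{R}_{\ge0}$ with no Hartley component. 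Hence $\omega'_{1}$ factors directly as $O(1)\stackrel{(!,h_{1})}{\longrightarrow}R(\mathbb{R}_{\ge0})\stackrel{R(t')}{\longrightarrow}R(\mathsf{W})$ for a unique order-preserving homomorphism $t':\mathbb{R}_{\ge0}\to\mathsf{W}$, so that $H(Q):=h'_{1,!}(\mathrm{id}_{Q})=t'(H_{1}(Q))$ holds from the outset and the entire submodularity argument becomes unnecessary. The only case still requiring that argument is $\rho=0$, where $\mathsf{LCondProb}_{0}=\mathsf{FinCondProb}$ and Theorem~\ref{thm:cond_ent} applies as it stands.

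Finally I would feed this into the category-agnostic chain-rule identity. The derivation of \eqref{eq:chain_finprob} used only the DisiMonSiMon-functor axioms, so it holds here; applied to $P\stackrel{f}{\to}Q\stackrel{!}{\to}1$ it yields $h'_{Q,f}(\mathrm{id}_{P})+t'(H_{1}(Q))=t'(H_{1}(P))$. Since $H_{1}(P)=(H_{1}(P)-H_{1}(Q))+H_{1}(Q)$ as nonnegative reals and $t'$ is a monoid homomorphism, the cancellativity of $\mathsf{W}$ (automatic for integrally closed monoids) gives $h'_{Q,f}(\mathrm{id}_{P})=t'(H_{1}(P)-H_{1}(Q))$, and invariance under extension of probability space \eqref{eq:invar_extend} then extends this to an arbitrary object of $f/(\mathsf{LProb}_{\rho}/Q)$, namely $h'_{Q,f}(P\stackrel{x}{\to}A\stackrel{a}{\to}Q)=t'(H_{1}(A)-H_{1}(Q))$; uniqueness of $t'$ follows because $H_{1}$ already attains every value in $\mathbb{R}_{\ge0}$ on finite distributions. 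The main obstacle is therefore not an analytic or combinatorial one in these cases at all, but a bookkeeping one: confirming that the structural setup of $\mathsf{LCondProb}_{\rho}$ and $\mathsf{HCondProb}$ (conditional products remaining inside $\mathsf{LProb}_{\rho}$ or $\mathsf{HProb}$, and the relevant over/under categories being genuine MonSiMon categories) parallels $\mathsf{FinCondProb}$ closely enough that every step quoted from Theorem~\ref{thm:cond_ent} is literally available.
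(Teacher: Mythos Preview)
Your proposal is correct and matches the paper's own treatment, which simply states that the proof is the same as Theorem~\ref{thm:cond_ent} and omits it. Your observation that for $0<\rho<1$ and for $\mathsf{HProb}$ one should invoke Theorem~\ref{thm:lprob} (resp.\ Theorem~\ref{thm:finarvect-2-1}) at the base object $1$, so that the codomain is already $\mathbb{R}_{\ge0}$ and the submodularity construction on $\{0,1\}^{3}$ becomes superfluous, is exactly the natural reading of ``the same proof'' once the appropriate unconditional result is substituted for Theorem~\ref{thm:finprob_ic}; this is a genuine streamlining rather than a different route. Your flagged bookkeeping concern about conditional products remaining in the ambient category is the one point neither you nor the paper actually verifies; it is immediate for $\mathsf{HProb}$ (by additivity of conditional Shannon entropy) but deserves a moment's care for $\mathsf{LProb}_{\rho}$.
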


\medskip{}

\section{Conclusion and Discussions}

In this paper, we studied the notion of monoidal strictly-indexed
monoidal (MonSiMon) categories that captures the notions of product
distributions and joint random variables. The universal entropy of
a MonSiMon category is then defined as the universal MonSiMon functor
to some reflective subcategory of $\mathsf{MonSiMonCat}$ (the category
of MonSiMon categories) such as $\mathsf{IcOrdCMon}$ and $\mathsf{IcOrdAb}$,
which can also be given as a monoidal natural transformation satisfying
a universal property. For example, the pairing of the Hartley and
Shannon entropies is the univeral entropy $\mathsf{FinProb}$, and
the Shannon entropy is the univeral entropy $\mathsf{LProb}_{\rho}$
($0<\rho<1$). We also introduced the discretely-indexed monoidal
strictly-indexed monoidal (DisiMonSiMon) categories, where ``over
categories'' are MonSiMon categories, to capture the notion of conditional
product distributions. We show that the conditional Shannon entropy
is the universal conditional entropy of $\mathsf{FinProb}$ as a DisiMonSiMon
category. Moreover, the definition of universal entropy allows us
to link the universal entropies of different MonSiMon categories together
in a natural manner.

Although we have shown that the universal $\mathsf{IcOrdCMon}$ and
$\mathsf{IcOrdAb}$-entropy always exists for all MonSiMon category,
finding these entropies is often nontrivial. For future studies, it
may be of interest to find the universal entropies of some categories
not covered in this paper, such as category of finite groups, the
category of finite partial orders, the category of finite simplicial
sets, and the category of finite categories.

Linear entropy inequalities are linear inequalities on the Shannon
entropy over $\mathsf{FinProb}$ \cite{yeung1997framework,zhang1998characterization}.
One potential research direction is to investigate the sets of true
linear inequalities over the universal entropies of different DisiMonSiMon
categories, and how they depend on the category concerned. While some
inequalities are true for every DisiMonSiMon category (e.g., the subadditivity
inequality $H(X,Y)\le H(X)+H(Y)$ which follows from the definition
of universal entropy), it is possible that there are some inequalities
that are specific to $\mathsf{FinProb}$, which are necessarily non-Shannon
inequalities \cite{zhang1998characterization,yeung2012first}. Also,
while conditional linear entropy inequalities over $\mathsf{FinProb}$
are undecidable \cite{li2022undecidabilityaffine,li2023undecidability,kuhne2022entropic},
it is uncertain whether the same is true for other DisiMonSiMon categories.
The category-theoretic framework in this paper, which allows us to
keep track of relations and operations on probability distributions
and random variables, may also be useful for symbolic algorithms for
the automated proof of entropy inequalities \cite{yeung1996itip,yeung1997framework,li2021automatedisit,yeung2021machine}.

We may also investigate whether the universal property, or a modified
version of it, can be applied to von Neumann entropy of quantum states.
An obstacle is the no-broadcast theorem \cite{barnum1996noncommuting},
which forbids us from combining two quantum operations, which is necessary
to discuss the subadditivity of entropy in the sense in this paper.
It would be interesting to study whether we can use notions such as
graded monoidal categories \cite{parzygnat2020inverses} to modify
the construction of the DisiMonSiMon category in this paper to accommodate
quantum states.

We may also study whether the differential entropy satisfies a universal
property. The differential entropy lacks a number of important properties
that the discrete entropy satisfies. For example, differential entropy
is not invariant under bijective transformations. Therefore, it is
not immediately clear how a MonSiMon category of continuous probability
distributions can be defined so that the differential entropy is given
by a MonSiMon functor.

\medskip{}

\section{Acknowledgement}

This work was partially supported by two grants from the Research
Grants Council of the Hong Kong Special Administrative Region, China
{[}Project No.s: CUHK 24205621 (ECS), CUHK 14209823 (GRF){]}. The
author would like to thank Tobias Fritz for his invaluable comments.
Most diagrams in this paper are drawn using Quiver \cite{Arkor_quiver_2024}
or Ipe \cite{ipe_2023}.

\medskip{}

\bibliographystyle{IEEEtran}
\bibliography{ref}

\end{document}